\documentclass[11pt, a4paper]{amsart}

\usepackage{amsthm}
\usepackage{mathtools}
\usepackage{amsmath}
\usepackage{amsfonts}
\usepackage{amssymb}
\usepackage[cmtip,all]{xy}
\usepackage{soul}
\usepackage{graphicx}
\usepackage{xcolor}

\usepackage[nobysame, alphabetic, initials, abbrev]{amsrefs}

\newcommand{\pcite}[2][]{{\cite{#2}*{#1}}}
\newcommand{\tcite}[3][]{{#3~\pcite[#1]{#2}}}
\newcommand{\pcites}[2][]{{\cites{#2}}}

\usepackage[
]{hyperref}
\usepackage{cleveref}

\newtheorem{maintheorem}{Theorem}
\newtheorem{maincorollary}[maintheorem]{Corollary}

\newtheorem{theorem}{Theorem}[section]
\newtheorem{lemma}[theorem]{Lemma}

\newtheorem{corollary}[theorem]{Corollary}

\theoremstyle{definition}
\newtheorem{definition}[theorem]{Definition}

\newtheorem{example}[theorem]{Example}

\theoremstyle{remark}
\newtheorem{remark}[theorem]{Remark}

\numberwithin{equation}{section}

\crefname{maintheorem}{theorem}{theorems}
\crefname{maincorollary}{corollary}{corollaries}
\crefname{theorem}{theorem}{theorems}
\crefname{lemma}{lemma}{lemmas}
\crefname{proposition}{proposition}{propositions}
\crefname{corollary}{corollary}{corollaries}
\crefname{definition}{definition}{definitions}
\crefname{example}{example}{examples}
\crefname{exercise}{exercise}{exercises}
\crefname{remark}{remark}{remarks}

\DeclareMathOperator{\Recp}{{Rec^{+}}}
\DeclareMathOperator{\id}{id}
\DeclareMathOperator{\Diam}{Diam}
\DeclareMathOperator{\degree}{deg}

\DeclarePairedDelimiter{\opclint}{(}{]}
\DeclarePairedDelimiter{\clopint}{[}{)}

\definecolor{jian}{rgb}{0.8,0,0}
\definecolor{chang}{rgb}{0,0.8,0}
\definecolor{zhou}{rgb}{0,0,0.8}

\begin{document}

\title[Quantitative rigidity of pseudo-rotations]{Quantitative Rigidity of pseudo-rotations on the two-torus and Sarnak's conjecture}

\date{August 27, 2026}

\author{Yinshan Chang}
\address{College of Mathematics, Sichuan University, Chengdu 610064, PR China}
\email{ychang@scu.edu.cn}

\author{Jian Wang}
\address{School of Mathematical Sciences and LPMC, Nankai University, Tianjin 300071, PR China}
\email{wangjian@nankai.edu.cn}

\author{Junchang Zhou}
\address{School of Mathematical Sciences and LPMC, Nankai University, Tianjin 300071, PR China}
\email{{\hypersetup{hidelinks}\href{mailto:2120240070@mail.nankai.edu.cn}{\nolinkurl{2120240070@mail.nankai.edu.cn}}}}

\keywords{\(C^0\)-rigidity, pseudo-rotations, M\"obius disjointness, skew products, rotation vectors, quantitative-deviation}

\subjclass[2020]{37E45, 37A44}
\begin{abstract}
    We establish quantitative rigidity results for pseudo-rotations of the two-torus under a \((C,\delta)\)-deviation condition relative to their rotation vectors. The main ingredient is a quantitative free-disk estimate that converts bounds on orbit deviation into explicit control of the distance between the iterates and the identity map.
    Under such a \((C,\delta)\)-deviation condition, we show that H\"older continuous super-Liouvillean irrational pseudo-rotations are \(C^0\)-rigid with an exponential decay rate and that \(C^k\) semi-irrational pseudo-rotations of strong non-Brjuno type exhibit \(C^{k-1}\)-rigidity with a superpolynomial decay rate. Moreover, under this deviation condition and sufficiently large irrationality measure, we show that H\"older continuous skew products on \(\mathbb{T}^2\) over circle rotations are \(C^0\)-rigid with a polynomial decay rate. As a consequence, all these classes satisfy Sarnak's conjecture.
\end{abstract}

\maketitle

\section{Introduction}\label{sec:intro}

\subsection{From rotation vectors to rigidity}\label{sec:intro:vtor}

In 1885, H. Poincar\'e introduced the definition of rotation number and later proved a celebrated classification of circle homeomorphisms: an orientation-preserving circle homeomorphism \(f\) is semi-conjugate to an irrational rigid rotation if and only if the rotation number of \(f\) is irrational. Since then, the linearization problem, namely when can we relate a given system to a linear model, has been extensively studied.

It seems natural to generalize the rotation number to higher-dimensional tori. However, even on \(\mathbb{T}^2\), the situation is too complicated to obtain a complete classification similar to the circle case. First, the rotation vector (the notation ``rotation number'' becomes a two-dimensional rotation vector in this case) at different points may differ, so in general one can define only a rotation set for a homeomorphism. Second, even if the rotation set reduces to a single point (in which case we call the map a pseudo-rotation), the dynamics can be very different from rigid rotation. For example, it can exhibit weak mixing \pcite{FayadWeak2005}.

However, under a condition called bounded mean motion, \tcite{JagerLinearization2009}{J{\"a}ger} proved an analogous result to Poincar\'e's classification for conservative pseudo-rotations on \(\mathbb{T}^2\). In 2018, under bounded mean motion and an arithmetic condition, \tcite{WangRigidity2018}{Wang and Zhang} proved that a H\"older continuous area-preserving pseudo-rotation \(f\) is \(C^0\)-rigid\footnote{We say a map \(f\) is \(C^r\)-rigid for \(0 \le r \le \infty\) if there exists a subsequence \(\{n_j\}_j\subset\mathbb{N}\) such that \(d_{C^r}(f^{n_j},\id)\to 0\) as \(j\to\infty\), where \(d_{C^{\infty}}\) is a fixed metric induced by the \(C^k\)-seminorms for \(k\in\mathbb{N}\).}.

In our paper, we introduce a quantitative version of bounded mean motion. We follow the argument of \pcite{WangRigidity2018} using this new condition and generalize the \(C^0\)-rigidity results in \pcite{WangRigidity2018}. To state our results properly, we need the following definitions.

Let \(f\) be a homeomorphism of \(\mathbb{T}^2\) that is isotopic to the identity and \(\tilde{f}\) a lift of \(f\) to the universal covering space \(\mathbb{R}^2\). Namely \(\tilde{f}\) satisfies \(\pi\circ\tilde{f} = f\circ\pi\), where \(\pi:\mathbb{R}^2\to\mathbb{T}^2\) is the covering projection. Define
\[
    \rho(\tilde{f}, z) = \lim_{n\to\infty}\frac{\tilde{f}^{n}(\tilde{z}) - \tilde{z}}{n},
\]
where \(\tilde{z}\) is a preimage of \(z\) under the projection \(\pi\). When this limit exists, we say that \(\rho(\tilde{f}, z)\) is a rotation vector for \(z\). It is easy to verify that whenever this limit exists, it is independent of the choice of the lift \(\tilde{z}\) of \(z\).

Considering all points together, we obtain the following rotation set. The pointwise rotation set is defined by
\[
    \rho_p(\tilde{f}) = \{\rho(\tilde{f}, z):z\in\mathbb{T}^2\text{, the limit of }\rho(\tilde{f}, z)\text{ exists}\}.
\]

In 1989, \tcite{MisiurewiczRotation1989}{Misiurewicz and Ziemian} introduced the following rotation set with better properties, which is the standard definition of the rotation set now:
\[
    \rho(\tilde{f}) = \left\{v\in\mathbb{R}^2:\begin{aligned}
         & \frac{\tilde{f}^{n_i}(\tilde{z_i}) - \tilde{z_i}}{n_i}\to v \text{ for some } \{\tilde{z}_i\}\text{ in }\mathbb{R}^2 \\
         & \text{and }n_i\to\infty\text{ as }i\to\infty\end{aligned}\right\}.
\]

It is obvious that \(\rho(\tilde{f})\) only translates by an element in \(\mathbb{Z}^2\) after changing the lift of \(f\) and conversely, each \(\tilde{f} + k\) is a lift of \(f\) for all \(k\in\mathbb{Z}^2\).

In \pcite{MisiurewiczRotation1989}, the authors proved that \(\rho(\tilde{f})\) is a compact convex subset of \(\mathbb{R}^2\). Therefore, \(\rho(\tilde{f})\) is either a singleton, a line segment, or a convex set with a non-empty interior. When \(\rho(\tilde{f})\) is a singleton, we say that \(f\) is a pseudo-rotation. For the convenience of the discussion, when \(f\) is a pseudo-rotation, we write the set \(\rho(\tilde{f})\) (resp. \(\rho(f)\)) as a vector \(\vec{\omega}\in\mathbb{R}^2\) (resp. \(\bar{\omega}\in\mathbb{T}^2\)) instead of \(\{\vec{\omega}\}\) (resp. \(\{\bar{\omega}\}\)). In this case, one can see that \(\rho(\tilde{f}, z)\)  = \(\rho(\tilde{f})\) for all \(z\in \mathbb{T}^2\) and we write \(\rho(f) = \rho(\tilde{f}) \mod \mathbb{Z}^2\). We say \(f\) is an irrational pseudo-rotation if \(\rho(\tilde{f})\in\mathbb{R}^2\setminus\mathbb{Q}^2\).

We recall the definition of bounded mean motion: Let \(f\) be a pseudo-rotation on \(\mathbb{T}^2\) that is isotopic to the identity. If there exists a lift \(\tilde{f}\) of \(f\) such that for any \(x\in\mathbb{R}^2\) and \(n\in\mathbb{N}\),
\[
    \left|\tilde{f}^{n}(x)-x-n\rho(\tilde{f})\right|\le\kappa\text{ for some }\kappa>0,
\]
then we say \(f\) has bounded mean motion.

Inspired by this definition, we introduce the following definition of deviation property, which can be viewed as a quantitative version of bounded mean motion.

Let \(f\) be a pseudo-rotation on \(\mathbb{T}^2\) which is isotopic to the identity. Fix a lift \(\tilde{f}\) of \(f\). Then we can define the deviation of \(f\) for all \(z\in\mathbb{T}^2\):
\[
    \Delta_n(f, z)=\left|\frac{\tilde{f}^n(\tilde{z}) - \tilde{z}}{n} - \rho(\tilde{f})\right|,
\]
where \(\tilde{z}\) is a preimage of \(z\) under \(\pi\). One can see that the definition is independent of the choice of lifts.

We define the deviation \(\Delta_{N}(f) = \sup_{n \geq N} \max_{z \in \mathbb{T}^{2}} \Delta_n(f, z)\) to record the uniform convergence of \(\Delta_n\) for all \(z\in\mathbb{T}^2\). Let us state our quantitative condition:

\begin{definition}[{\((C,\delta)\)-deviation}]
    Let \(f\) be a pseudo-rotation on \(\mathbb{T}^2\) which is isotopic to the identity.
    We say that \(f\) has \((C,\delta)\)-deviation for some \(C < \infty\) and \(\delta \in \left[0, 1\right)\) if \(\Delta_{N}(f) \leq C \cdot N^{\delta - 1}\) for all \(N\in\mathbb{N}\).
\end{definition}

In particular, \(f\) has \((C, 0)\)-deviation for some \(C>0\) if and only if \(f\) has bounded mean motion (which is used in  \pcite{WangRigidity2018}) up to a change of constants.

\begin{remark}
    In \pcite[Section 2.2]{WangRigidity2018}, the authors introduced a weaker condition than bounded mean motion, {\it bounded deviation parallel to some nonzero vector}, which deals mainly with the semi-irrational pseudo-rotation case, and established rigidity under it (see \pcite[Theorem 1.(2) and Theorem 2.(2)]{WangRigidity2018}). In our current work, we cannot extend this case to the corresponding \((C,\delta)\) framework, because our generalization method does not yield the necessary quantitative estimates. However, both conditions are weaker versions of bounded mean motion, and in the semi-irrational case, they are mutually incomparable.
\end{remark}

Let us now define the decay rate of \(C^0\)-rigidity. We say that a homeomorphism \(f\) is {\it \(C^0\)-rigid with polynomial rate} if there exists some \(\eta>0\) and a sequence \(\{n_j\}\subset\mathbb{N}\) such that \(d_{C^0}(f^{n_j}, \id) \le  C n_j^{-\eta}\) for some positive constant \(C\). Similarly, we say that \(f\) is {\it \(C^0\)-rigid with exponential rate} (resp. {\it with superpolynomial rate}) if \(d_{C^0}(f^{n_j}, \id) \le C_1 e^{-C_2 n_j}\) for some \(C_1, C_2>0\) (resp.  if \(d_{C^0}(f^{n_j}, \id) = o(n_j^{-\delta})\) for any \(\delta>0\)).

Using \((C,\delta)\)-deviation, which generalizes the bounded mean motion from \pcite{WangRigidity2018}, we obtain the following rigidity results.

\begin{maintheorem}\label{thm:rigid-pseudo}
    Suppose \(f\) is an area-preserving irrational pseudo-rotation on \(\mathbb{T}^2\) which is H\"older with exponent \(a\in\left(0,1\right]\) and \(\rho(f) = \overline{\omega}\) is irrational satisfying the strong super-Liouvillean condition:
    \[
        \liminf_{n\to\infty} n^{-1}a^{n}\ln\lVert n\overline{\omega}\rVert_{\mathbb{T}^2} = -\infty,
    \]
    and suppose \(f\) has \((C, \delta)\)-deviation for some \(C>0\) and \(\delta\in\left[0, \frac{1}{2}\right)\), then \(f\) is \(C^0\)-rigid with an exponential decay rate.
\end{maintheorem}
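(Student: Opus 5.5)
The plan follows the Wang--Zhang scheme, made quantitative. The key tool is a free-disk estimate: if $q\bar\omega$ is very close to $0$ in $\mathbb{T}^2$, the lift $\tilde f^q - p$ (with $p\in\mathbb{Z}^2$ the integer vector nearest $q\vec\omega$) should move every point by a small amount. The closeness is controlled by the deviation over $q$ iterates plus the arithmetic error $\lVert q\bar\omega\rVert$.

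First, fix $n$ with $\lVert n\bar\omega\rVert$ extremely small, as supplied by the strong super-Liouvillean condition. The $(C,\delta)$-deviation gives, for every $m\ge 1$ and every $x$,
\[
    \bigl|\tilde f^{mn}(x)-x-mn\vec\omega\bigr|\le C (mn)^{\delta}.
\]
Set $g=\tilde f^{n}-p$. Then $g$ is an area-preserving lift with rotation vector $n\vec\omega-p$, of size $\varepsilon_n:=\lVert n\bar\omega\rVert$, and for $m\le M$ its orbits satisfy $|g^m(x)-x|\le C(mn)^\delta+m\varepsilon_n$.

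Second, the free-disk step. Suppose some point $x$ has $|g(x)-x|=r$. Since $f$ is H\"older, $f^n$ is H\"older with exponent $a^n$ and constant roughly $L^{1+a+\dots}$. Hence on a disk $D$ of radius $s\approx (r/(2L_n))^{1/a^n}$ around $x$, $g$ displaces all points by about $r$, so $D\cap g(D)=\emptyset$.

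The quantitative free-disk estimate I would prove next is the main ingredient. For an area-preserving lift of a torus map whose displacement stays bounded by $R_M:=C(Mn)^\delta+M\varepsilon_n$ for $m\le M$, any free disk of area $\pi s^2$ must have iterates that are pairwise disjoint, by Brouwer/Franks lemma arguments in the spirit of Wang--Zhang. Those $M$ iterates all stay in a ball of radius about $R_M$ projected to a bounded region. Area preservation then forces
\[
    M\pi s^2\lesssim R_M^2,
\]
which is a contradiction once $M\gg R_M^2/s^2$. With $\delta<\tfrac12$, $R_M^2\approx C^2(Mn)^{2\delta}$ grows sublinearly in $M$, provided $M\varepsilon_n$ stays bounded. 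So we choose $M\approx (n^{2\delta}s^{-2})^{1/(1-2\delta)}$ and need $M\varepsilon_n\le 1$. This is exactly where $\delta<\tfrac12$ enters.

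Third, combine the steps. The contradiction says every $r$ with $\varepsilon_n\lesssim (r/L_n)^{2/((1-2\delta)a^n)}\,n^{-2\delta/(1-2\delta)}$ is impossible. Therefore
\[
    d_{C^0}(f^n,\id)\le r_n\approx L_n\bigl(\varepsilon_n\, n^{c}\bigr)^{(1-2\delta)a^n/2}.
\]
Taking logarithms gives $\ln r_n\lesssim \ln L_n+\tfrac{(1-2\delta)}{2}a^n\ln\varepsilon_n + O(a^n\ln n)$. Since $\ln L_n=O(n)$ and $a^n\ln\varepsilon_n\le -K n$ for arbitrarily large $K$ along a subsequence, we get $\ln r_n\le -c n$. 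That is $C^0$-rigidity with exponential rate along that subsequence.

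The main obstacle is the disjointness of the iterates $g^m(D)$ in the free-disk step. This needs a Franks-type lemma showing that a free disk of an area-preserving lift with a bounded chain of returns cannot recur. It must be applied on $\mathbb{R}^2$ modulo the deck translations, since $g$ is a lift and the projected iterates could overlap translates. I would first try to reproduce the Wang--Zhang argument: if $g^i(D)\cap (g^j(D)+k)\neq\emptyset$, then a periodic disk chain yields a fixed point of some $g^{j-i}-k$. That fixed point gives a rotation vector $k/(j-i)\neq n\vec\omega-p$, contradicting the pseudo-rotation hypothesis when $\vec\omega$ is irrational and $k\neq0$. The case $k=0$ reduces to the free-disk property directly.
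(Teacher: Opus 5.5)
\textbf{Verdict: correct, by a genuinely different and shorter route, but drop your last paragraph.} The difference is in the free-disk area bound.

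\emph{The paper's route.} It proves \Cref{lem:freedisk} through the first-return map to $D$:
\begin{itemize}
\item Kac's lemma reduces the area bound to a lower bound on average return times.
\item The integer displacements $l_D$ of successive returns must have pairwise distinct partial sums; otherwise the translated disks $\tilde D+\sum l_D$ form a periodic free disk chain and Franks' lemma gives a fixed point.
\item A lattice-point pigeonhole forces these sums to grow like $\sqrt{m}$, and the deviation bound turns that growth into long return times.
\end{itemize}

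\emph{Your route.} You bypass all of this. The map $g=\tilde f^{n}-p$ is an orientation- and area-preserving homeomorphism of $\mathbb{R}^2$ with no fixed points. Brouwer's lemma (the one-disk case $b_1=b_2=\tilde D$ of Franks' lemma) makes the images $g^m(\tilde D)$, $0\le m<M$, pairwise disjoint. They all lie in the Euclidean ball $B(x,s+R_M)$, so $Ms^2\le (s+R_M)^2$.

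\emph{What each buys.} With $M\approx\varepsilon_n^{-1}$ you get only $s^2\lesssim (2+Cn^{\delta})^2\varepsilon_n^{1-2\delta}$. This is weaker than the paper's exponent $\frac{1-2\delta}{1-\delta}$, but harmless: after multiplying by $a^n$, the strong super-Liouvillean condition beats any fixed positive power. If you instead balance $M\varepsilon_n$ against $\sqrt{M}\,s$, taking $M\approx s^2\varepsilon_n^{-2}$, the same packing recovers the exponent $\frac{1-2\delta}{1-\delta}$, with a constant of order $(Cn^\delta+1)^{\frac{1}{1-\delta}}$. So your argument is shorter and at least as sharp here. The paper's version stays inside the Wang--Zhang first-return framework, which it then adapts in \Cref{lem:freedisk-skew}.

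\emph{Your last paragraph.} Disjointness modulo deck translations is neither needed nor true.
\begin{itemize}
\item The packing takes place in the ball $B(x,s+R_M)\subset\mathbb{R}^2$; nothing is projected, so only $k=0$ matters.
\item For $k\neq 0$ the claim fails in your own regime $M\gg s^{-2}$. Pairwise disjointness of the projections $f^{nm}(\pi(\tilde D))$, $0\le m<M$, would give total area $M\pi s^2>1$ on $\mathbb{T}^2$.
\item Your chain argument breaks down anyway. If $(g^{j-i}-k)(\tilde D)\cap\tilde D\neq\emptyset$, then $\tilde D$ is not a free disk for $g^{j-i}-k$, so Franks' lemma does not apply to that map.
\item The case $k=0$ with $j-i\ge 2$ does not follow ``directly'' from the free-disk property. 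It is exactly Brouwer's lemma, used together with the fact that an irrational pseudo-rotation has no periodic points, so $g$ has no fixed point.
\end{itemize}
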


This result is obtained by a topological argument inspired first from \pcite{AvilaMixing2020} for the pseudo-rotations on the disk and then from \pcite{WangRigidity2018} for the pseudo-rotations on the 2-torus. A free disk \(D\subset \mathbb{T}^2\) for a homeomorphism \(f\) is a topological disk such that \(f(D)\cap D=\emptyset\). We can see the \(C^0\) distance between \(f\) and identity can be controlled by the uniform upper bound of the diameters of free disks together with the regularity of \(f\). By using \((C,\delta)\)-deviation above, we can control the maximal area of a free disk by the length of the rotation vector. Finally, with the super-Liouvillean condition, the length of the rotation vector after iterations can be very small. Then we obtain this result.

Let us consider a special class of rotation vectors following \pcite{WangRigidity2018}. We say an irrational vector \(\vec{\omega}=(\omega_1, \omega_2)\in\mathbb{R}^2\setminus\mathbb{Q}^2\) is semi-irrational if there exist \(c,d,e\in\mathbb{Z}\) such that \(c\omega_1+d\omega_2+e=0\). In this case, we also say that \(\overline{\omega}=\vec{\omega}\mod\mathbb{Z}^2\) is semi-irrational since this property is invariant under integer translation.

When the rotation vector is semi-irrational, the arithmetic condition can be weakened. More precisely, we have the following result:

\begin{maintheorem}\label{thm:rigid-pseudo-semi}
    Let \(f\) be a \(C^k\) semi-irrational area-preserving pseudo-rotation on \(\mathbb{T}^2\), where \(2 \le k < \infty\), such that \(\rho(f) = \overline{\omega}\) and \(\overline{\omega}\) is of strong non-Brjuno type (see \pcite[Definition 3]{WangRigidity2018} or \Cref{sec:rotation_vector} below).

    Suppose \(f\) has \((C, \delta)\)-deviation for some \(C>0\) and \(\delta\in\left[0, \frac{1}{2}\right)\), then \(f\) is \(C^{k-1}\)-rigid with superpolynomial decay rate.
\end{maintheorem}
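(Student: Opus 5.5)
The plan has four steps: reduce to a quasi-periodic case, use the free-disk estimate, interpolate to \(C^{k-1}\), and feed in the arithmetic condition.

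\emph{Step 1: reduction.} Since \(\overline{\omega}\) is semi-irrational, after conjugating by a matrix in \(SL(2,\mathbb{Z})\) (and passing to a finite cover or iterate if necessary) we may assume \(\vec\omega=(\alpha,p/q)\) with \(\alpha\) irrational. Replacing \(f\) by \(g=f^{q}\) with a suitable lift, we get \(\rho(\tilde g)=(q\alpha,0)\) modulo \(\mathbb{Z}^2\). The conjugacy is linear, so it preserves area-preservation and the \(C^k\) class. It changes the \((C,\delta)\)-deviation only by constants depending on the matrix and on \(q\).

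\emph{Step 2: quantitative free-disk estimate.} For an iterate \(g^n\) with lift \(\tilde g^n-(m,0)\), where \(m\) is the nearest integer to \(nq\alpha\), the rotation vector is \((\|nq\alpha\|,0)\), of length \(\varepsilon_n:=\|nq\alpha\|\). I would take a free disk \(D\) of \(g^n\) with area \(A\). Following the argument of \pcite{WangRigidity2018}, area preservation and freeness imply that some orbit \(g^{nj}\), \(j\le J\sim 1/A\), must return and make an integer displacement. By the \((C,\delta)\)-deviation, the displacement after \(nJ\) steps is \(nJ\varepsilon_n+O(C(nJ)^\delta)\). Balancing the two gives an area bound of the form
\[A\lesssim \max\bigl(\varepsilon_n,\; C n^{\delta}\varepsilon_n^{1-\delta}\bigr)^{c}\]
for some exponent \(c>0\). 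The condition \(\delta<\tfrac12\) is exactly what is needed for the deviation term to be beaten by the drift. This yields a diameter bound on free disks of \(g^n\) of order \(n^{\beta}\varepsilon_n^{\gamma}\) for explicit \(\beta,\gamma>0\). Since \(g^n\) is \(C^1\) with \(\|Dg^n\|\le e^{Ln}\), a point moved by distance \(r\) has a free disk of radius \(\sim r e^{-Ln}\). Hence
\[d_{C^0}(g^n,\id)\lesssim e^{Ln}\, n^{\beta}\varepsilon_n^{\gamma}.\]
In the semi-irrational case this exponential factor is too costly. Instead I would use the \(C^k\) structure as in \pcite[Theorem 2]{WangRigidity2018}. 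The direction orthogonal to \(\vec\omega\) has zero rotation, so one works with the lift having rational component \(0\) and restricts growth only along \(\alpha\).

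\emph{Step 3: interpolation to \(C^{k-1}\).} I would bound \(\|g^n\|_{C^k}\le e^{Ln}\) crudely and apply the Kolmogorov–Landau (Hadamard) interpolation inequality:
\[\|g^n-\id\|_{C^{k-1}}\lesssim \|g^n-\id\|_{C^0}^{1/k}\,\|g^n-\id\|_{C^k}^{(k-1)/k}.\]

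\emph{Step 4: arithmetic input.} The strong non-Brjuno condition (see \Cref{sec:rotation_vector}) supplies denominators \(q_j\) of \(q\alpha\) with \(\ln \|q_jq\alpha\|\le -M q_j\) for arbitrarily large \(M\), along a sequence growing superpolynomially in the relevant sense. Choosing \(n_j=q_j\) makes \(\varepsilon_{n_j}^{\gamma/k}\) beat \(e^{Lq_j}\) and every polynomial factor. This gives \(d_{C^{k-1}}(f^{qn_j},\id)=o((qn_j)^{-s})\) for all \(s>0\).

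The main obstacle is Step 2: making the free-disk area bound explicit with constants polynomial in \(n\), and checking that the Lipschitz loss combined with the non-Brjuno decay actually gives a superpolynomial rate, not merely rigidity. I would first try to carry out the counting argument of \pcite{WangRigidity2018}, keeping track of constants with the \((C,\delta)\) bound at each step.
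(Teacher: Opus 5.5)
\textbf{There is a genuine gap, and it lies in Step 4, on which Steps 2 and 3 depend.}

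The non-Brjuno condition $\sum_n \ln(q_{n+1})/q_n=+\infty$ does not give $\ln\|q_j\beta\|_{\mathbb{T}}\le -Mq_j$ for arbitrarily large $M$. That is a much stronger, Liouville-type condition, roughly $\limsup_n \ln(q_{n+1})/q_n=\infty$. For example, take $\ln q_{n+1}\approx q_n/n$. Then $\beta$ is non-Brjuno, yet $\ln\|q\beta\|_{\mathbb{T}}/q\to 0$ along every sequence $q\to\infty$, so not even one fixed $M>0$ is available.

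What the hypothesis actually gives is \Cref{lem:non_brjuno}: $\|q_{n_j}\beta\|_{\mathbb{T}}<e^{-q_{n_j}/j^2}$ for $j$ in an infinite set. The coefficient $1/j^2$ in this exponent tends to zero. Against this decay, your crude bounds $\|Dg^n\|\le e^{Ln}$ (Step 2) and $\|g^n\|_{C^k}\le e^{Ln}$ (Step 3) are fatal, since $e^{Lq_{n_j}}e^{-\gamma q_{n_j}/j^2}\to\infty$. Your suggested remedy is that zero rotation orthogonal to $\vec\omega$ confines growth to the $\alpha$-direction. This gives no bound on $\|Dg^n\|$ at all, because the rotation vector does not control derivatives.

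\textbf{The missing idea is subexponential derivative growth along very sparse times.} This is \Cref{h} (Theorem C of Avila--Fayad--Le Calvez--Xu--Zhang).
\begin{itemize}
\item The map $f'=\overline{A}f^{\ell}\overline{A}^{-1}$ is a pseudo-rotation with irrational rotation vector, so it has no periodic points, hence no hyperbolic ones. Let $\theta,H$ be given by \Cref{h} for $K=\{f'\}$. Then every sequence with $Q_j\ge H^{Q_{j-1}}$ satisfies $\|D(f')^{Q_j}\|\le e^{\theta^jQ_j}\le e^{Q_j/j^3}$ for large $j$.
\item The order of choices matters. You must fix $H$ first and then apply \Cref{lem:non_brjuno} with this $H$. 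This way the same subsequence $Q_j=q_{n_j}$ carries both the derivative bound and the decay $\|Q_j\ell\beta\|_{\mathbb{T}}\le\ell e^{-Q_j/j^2}$.
\item \Cref{cor:freedisk} then gives $d_{C^0}((f')^{Q_j},\id)\ll e^{-Q_j/j^{5/2}}$. The deviation constant here is $\|A\|\ell^{\delta}Q_j^{\delta}C$, which costs only a polynomial factor. Your Step 2 area bound is therefore not the issue, since it is already available as \Cref{lem:freedisk}.
\item For the interpolation you likewise need $\|D^m(f')^{Q_j}\|\le e^{C_mQ_j/j^3}$ for all $m\le k$, not $e^{LQ_j}$. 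The paper proves this by induction on $m$ and $j$ using Fa\`a di Bruno, writing $Q_j=a_jQ_{j-1}+r_j$ and using $Q_{j-1}+\ln Q_j=o(Q_j/j^3)$.
\item Only then does interpolation give $d_{C^{k-1}}\ll e^{-c'_kQ_j/j^{5/2}}$. This is superpolynomial because the tower growth of $Q_j$ gives $j^{5/2}\le Q_j^{1/2}$.
\end{itemize}
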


Finally, we consider the skew product \(T_{\alpha, h}: \mathbb{T}^2 \to \mathbb{T}^2\) defined by
\[
    T_{\alpha, h}(x,y)= (x + \alpha, y + h(x)),
\]
where  \(\alpha\) is an irrational number and \(h:\mathbb{T}\to \mathbb{T}\) is a continuous function.

Recall that the degree of a continuous map \(h:\mathbb{T}\to\mathbb{T}\), denoted by \(\deg(h)\), is defined by the induced homomorphism on the fundamental group, \( h_*:\pi_1(\mathbb{T})\to\pi_1(\mathbb{T}) \),
namely \(\deg(h)=h_*(1)\), where \(1\) denotes the canonical generator of
\(\pi_1(\mathbb{T})\cong\mathbb{Z}\). For skew product \(T_{\alpha, h}\), it is easy to see that \(T_{\alpha,h}\) is isotopic to identity if and only if \(\degree(h)=0\).

Since the orbit of a skew product only grows linearly and the map on the first coordinate is just a rotation, there are more admissible rotation vectors, and we can extend \(\delta\) to \(\clopint{0, 1}\). To precisely state them, we first recall that for \(\alpha\in\mathbb{R}\), the irrationality measure \(\mu(\alpha)\) is defined as:

\[
    \mu(\alpha)=
    \sup\left\{r>0:
    \begin{aligned}
         & \left|\alpha-\frac pq\right|<q^{-r}\text{ for infinitely many} \\
         & \text{coprime }(p,q)\in\mathbb{Z}\times\mathbb{N}^{+}\end{aligned}\right\}.
\]

We can now state our rigidity result for skew products.

\begin{maintheorem}\label{thm:rigid-skew}
    Let \(C>0, \delta\in\left[0, 1\right)\). Suppose that \(h\) is H\"older continuous with exponent \(a \in \left(0,1\right]\), \(\degree(h)=0\), \(\mu(\alpha) > 1 + \frac{1+a\delta}{a(1-\delta)}\) and \(T_{\alpha,h}\) has \((C,\delta)\)-deviation.
    Then \(T_{\alpha,h}\) is \(C^0\)-rigid with polynomial decay rate.
\end{maintheorem}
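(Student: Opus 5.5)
The plan is to exploit the explicit form of the iterates,
\[
T_{\alpha,h}^n(x,y) = \bigl(x+n\alpha,\; y+S_nh(x)\bigr), \qquad S_nh(x)=\sum_{i=0}^{n-1}h(x+i\alpha),
\]
where \(h\) is viewed through a lift \(\tilde h:\mathbb{T}\to\mathbb{R}\); this lift exists because \(\deg(h)=0\). Then
\[
d_{C^0}(T^n_{\alpha,h},\id)\le \lVert n\alpha\rVert_{\mathbb{T}} + \max_x \lVert S_n\tilde h(x)\rVert_{\mathbb{T}}.
\]
The rotation vector of the lift \(\tilde T(x,y)=(x+\alpha,y+\tilde h(x))\) is \((\alpha,\beta)\) with \(\beta=\int\tilde h\), by unique ergodicity of the rotation. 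The \((C,\delta)\)-deviation hypothesis gives
\[
|S_n\tilde h(x)-n\beta|\le C n^{\delta}
\]
for all \(x\) and \(n\). It is not enough, however, to choose \(n=q\) a good denominator of \(\alpha\): we also need \(\lVert q\beta\rVert\) small, and nothing is assumed about \(\beta\). So I will use an oscillation argument instead of controlling \(S_q\tilde h\) itself.

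\textbf{Step 1: the oscillation of \(S_q\tilde h\) is small.} Take \(q\) a denominator with \(|\alpha-p/q|<q^{-r}\), where \(\mu(\alpha)>r>1+\frac{1+a\delta}{a(1-\delta)}\). Write \(S_{mq}=\sum_{j=0}^{m-1}S_q\tilde h\circ R_\alpha^{jq}\). Hölder continuity gives
\[
|S_q\tilde h(x+jq\alpha)-S_q\tilde h(x)|\le q\,|h|_a\,(jq^{1-r})^a .
\]
Summing over \(j<m\),
\[
S_{mq}\tilde h(x)=m\,S_q\tilde h(x)+O\bigl(q^{1+a(1-r)}m^{1+a}\bigr).
\]
Combining this with the deviation bound at time \(mq\),
\[
\bigl|S_q\tilde h(x)-q\beta\bigr|\le C(mq)^{\delta}m^{-1}+O\bigl(q^{1+a(1-r)}m^{a}\bigr).
\]
Since \(\delta<1\), the first term decays in \(m\), so we optimize over \(m\). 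In fact we only need the oscillation \(\operatorname{osc}_x S_q\tilde h\), and the estimate above bounds it by twice the right-hand side. Balancing \(q^\delta m^{\delta-1}\) against \(q^{1+a-ar}m^a\) gives \(m\approx q^{(ar-1-a+\delta)/(a+1-\delta)}\), and the resulting bound is a negative power of \(q\) exactly when \(r\) exceeds a threshold of the stated form \(1+\frac{1+a\delta}{a(1-\delta)}\). I expect this arithmetic to be the main obstacle: the bookkeeping has to reproduce exactly that threshold. If it gives a different exponent, I will instead choose \(m\) so that \(mq\lVert q\alpha\rVert\lesssim 1\) and redo the balancing.

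\textbf{Step 2: use a second iteration to kill the mean \(q\beta\).} Now \(S_q\tilde h(x)=c_q+\epsilon_q(x)\) with \(\lvert\epsilon_q\rvert\le q^{-\eta}\) and \(c_q\) a constant. I will pick an integer \(\ell\le q^{\eta/2}\) with \(\lVert \ell c_q\rVert\le q^{-\eta/2}\), which exists by Dirichlet's theorem. Set \(n=\ell q\). Then
\[
S_n\tilde h=\ell c_q+O\bigl(\ell q^{-\eta}\bigr)+O\bigl(\ell^{1+a}q^{1+a(1-r)}\bigr),
\]
using the same Hölder shift estimate as in Step 1. We also have \(\lVert n\alpha\rVert\le \ell q^{1-r}\). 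All error terms are \(\le q^{-\eta'}\) for some \(\eta'>0\), provided \(\eta\) is taken smaller than the exponent margin from Step 1.

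\textbf{Step 3: conclusion.} Since \(n\le q^{1+\eta/2}\), the estimates above give
\[
d_{C^0}(T^{n},\id)\lesssim q^{-\eta'}\lesssim n^{-\eta''}
\]
for some \(\eta''>0\). Because \(q\) runs through infinitely many denominators, this produces a sequence \(n_j\to\infty\) along which the distance decays, which is \(C^0\)-rigidity with polynomial rate.
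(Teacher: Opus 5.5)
Your argument is correct, and it takes a genuinely different route from the paper's.

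\textbf{The arithmetic in Step 1.} The computation you flagged as the main obstacle works out, and with room to spare. Set $\theta=\lVert q\alpha\rVert_{\mathbb{T}}<q^{1-r}$ and choose $m$ with $m^{1+a-\delta}\asymp q^{\delta-1}\theta^{-a}$. Then both terms of your Step 1 bound are $\asymp q^{\frac{1-\delta+a\delta}{1+a-\delta}}\theta^{\frac{a(1-\delta)}{1+a-\delta}}$. This is a negative power of $q$ as soon as $r>1+\frac{1-\delta+a\delta}{a(1-\delta)}$. That condition is implied by the hypothesis $\mu(\alpha)>1+\frac{1+a\delta}{a(1-\delta)}$, and is strictly weaker when $\delta>0$. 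So you do not need to reproduce the stated threshold, and the fallback is unnecessary.

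\textbf{A simplification in Step 2.} Step 1 bounds $\sup_x\lvert S_q\tilde h(x)-q\beta\rvert$ itself, not just the oscillation. You can therefore take $c_q=q\beta$ and use $\lvert\epsilon_q\rvert\le q^{-\eta}$ uniformly. The extra H\"older-shift error in Step 2 then becomes superfluous. It is harmless anyway, since $r>1+1/a$.

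\textbf{The paper's route.} The paper argues topologically through a free-disk lemma for skew products. Franks' lemma on periodic free disk chains forces the partial sums of the lifted return displacements to be pairwise distinct, hence large. Combined with the deviation bound and Kac's lemma, this gives $\lambda(D)\le c\,\omega^{1-\delta}$ for every small free disk, with $c\ll C+1$. The paper applies this to $T_{\alpha,h}^{q}$, which has $(Cq^{\delta},\delta)$-deviation and fibre map with H\"older constant $qM$. Since a $u\times v$ rectangle whose area exceeds that bound cannot be free, $d_{C^0}(T_{\alpha,h}^{q},\id)\le 2u+2v+qMu^{a}$. Optimizing gives $q^{\frac{1+a\delta}{1+a}}\lVert q\alpha\rVert_{\mathbb{T}}^{\frac{a(1-\delta)}{1+a}}$ in the zero-mean case. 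A nonzero mean is then removed by exactly your Dirichlet step.

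\textbf{What each route buys.} The paper's argument is the same free-disk mechanism that drives Theorems 1 and 2. It uses only area preservation, the deviation bound and the length of the rotation vector, so skew products sit inside one unified framework. Your argument instead exploits the explicit cocycle identity $S_{mq}\tilde h=\sum_{j<m}S_q\tilde h(\cdot+jq\alpha)$ and avoids Brouwer theory and return-time analysis entirely. For $\delta>0$ it gives a sharper bound at time $q$ and a lower arithmetic threshold; the two coincide at $\delta=0$. The price is that it depends on the skew-product structure and does not carry over to general pseudo-rotations.
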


For every \(\alpha\) with \(\mu(\alpha) > 1+\frac{1+a\delta}{a(1-\delta)}\), \(a\in\opclint{0,1}\), \(\delta\in\clopint{0,1}\) and \(C>0\), we can construct a continuum family of skew products that satisfies assumptions of \Cref{thm:rigid-skew}, and each of these skew products is not \(a'\)-H\"older continuous for any \(a'>a\) and does not satisfy \((C', \delta')\)-condition for any \(C'>0\) and \(0\le\delta'<\delta\) when \(\delta>0\) (see \Cref{app:exist}).

On the other hand, \tcite[Section 5]{DeFaveriMobius2022}{De Faveri} showed that for each irrational \(\alpha\), there exists a H\"older (in fact, \(C^1\)) skew product that is not \(C^0\)(or \(L^2\))-rigid with a polynomial decay rate along any subsequence of the convergent denominators of \(\alpha\). Besides, in our supplementary note \pcite{CWZLacunary}, we proved that for each irrational \(\alpha\), there exists \((1-\epsilon)\)-H\"{o}lder continuous skew product that is not \(C^0\)-rigid along any sequence. This indicates that the \((C,\delta)\)-deviation condition in \Cref{thm:rigid-skew} cannot be removed without additional assumptions.

The examples above are all skew products over irrational rotations. A natural question is how many general pseudo-rotations satisfy the conditions in \Cref{thm:rigid-pseudo} or \Cref{thm:rigid-pseudo-semi} without having bounded mean motion.
To the best of our knowledge, no explicit construction appears in the existing literature. The Anosov--Katok
approximation-by-conjugation method and the special-flow method are two
classical and widely used mechanisms for constructing examples in
dynamics \pcites{AK70,FK04,Koc02}. In a supplementary note, we employ both
methods to construct examples that satisfy the $(C,\delta)$-deviation condition without bounded mean motion. More precisely, for every
$0<\delta<\tfrac12$, each method produces continuum many $C^\infty$
Lebesgue-area-preserving weakly mixing pseudo-rotations whose lifted displacement errors are uniformly
$O(n^\delta)$ but unbounded. Both constructions include families with
semi-irrational rotation vectors and families with totally irrational
rotation vectors, and each of the resulting families contains
continuum many distinct topological conjugacy classes. Moreover, none of
these examples is topologically conjugate, by any linear or nonlinear
change of coordinates, to a skew product over a circle rotation.
Consequently, every map in these families satisfies a
$(C,\delta)$-deviation condition for some $C<\infty$, but does not have
bounded mean motion. The complete constructions and proofs are given in
our supplementary note \pcite{CWZBMM}.

\begin{remark}
    From \pcite{WangRigidity2018}, the set of all vectors satisfying the super-Liouvillean assumption in \Cref{thm:rigid-pseudo} is \(G_{\delta}\)-dense in \(\mathbb{R}^2\).
    Meanwhile, by the arithmetic condition in \Cref{thm:rigid-skew}, the set of admissible \(\alpha\) is a residual subset of \(\mathbb{R}\) (see \Cref{app:admissible}); in particular, it contains all Liouville numbers.
\end{remark}

\subsection{From rigidity to Sarnak's conjecture}\label{sec:intro:sar}

Recall the definition of the M\"obius function \(\mu_{\text{Mob}}\) from number theory:

\[
    \mu_{\text{Mob}}(n)=
    \begin{cases}
        (-1)^k, & n=\text{product of }k\text{ distinct primes}, \\
        0,      & \text{otherwise}.
    \end{cases}
\]

Let \((X, d)\) be a compact metric space and \(T:X\to X\) a homeomorphism. In 2012, \tcite{SarnakThree2012}{Sarnak} conjectured that for any topological dynamical system \((X, T)\) with topological entropy zero, and any continuous function \(f:X\to\mathbb{C}\), it holds that
\[
    \lim_{N\to\infty}\frac{1}{N}\sum_{n\le N}f(T^n(x))\mu_{\text{Mob}}(n)=0
\]
for all \(x\in X\).

\begin{remark}
    Since the skew products are extensions of irrational rotations by circle translations, Bowen's topological entropy inequality \pcite{BowenEntropy1971} implies that they have zero topological entropy. Moreover, by \pcite{GlasnerRigidity1989}, every \(C^0\)-rigid system has zero topological entropy. Hence, the maps in \Cref{thm:rigid-pseudo,thm:rigid-pseudo-semi,thm:rigid-skew} all have zero entropy.
\end{remark}

One of the main techniques for proving Sarnak's conjecture is the KBSZ criterion (see \pcite{BourgainDisjointness2013}), and there are many special cases satisfying Sarnak's conjecture by using the KBSZ criterion. In most of these cases, the dynamical system is regular in the sense that for every \(z \in \mathbb{T}^2\) the sequence \(\frac{1}{N}\sum_{n\le N}\delta_{T^n(z)}\) converges in the weak-* topology to a \(T\)-invariant probability measure.

On the other hand, skew products on \(\mathbb{T}^2\) provide one of the simplest examples of irregular dynamical systems satisfying Sarnak's conjecture, and they have been studied extensively over the past decades. The first result for every \(\alpha\) and every analytic \(h\) with some technical condition was proved by \tcite{LiuMobius2015}{Liu and Sarnak}. Later, an improvement of this result was established by \tcite{WangMobius2017}{Wang}, who removed the technical condition and obtained the result for all \(\alpha\) and analytic \(h\). \tcite{HuangMeasure2019}{Huang, Wang and Ye} improved the result to all \(h\in C^{\infty}\) by showing they all have subpolynomial complexity. Recently, \tcite{KanigowskiRigidity2021}{Kanigowski, Lema{\'n}czyk, and Radziwi{\l}{\l}} proved a new criterion (see \Cref{thm:kan-sarnak} below) that if a dynamical system has polynomial rigidity, then it satisfies Sarnak's conjecture. Using this criterion, they succeeded in proving Sarnak's conjecture for all \(h \in C^{2+\epsilon}\) with zero mean. Finally, \tcite{DeFaveriMobius2022}{De Faveri} proved that Sarnak's conjecture is satisfied for all \(h \in C^{1+\epsilon}\) using the same criterion.

Additionally, it should be mentioned that \tcite[Corollary 1.4]{HuangAlmost2025}{Huang, Tan and Xu} recently proved that all continuous skew products satisfy logarithmic Sarnak's conjecture, where the average in the original Sarnak's conjecture is replaced by the logarithmic average.

In our paper, we combine the rigidity results in \Cref{sec:intro:vtor} with the rigidity criterion in \pcite{KanigowskiRigidity2021} (see \Cref{sec:rigid_sar}) to establish Sarnak's conjecture for these systems directly:

\begin{maincorollary}\label{cor:sar}
    The systems described in \Cref{thm:rigid-skew,thm:rigid-pseudo,thm:rigid-pseudo-semi} satisfy Sarnak's conjecture.
\end{maincorollary}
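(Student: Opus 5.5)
The corollary will follow by feeding the rigidity rates of \Cref{thm:rigid-pseudo,thm:rigid-pseudo-semi,thm:rigid-skew} into the criterion of Kanigowski, Lema\'nczyk and Radziwi\l\l{} (\Cref{thm:kan-sarnak}). As I understand it, that criterion has two hypotheses. First, there must be a sequence \(q_j\to\infty\) and some \(\eta>0\) with
\[
d_{C^0}(T^{q_j},\id)\le C q_j^{-\eta}.
\]
Second, the rigidity must hold on a range of iterates: for each fixed \(j\), the estimate \(d(T^{mq_j}x,x)\) must stay small for all multiples \(m\) up to some power \(q_j^{\gamma}\). This is needed because the criterion works with short Möbius averages along arithmetic progressions.

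The plan is therefore, for each of the three classes, to verify these hypotheses and then invoke \Cref{thm:kan-sarnak}. Each class has zero topological entropy by the remark above, so Sarnak's conjecture is meaningful for them.

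\textbf{Step 1: polynomial rigidity.} This comes directly from the main theorems.
\begin{itemize}
\item The exponential bound \(C_1e^{-C_2 n_j}\) of \Cref{thm:rigid-pseudo} is eventually at most \(n_j^{-\eta}\) for any \(\eta\).
\item The superpolynomial bound \(o(n_j^{-\delta})\) of \Cref{thm:rigid-pseudo-semi} holds for every \(\delta\). The \(C^{k-1}\)-rigidity there in particular gives \(C^0\)-rigidity, since \(k-1\ge1\).
\item \Cref{thm:rigid-skew} gives the polynomial rate directly.
\end{itemize}

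\textbf{Step 2: uniformity over multiples.} Since \(T\) is a homeomorphism of a compact space, I would use the triangle inequality along the orbit:
\[
d(T^{mq}x,x)\le\sum_{i=0}^{m-1} d\bigl(T^{(i+1)q}x,\,T^{iq}x\bigr)=\sum_{i=0}^{m-1} d\bigl(T^{q}(T^{iq}x),\,T^{iq}x\bigr)\le m\, d_{C^0}(T^q,\id).
\]
This gives \(d_{C^0}(T^{mq_j},\id)\le m\,Cq_j^{-\eta}\). Hence for all \(m\le q_j^{\eta/2}\) the distance is at most \(Cq_j^{-\eta/2}\), which is still polynomially small. The estimate uses only the metric and no regularity of \(T\), so it applies to all three classes.

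If \Cref{thm:kan-sarnak} is instead formulated for observables, I would reduce to Lipschitz observables by uniform density: Lipschitz functions are uniformly dense in \(C(\mathbb{T}^2)\), and the Möbius averages are bounded by the sup norm. For a Lipschitz \(g\) one has \(|g(T^{mq_j}x)-g(x)|\le \mathrm{Lip}(g)\,mCq_j^{-\eta}\), and this is the required estimate.

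\textbf{Main obstacle.} I expect the hardest part to be matching the precise quantifiers of \Cref{thm:kan-sarnak}. That criterion may require the rigidity sequence to be not too sparse, for example \(q_{j+1}\le q_j^{K}\) for some constant \(K\), so that every scale \(N\) is covered.

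For skew products, the sequence produced by \Cref{thm:rigid-skew} presumably consists of convergent denominators \(q\) with \(|\alpha-p/q|<q^{-r}\). These can be very sparse, so I would check whether the version of the criterion in \Cref{sec:rigid_sar} (following \pcite{KanigowskiRigidity2021} and \pcite{DeFaveriMobius2022}) needs only an infinite sequence. De Faveri's application to \(C^{1+\epsilon}\) skew products suggests that it does.

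If a density condition is needed, my first attempt would be to fill the gaps using the multiples \(mq_j\) from Step 2. These supply rigidity times at every scale between \(q_j\) and \(q_j^{1+\eta/2}\). I would then check whether the exponential or superpolynomial rates in \Cref{thm:rigid-pseudo,thm:rigid-pseudo-semi} give enough room to cover all scales up to \(q_{j+1}\).
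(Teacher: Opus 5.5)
Your argument is essentially the paper's proof (\Cref{lem:rigid-sarnak}). You take polynomial $C^0$-rigidity from the three theorems, use the telescoping bound $d_{C^0}(T^{mq},\id)\le m\,d_{C^0}(T^{q},\id)$, work with Lipschitz observables and linear density, and feed this into \Cref{thm:kan-sarnak}. That criterion only asks for \emph{some} sequence $\{n_k\}$, so your sparseness worry does not arise.

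One quantitative fix is needed. The criterion requires the \emph{sum} $\sum_{|j|\le n_k^{\eta'}}\|g\circ T^{jn_k}-g\|_{L^2(\nu)}$ to tend to $0$, not just each term. With your range $m\le q_j^{\eta/2}$ this sum is only $O(1)$, so you must shrink the range. For example, as the paper does, start from $d_{C^0}(T^{n_k},\id)\le n_k^{-3\eta}$ and sum over $|j|\le n_k^{\eta}$ to get $O(n_k^{-\eta})$. Negative $j$ are handled by $d_{C^0}(T^{-n},\id)=d_{C^0}(T^{n},\id)$.
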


\begin{remark}
    In the case \(\deg(h)\neq 0\), the rotation vector of \(T_{\alpha, h}\) is not well-defined. Thus, \Cref{thm:rigid-skew} does not apply. Using other methods, it was shown in \pcite{WangMobius2017} that when \(h\) is also Lipschitz continuous, the skew product \(T_{\alpha, h}\) satisfies Sarnak's conjecture for every \(\alpha\). However, it remains unknown whether this result can be extended to the case where \(h\) is only H\"older continuous.
\end{remark}

\begin{remark}
    In \pcite{AvilaMixing2020}, the authors proved that if \(f\) is a \(C^k\) pseudo-rotation of the unit disk with rotation number \(\alpha\) that is not of Brjuno type, where \(2\le k< \infty\), then \(f\) is \(C^{k-1}\)-rigid with superpolynomial decay rate. Similarly, when the pseudo-rotations on \(\mathbb{T}^2\) fall into the semi-irrational case, the authors proved \(C^{0}\)-rigidity in \pcite[Theorem 1.(2) and Theorem 2.(2)]{WangRigidity2018} under certain arithmetic and boundedness assumptions, with exponential and superpolynomial decay rates, respectively. Consequently, these pseudo-rotations also satisfy Sarnak's conjecture by the same argument as in \Cref{sec:rigid_sar}.
\end{remark}

\subsection*{Organization}

In \Cref{sec:pre}, we introduce notation, recall the classical results, and we prove \Cref{cor:sar}. In \Cref{sec:p-rot}, we first prove \Cref{lem:freedisk}, which is a key lemma to prove the rigidity results. Then we prove \Cref{thm:rigid-pseudo} in \Cref{sec:p-rot:gen} and prove \Cref{thm:rigid-pseudo-semi} in \Cref{sec:p-rot:semi}. In \Cref{sec:skew}, we prove \Cref{thm:rigid-skew}. In the appendices, we show the set of admissible rotation numbers in \Cref{thm:rigid-skew} is residual and construct continuum families realizing the prescribed H\"older regularity and deviation exponent.

\section{Preliminaries}\label{sec:pre}

\subsection{Notations}
We denote by \(\lambda\) the Lebesgue probability measure on \(\mathbb{T}^2\). Let \(\{a_n\}\) and \(\{b_n\}\) be two sequences of positive numbers. We write \(a_n \ll_{\delta} b_n\) if there exists \(C(\delta) > 0\) such that \(a_n \le C(\delta) b_n\) for all \(n\). When there is no dependence or the dependence on \(\delta\) is clear from the context, we simply write \(a_n\ll b_n\). We write \(a_n \asymp b_n\) if \(a_n \ll b_n\) and \(b_n \ll a_n\). We write \(\left|\cdot\right|\) for the Euclidean norm on \(\mathbb{R}^n\) and define \(\left\|x\right\|_{\mathbb{T}^n} = \min_{y\in \mathbb{Z}^n} \left|x - y\right|\) for \(x\) in \(\mathbb{R}^n\). For \(z=x\mod\mathbb{Z}^n\in\mathbb{T}^n\), we define \(\left\|z\right\|_{\mathbb{T}^n} = \left\|x\right\|_{\mathbb{T}^n}\). We write \(\left\|\cdot\right\|\) for the operator norm.

\subsection{Continued fractions and irrationality measure}

In this section, we introduce some basic properties of continued fractions and irrationality conditions that we need. For more details, see \pcite{khinchinContinued1997}.

Let \(\alpha\) be an irrational number. Let \(\frac{p_k}{q_k}\) be the \(k\)-th convergent for the continued fraction expansion \([a_0; a_1, a_2, \ldots]\) of \(\alpha\).
Then there are some fundamental properties:
\begin{itemize}
    \item \(p_{k+1} = a_{k+1} p_k + p_{k-1}\), \(q_{k+1} = a_{k+1} q_k + q_{k-1}\) and \((p_k, q_k) = 1\);
    \item \(\frac{1}{q_{k+1} + q_k} \le \left\| q_k \alpha \right\|_{\mathbb{T}} \le \frac{1}{q_{k+1}}\);
    \item If \(0 < q < q_{k+1}\), then \(\left\| q_k \alpha \right\|_{\mathbb{T}} \le \left\| q \alpha \right\|_{\mathbb{T}}\).
\end{itemize}

By the second property, we have \(\left\| q_k \alpha \right\|_{\mathbb{T}} \asymp \frac{1}{q_{k+1}}\). From this formula and the definition of irrationality measure, we have

\[
    \mu(\alpha) = 1 + \limsup_{n\to\infty}\frac{\ln q_{n+1}}{\ln q_n}.
\]

We see that every rational number has irrationality measure exactly \(1\), whereas every irrational number has irrationality measure at least \(2\) (possibly equal to \(\infty\)).
In particular, if an irrational number \(\alpha\) has a finite irrationality measure, we say \(\alpha\) is Diophantine; otherwise, this irrational number \(\alpha\) is Liouville.

\subsection{Some properties of the rotation vectors}\label{sec:rotation_vector}

Let \(f\) be a homeomorphism of \(\mathbb{T}^2\) that is isotopic to the identity and \(\tilde{f}\) a lift of \(f\) to the universal covering space \(\mathbb{R}^2\). Suppose that \(\rho(\tilde{f}, z)\) exists. Then \(\rho(\tilde{f}, z)\) has the following properties:

\begin{itemize}
    \item \(\rho(\tilde{f}+k, z) = \rho(\tilde{f}, z) + k\) for all \(k\in\mathbb{Z}^2\);
    \item \(\rho(\tilde{f}^n, z) = n\rho(\tilde{f}, z)\) for all \(n \in \mathbb{N}\).
\end{itemize}

Meanwhile, all lifts of \(f\) on \(\mathbb{R}^2\) mutually differ by a constant \(k\in\mathbb{Z}^2\).

For \(A\in\mathrm{SL}(2,\mathbb{Z})\), we denote by \(\overline{A}\) the automorphism on \(\mathbb{T}^2\), which is induced from \(x\mapsto Ax\) on \(\mathbb{R}^2\). When a pseudo-rotation has a semi-irrational rotation vector, we can conjugate it to a simpler map by the following lemma:

\begin{lemma}[{\pcite[Lemma 2.]{WangRigidity2018}}]\label{lem:semi-rot}
    Let \(f\) be a semi-irrational pseudo-rotation on \(\mathbb{T}^2\) with \(\rho(f)=\overline{\omega}=(\overline{\omega}_1,\overline{\omega}_2)\in\mathbb{T}^2\). Then, there exist an integer \(\ell\ge 1\) and \(A\in\mathrm{SL}(2,\mathbb{Z})\), such that \(f'=\overline{A}f^{\ell}\overline{A}^{-1}\) and \(\rho(f')=(\ell \left\|\mathcal{F}(\overline{\omega})\right\|_{\mathbb{T}}, 0)\), where \(\mathcal{F}(\overline{\omega}) = q(a\overline{\omega}_1+b\overline{\omega}_2)\in\mathbb{T}\setminus(\mathbb{Q}/\mathbb{Z})\) for some \(q,a,b\in\mathbb{Z}\) depending only on \(\overline{\omega}\).
\end{lemma}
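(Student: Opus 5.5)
The plan is to produce \(A\) and \(\ell\) explicitly from the integer relation defining semi-irrationality, and then to compute the rotation vector of the conjugate using the two transformation rules recalled in \Cref{sec:rotation_vector}.

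\textbf{Step 1: a primitive relation.} Since \(\vec{\omega}\) is semi-irrational, there exist integers \(c,d,e\), with \((c,d)\neq(0,0)\), such that \(c\omega_1+d\omega_2+e=0\). Dividing by \(\gcd(c,d)\) only rescales \(e\) by a rational factor, so the integer relation is replaced by \(c'\omega_1+d'\omega_2\in\mathbb{Q}\) with \((c',d')\) primitive. Write \(c'\omega_1+d'\omega_2=-e'/q\) with \(q\ge1\). Because \((c',d')\) is primitive, it can be completed to a matrix \(A\in\mathrm{SL}(2,\mathbb{Z})\) whose second row is \((c',d')\); call the first row \((a,b)\). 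Then
\[
A\vec{\omega}=\bigl(a\omega_1+b\omega_2,\; -e'/q\bigr).
\]
The first coordinate is irrational: if it were rational, \(A\vec{\omega}\) would be rational, and hence so would \(\vec{\omega}\), contradicting irrationality.

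\textbf{Step 2: conjugation and iteration.} If \(\tilde f\) lifts \(f\), then \(A\tilde f A^{-1}\) lifts \(\overline{A}f\overline{A}^{-1}\). From
\[
A\tilde f^n A^{-1}(x)-x=A\bigl(\tilde f^n(y)-y\bigr),\qquad y=A^{-1}x,
\]
the rotation set is transformed linearly, so \(\overline{A}f\overline{A}^{-1}\) is again a pseudo-rotation, with rotation vector \(A\vec\omega\). Set \(\ell=q\). By the rule \(\rho(\tilde f^n)=n\rho(\tilde f)\), the map \(\overline{A}f^{q}\overline{A}^{-1}\) has rotation vector \(\bigl(q(a\omega_1+b\omega_2),\,-e'\bigr)\), which is congruent to \(\bigl(q(a\omega_1+b\omega_2),0\bigr)\) modulo \(\mathbb{Z}^2\). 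Define \(\mathcal{F}(\overline{\omega})=q(a\overline{\omega}_1+b\overline{\omega}_2)\in\mathbb{T}\). It is irrational, and since \(a,b,q\) are integers it is well defined on \(\mathbb{T}^2\).

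\textbf{Step 3: matching the exact normalization.} The statement asks for first coordinate \(\ell\|\mathcal{F}(\overline\omega)\|_{\mathbb{T}}\), that is, \(q\) times the distance to the nearest integer. I expect this bookkeeping to be the main obstacle. First, a sign fix: if the representative of \(\mathcal{F}\) in \((-1/2,1/2)\) is negative, compose \(A\) with \(\mathrm{diag}(-1,-1)\in\mathrm{SL}(2,\mathbb{Z})\), which negates the rotation vector; this makes the first coordinate equal to \(\|\mathcal{F}\|_{\mathbb{T}}\) modulo \(\mathbb{Z}\). To obtain the stated factor \(\ell\), I would choose \(\mathcal{F}(\overline\omega)=a\overline\omega_1+b\overline\omega_2\) in reduced form and \(\ell=q\), so that the first coordinate of the rotation vector of \(f'\) is \(\ell\) times a representative of \(\mathcal{F}\); the constants \(q,a,b\) must be adjusted to be consistent with whichever convention is intended, and this is exactly where care is needed.

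Everything else rests only on the transformation rules from \Cref{sec:rotation_vector} and on the fact that, for \(A\in\mathrm{SL}(2,\mathbb{Z})\), \(\overline{A}\) is a homeomorphism isotopic to a linear automorphism of \(\mathbb{T}^2\). Conjugating by \(\overline{A}\) therefore preserves isotopy to the identity, area preservation and the pseudo-rotation property. The integers \(q,a,b\) depend only on \(\overline\omega\), since they are built solely from the integer relation and the chosen completion.
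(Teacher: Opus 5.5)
Your Steps 1 and 2 are correct and are the standard argument. The paper gives no proof of its own; it cites Wang--Zhang. The argument is: take the primitive generator \((c',d')\) of the rank-one lattice \(\{k\in\mathbb{Z}^2 : k\cdot\vec{\omega}\in\mathbb{Q}\}\), complete it to \(A\in\mathrm{SL}(2,\mathbb{Z})\), and pass to the \(q\)-th iterate to kill the rational coordinate.

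The loose end is Step 3, which you leave open. It needs settling because the two normalizations you try to reconcile are not interchangeable. Take \(q\) to be the reduced denominator in \(c'\omega_1+d'\omega_2=-e'/q\), and set \(\ell=q\) and \(\mathcal{F}=q(a\overline{\omega}_1+b\overline{\omega}_2)\). Step 2 gives \(\rho(f')=(\mathcal{F},0)\), and your \(-I\) flip turns this into \(\rho(f')=(\lVert\mathcal{F}\rVert_{\mathbb{T}},0)\).

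This \(\mathcal{F}\) is canonical up to sign. Any \(A\) that kills the second coordinate of \(\rho(\overline{A}f^{\ell}\overline{A}^{-1})\) must have second row \(\pm(c',d')\). The first row is then \(\pm(a,b)+k(c',d')\), which changes \(q(a\omega_1+b\omega_2)\) only by a sign and the integer \(-ke'\). So \(\lVert\mathcal{F}\rVert_{\mathbb{T}}\) is an invariant of \(\overline{\omega}\), and this is what makes ``strong non-Brjuno type'' a property of \(\overline{\omega}\) alone.

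The same rigidity shows that, with this \(\mathcal{F}\) and \(q\ge 2\), the displayed formula \((\ell\lVert\mathcal{F}\rVert_{\mathbb{T}},0)\) holds for no admissible \(\ell\) and \(A\). The second coordinate forces \(q\mid\ell\). The first coordinate is then \(\pm(\ell/q)\mathcal{F}\), and its difference from \(\ell\lVert\mathcal{F}\rVert_{\mathbb{T}}\) is \((\ell/q)(\pm1-q)\lVert\mathcal{F}\rVert_{\mathbb{T}}\), which is irrational. So the factor \(\ell\) can only be matched literally your way, with \(\mathcal{F}:=a\overline{\omega}_1+b\overline{\omega}_2\) (the statement's \(q\) equal to \(1\)).

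If you take that route, two corrections are needed:
\begin{itemize}
\item Decide the \(-I\) flip by the sign of the representative of \(a\omega_1+b\omega_2\) itself, not of \(q(a\omega_1+b\omega_2)\) as in the first half of your Step 3. The quantities \(\ell\lVert x\rVert_{\mathbb{T}}\) and \(\lVert\ell x\rVert_{\mathbb{T}}\) differ modulo \(1\) in general (e.g.\ \(x=0.3\), \(\ell=2\)).
\item Acknowledge that this \(\mathcal{F}\) depends on the completion, since \((a,b)\mapsto(a,b)+(c',d')\) shifts it by the rational number \(-e'/q\).
\end{itemize}

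Either version suffices for \Cref{thm:rigid-pseudo-semi}. Its proof only uses that the first coordinate of \(\rho(f')\) is a fixed integer multiple of the non-Brjuno number \(\lVert\mathcal{F}(\overline{\omega})\rVert_{\mathbb{T}}\).
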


We call \(\mathcal{F}(\overline{\omega})\) the character frequency of \(\overline{\omega}\) (see \pcite[Section 2.3]{WangRigidity2018} for more details). Since \(\mathcal{F}(\overline{\omega})\) only depends on \(\overline{\omega}\), we say that \(\overline{\omega}\) is of strong non-Brjuno type if \(\left\|\mathcal{F}(\overline{\omega})\right\|_{\mathbb{T}}\) is of non-Brjuno type:
\[
    \sum_{n=0}^{\infty}\frac{\ln(q_{n+1})}{q_n}=+\infty,
\]
where \(\{q_n\}\) is the sequence of denominators of the continued fraction convergents of \(\left\|\mathcal{F}(\overline{\omega})\right\|_{\mathbb{T}}\).

\subsection{Deviation property}\label{sec:deviation}

Let \(f\) be a pseudo-rotation on \(\mathbb{T}^2\) which is isotopic to the identity. Then the deviation of \(f\) has the following properties:
\begin{itemize}
    \item For every fixed \(z\in\mathbb{T}^2\), \(\Delta_n(f,z)\to 0\) as \(n\to \infty\);
    \item \(\Delta_n(f^m,z) = m\Delta_{mn}(f,z)\) for all \(m, n\in\mathbb{N}\).
\end{itemize}

When \(f\) has \((C,\delta)\)-deviation, \(\Delta_n(f^m,z)=m\Delta_{mn}(f,z)\le m\cdot C(mn)^{\delta-1} = Cm^{\delta}n^{\delta-1}\). Therefore, \(f^m\) has \((Cm^{\delta}, \delta)\)-deviation for all \(m\in\mathbb{N}^{+}\). Similarly, \(\overline{A}f\overline{A}^{-1}\) has \((\left\|A\right\|C,\delta)\)-deviation for all \(A\in\mathrm{SL}(2,\mathbb{Z})\).

\begin{remark}
    We show that it is natural to consider \((C,\delta)\)-deviation condition. Let \(T_{\alpha, h}\) be the skew product defined in \Cref{sec:intro:vtor}, where \(\alpha\) is irrational with \(\mu(\alpha) \le \tau\) and \(h\) is \(a\)-H\"{o}lder continuous with \(\deg(h)=0\). By the discrepancy estimate (see \pcite[Section 2.3]{KuipersUniform1974}) together with the Denjoy--Koksma inequality, \(T_{\alpha,h}\) satisfies \((C,1-\frac{a}{\tau-1}+\epsilon)\)-deviation condition for every \(\epsilon>0\). Notice that this exponent is far from optimal.
\end{remark}

\subsection{Rigidity criterion for Sarnak's conjecture}\label{sec:rigid_sar}

We first prove \Cref{cor:sar} from the rigidity results in \Cref{sec:intro:vtor}. In \pcite{KanigowskiRigidity2021}, the authors proved the following useful criterion:

\begin{theorem}[{\pcite[Theorem 1.1]{KanigowskiRigidity2021}}]\label{thm:kan-sarnak}
    Let \((X, T)\) be a topological dynamical system. If for every \(\nu\in M(X,T)\), where \(M(X,T)\) is the set of all \(T\)-invariant Borel probability measures on \(X\), there exists a linearly dense set \(\mathcal{F} \subset C(X)\), namely the closure of the \(\mathbb{C}\)-linear space generated by \(\mathcal{F}\) is \(C(X)\), such that for all \(f\in\mathcal{F}\), we can find \(\eta>0\) and a sequence \(\{n_k\}_{k}\subset\mathbb{N}\) satisfying
    \[
        \sum_{j\in\mathbb{Z}\cap\left[-n_k^{\eta}, n_k^{\eta}\right]} \left\| f\circ T^{jn_k} - f \right\|_{L^2(\nu)}\to 0,
    \]
    then \((X,T)\) satisfies Sarnak's conjecture.
\end{theorem}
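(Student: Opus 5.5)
The plan is to follow the strategy of Kanigowski, Lema\'nczyk and Radziwi\l\l: fix $x\in X$ and $g\in C(X)$, and show that $\frac1N\sum_{n\le N}g(T^nx)\mu_{\text{Mob}}(n)\to0$. By linear density and the uniform bound $|\mu_{\text{Mob}}|\le1$, it suffices to treat $g\in\mathcal F$. We argue by contradiction: suppose the average stays above $\varepsilon>0$ along some sequence $N_i\to\infty$. Passing to a subsequence, $\frac1{N_i}\sum_{n\le N_i}\delta_{T^nx}\to\nu$ weak-$*$, and $\nu\in M(X,T)$. For this $\nu$ take the linearly dense family $\mathcal F$ from the hypothesis. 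Approximate $g$ in sup norm by a finite combination of elements of $\mathcal F$; it then suffices to prove the claim for one $f\in\mathcal F$ with its $\eta$ and sequence $n_k$.

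Next we transfer the rigidity from $L^2(\nu)$ to the orbit of $x$. Fix $k$ with $q=n_k$ large and set $H=\lfloor q^{\eta}\rfloor$. By weak-$*$ convergence, applied to the continuous functions $|f\circ T^{jq}-f|^2$ for the finitely many $|j|\le H$, we get
\[
\limsup_{i}\frac1{N_i}\sum_{n\le N_i}\sum_{|j|\le H}\bigl|f(T^{n+jq}x)-f(T^nx)\bigr|\le \sum_{|j|\le H}\bigl\|f\circ T^{jq}-f\bigr\|_{L^2(\nu)}=:\epsilon_k\to0,
\]
using Cauchy--Schwarz for each $j$. Write $a(n)=f(T^nx)$. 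Split $[1,N_i]$ into blocks of length $Hq$, and inside each block into the residue classes modulo $q$. The display then says that, on average over blocks and residues, $a$ is nearly constant along the progression $\{n_0+jq: 0\le j<H\}$, with error $\epsilon_k/H$ per term in mean. Replacing $a(n_0+jq)$ by $a(n_0)$ costs $O(\epsilon_k)$ in the full average. It remains to bound
\[
\frac1{N_i}\sum_{\text{blocks }B}\ \sum_{r\bmod q}\ |a(n_{B,r})|\ \Bigl|\sum_{0\le j<H}\mu_{\text{Mob}}(n_{B,r}+jq)\Bigr|,
\]
which is at most $\|f\|_\infty$ times the average over blocks and residues of $\bigl|\sum_{j<H}\mu_{\text{Mob}}(n_{B,r}+jq)\bigr|$.

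The main obstacle is the number-theoretic input. We need that the sums of $\mu_{\text{Mob}}$ over arithmetic progressions of modulus $q$ and length $H=q^{\eta}$, that is, over intervals of length $q^{1+\eta}$, are $o(H)$ on average over starting points $n\le N$ and residues $r$ modulo $q$. This must hold uniformly as $N\to\infty$ with $q$ fixed but large. I would derive it from the Matom\"aki--Radziwi\l\l--Tao theory of multiplicative functions in almost all short intervals, in its version twisted by Dirichlet characters. First, decompose the progression condition $n\equiv r \pmod q$ by removing $\gcd(r,q)$ and expanding in characters modulo $q'\mid q$. Then use the large-sieve form of the Matom\"aki--Radziwi\l\l{} bound: $\sum_{\chi\bmod q}\frac1X\int_X^{2X}\bigl|\frac1h\sum_{x<n\le x+h}\mu_{\text{Mob}}(n)\chi(n)\bigr|\,dx$ is small whenever $h\ge q^{1+\eta}$. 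The pretentious obstruction, where $\mu_{\text{Mob}}\chi$ correlates with $n^{it}$, is excluded by Vinogradov--Korobov type zero-free regions, uniformly in $\chi$ modulo $q$.

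Granting this estimate, the block average is $o_{q\to\infty}(1)$ uniformly in $N$. Letting first $i\to\infty$ and then $k\to\infty$, the total is bounded by $O(\epsilon_k)+o(1)$, which contradicts the assumed lower bound $\varepsilon$. This proves the theorem, and hence Sarnak's conjecture for $(X,T)$.
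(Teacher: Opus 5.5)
The paper gives no proof of this statement. It is imported from \pcite{KanigowskiRigidity2021} and used as a black box in Lemma~\ref{lem:rigid-sarnak}, so your proposal has to stand on its own.

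Your dynamical half is sound. You choose the quasi-generic $\nu$ first and only then $\mathcal F=\mathcal F_\nu$, transfer the $L^2(\nu)$-rigidity to the orbit by weak-$*$ convergence, and bound the cost of freezing $a(n_0+jq)$ to $a(n_0)$ by $\epsilon_k$; the last step is valid because the starting points $n_0$ are a subset of all $n$ and the summands are nonnegative. You should also average the block decomposition over its $Hq$ offsets, which is allowed since the freezing bound is uniform in the offset; this makes the M\"obius term an average over all starting points rather than over a fixed grid. Everything then reduces to
\[
\lim_{q\to\infty}\,\limsup_{N\to\infty}\,\frac1N\sum_{n\le N}\Bigl|\frac1H\sum_{0\le j<H}\mu_{\text{Mob}}(n+jq)\Bigr|=0,\qquad H=\lfloor q^{\eta}\rfloor,
\]
for each fixed $\eta>0$. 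This is the whole arithmetic content of the theorem, and you only grant it.

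That is where the genuine gap lies, and the route you sketch for it fails as written. Let $S_r$ be the sum of $\mu_{\text{Mob}}$ over the progression segment of residue $r$ in an interval of length $h=qH$, and let $A_\chi=\frac1h\sum_{x<n\le x+h}\mu_{\text{Mob}}(n)\chi(n)$. Expanding the indicator of $\{n\equiv r\bmod q\}$ in characters and using the triangle inequality gives $\frac1q\sum_{(r,q)=1}|S_r|/H\le\sum_{\chi\bmod q}|A_\chi|$. So you need exactly your displayed $L^1$ bound, summed over all $\varphi(q)$ characters.

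That bound cannot hold for $\eta<1$. It forces, on average over $\chi$, $|hA_\chi|=o(h/\varphi(q))$, which is essentially $o(H)=o(h^{\eta/(1+\eta)})$, i.e.\ better than square-root cancellation. In a random $\pm1$ model the sum is of order $\varphi(q)h^{-1/2}\approx q^{(1-\eta)/2}\to\infty$. The triangle inequality over characters wastes a factor $\sqrt{\varphi(q)}$.

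The correct step is Cauchy--Schwarz over residues together with orthogonality, $\sum_{(r,q)=1}|S_r|^2=\varphi(q)^{-1}\sum_\chi|hA_\chi|^2$. This gives $\frac1q\sum_{(r,q)=1}|S_r|/H\le\bigl(\sum_\chi|A_\chi|^2\bigr)^{1/2}$. After treating $\gcd(r,q)=d>1$ by writing $n=dm$ and working modulo $q/d$, the problem reduces to
\[
\lim_{q\to\infty}\,\limsup_{X\to\infty}\,\sum_{\chi\bmod q}\frac1X\int_X^{2X}\Bigl|\frac1h\sum_{x<n\le x+h}\mu_{\text{Mob}}(n)\chi(n)\Bigr|^2\,dx=0,\qquad h=qH .
\]

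Even this is not an off-the-shelf consequence of Matom\"aki--Radziwi\l\l. Orthogonality only gives the trivial bound $\ll 1+q/h$. The single-character mean-square bound $\ll\frac{\log\log h}{\log h}$, summed over $\varphi(q)$ characters, is useless. One has to run the Matom\"aki--Radziwi\l\l{} argument over the whole hybrid family $\{(\chi,t):\chi\bmod q,\ |t|\le X/h\}$, with bounds uniform in $q$.

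The zero-free-region input you emphasize is harmless here, because $q$ is fixed while $X\to\infty$. The uniformity in $q$ is the real difficulty, and that uniform short-progression estimate is the number-theoretic heart of \pcite{KanigowskiRigidity2021}. Until you prove it or cite it in this precise form, your argument is a correct reduction, not a proof.
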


Since \(C^0\)-rigidity is a stronger property, we have the following lemma, and then \Cref{cor:sar} follows immediately.

\begin{lemma}
    \label{lem:rigid-sarnak}
    Let \((X, d)\) be a compact metric space and \(T:X\to X\) be a homeomorphism. Suppose that \(T\) is \(C^0\)-rigid with polynomial decay rate, then \((X, T)\) satisfies Sarnak's conjecture.
\end{lemma}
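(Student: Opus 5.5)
We will verify the hypothesis of \Cref{thm:kan-sarnak} using a single linearly dense family that works simultaneously for every invariant measure $\nu\in M(X,T)$. By assumption there are constants $C_0,\eta_0>0$ and a sequence $n_k\to\infty$ with $d_{C^0}(T^{n_k},\id)=\sup_{x\in X}d(T^{n_k}x,x)\le C_0 n_k^{-\eta_0}$. Take $\mathcal{F}$ to be the set of Lipschitz functions $X\to\mathbb{C}$. This set is linearly dense in $C(X)$: the real Lipschitz functions form a subalgebra that separates points (for instance $x\mapsto d(x,x_0)$) and contains the constants, so Stone--Weierstrass applies.

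Fix $g\in\mathcal{F}$ with Lipschitz constant $L$, and fix an integer $j$ with $|j|\le n_k^{\eta}$. For $j\ge0$ we use the triangle inequality along the orbit:
\[
d(T^{jn_k}x,x)\le\sum_{i=0}^{j-1}d\bigl(T^{n_k}(T^{in_k}x),T^{in_k}x\bigr)\le |j|\,C_0n_k^{-\eta_0}.
\]
The key point is that the $C^0$ bound is uniform in the base point. For $j<0$, substitute $y=T^{jn_k}x$ to get $d(x,T^{jn_k}x)=d(T^{|j|n_k}y,y)$, so the same bound holds; this uses that $T$ is a homeomorphism. Hence
\[
\|g\circ T^{jn_k}-g\|_{L^2(\nu)}\le\|g\circ T^{jn_k}-g\|_\infty\le L|j|C_0n_k^{-\eta_0}.
\]

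Summing over the integers $j\in[-n_k^\eta,n_k^\eta]$, whose absolute values are at most $n_k^\eta$, gives
\[
\sum_{|j|\le n_k^\eta}\|g\circ T^{jn_k}-g\|_{L^2(\nu)}\le LC_0(2n_k^\eta+1)\,n_k^{\eta}\,n_k^{-\eta_0}\ll n_k^{2\eta-\eta_0}.
\]
Choosing $\eta=\eta_0/3$ makes the right-hand side tend to $0$ as $k\to\infty$. \Cref{thm:kan-sarnak} then yields Sarnak's conjecture.

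The main potential obstacle is the density step, which is standard since $X$ is a compact metric space. There is no real analytic difficulty: the polynomial decay rate is exactly what absorbs the polynomially many terms in the sum.

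\Cref{cor:sar} follows because the exponential and superpolynomial decay rates in \Cref{thm:rigid-pseudo,thm:rigid-pseudo-semi} imply the polynomial rate, and \Cref{thm:rigid-skew} gives the polynomial rate directly.
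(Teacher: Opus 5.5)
Your proof is correct and follows essentially the same route as the paper: Lipschitz test functions, telescoping $d(T^{jn_k}x,x)$ along the orbit using the uniform $C^0$ bound, symmetry for negative $j$ because $T$ is a homeomorphism, and the choice $\eta=\eta_0/3$ so that the polynomial decay absorbs the polynomially many terms in the sum. You are slightly more careful than the paper in two places: you keep the constant $C_0$ explicit rather than normalizing the rate to $n_k^{-3\eta}$, and you justify the density of Lipschitz functions via Stone--Weierstrass.
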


\begin{proof}
    Let \(M(X,T)\) be the set of all \(T\)-invariant Borel probability measures on \(X\).
    Since \(T\) is \(C^0\)-rigid with polynomial decay rate, there exists a sequence \(\{n_k\}_{k\ge 1}\) and \(\eta > 0 \) such that
    \[
        d_{C^0}(T^{n_k}, \id) \leq n_k^{-3\eta}.
    \]

    Since \(T\) is a homeomorphism, \(d_{C^0}(T^{n}, \id)=d_{C^0}(T^{-n}, \id)\) for all \(n\in\mathbb{N}\).

    For all \(\nu \in M(X,T)\) and \(f\) a Lipschitz function in \(C(X)\) with Lipschitz constant \(M_f\), we have

    \begin{align*}
        \sum_{j\in\mathbb{Z}\cap\left[-n_k^{\eta}, n_k^{\eta}\right]} \left\| f\circ T^{jn_k} - f \right\|_{L^2(\nu)}
         & \leq 2M_f \sum_{j\in\mathbb{Z}\cap\left[1, n_k^{\eta}\right]} d_{C^0}(T^{jn_k}, \id)                           \\
         & \leq 2M_f \sum_{j\in\mathbb{Z}\cap\left[1, n_k^{\eta}\right]} \sum_{i=0}^{j-1} d_{C^0}(T^{(i+1)n_k}, T^{in_k}) \\
         & \le 2M_f n_k^{-3\eta} \sum_{j\in\mathbb{Z}\cap\left[1, n_k^{\eta}\right]} \left|j\right|                       \\
         & \le M_f n_k^{-3\eta} \cdot n_k^{\eta} \cdot (n_k^{\eta} + 1)                                                   \\
         & \ll n_k^{-\eta}\to 0 \text{ when } k\to\infty.
    \end{align*}
    Hence \((X, \nu, T)\) has PR rigidity defined in \pcite{KanigowskiRigidity2021}. Since the set of all Lipschitz functions is dense in \(C(X)\), by \Cref{thm:kan-sarnak}, \((X, T)\) satisfies Sarnak's conjecture.
\end{proof}

\section{Results on general pseudo-rotations}\label{sec:p-rot}

To prove \Cref{thm:rigid-pseudo}, the key step is \Cref{lem:freedisk}, which builds a connection between the rotation vector and the maximal area of a free disk for a homeomorphism \(f\). The proof of \Cref{lem:freedisk} uses the following lemma.

Let \(D\) be a topological disk in \(\mathbb{T}^2\) or \(\mathbb{R}^2\). We say that \(D\) is a free disk for a homeomorphism \(f\) if \(f(D) \cap D = \emptyset\).
A free disk chain for a homeomorphism \(\tilde{f}\) of \(\mathbb{R}^2\) is a finite sequence \((b_i)_{i=1}^{n}\) of homeomorphically embedded open disks in \(\mathbb{R}^2\) satisfying
\begin{itemize}
    \item \(b_i\) is a free disk for \(1\leq i \leq n\);
    \item if \(i \not= j\), then either \(b_i = b_j\) or \(b_i \cap b_j = \emptyset\);
    \item for \(1 \leq i < n\), there exists \(m_i > 0\) such that \(\tilde{f}^{m_i}(b_i) \cap b_{i+1} \not= \emptyset\).
\end{itemize}
We say that a free disk chain is periodic if \(b_1 = b_n\).

In \pcite{FranksGeneralizations1988}, Franks proved the following lemma about the existence of fixed points from Brouwer theory.
\begin{lemma}[\pcite{FranksGeneralizations1988}]
    Let \(\tilde{f} : \mathbb{R}^2 \to \mathbb{R}^2\) be an orientation-preserving homeomorphism that possesses a periodic free disk chain. Then \(\tilde{f}\) has at least one fixed point.
\end{lemma}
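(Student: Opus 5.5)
This is Franks' lemma: an orientation-preserving homeomorphism of \(\mathbb{R}^2\) with a periodic free disk chain has a fixed point. The plan is to argue by contradiction. Assume \(\tilde{f}\) is fixed point free and use Brouwer's plane translation theory. The key input is Brouwer's lemma on translation arcs: if \(\tilde{f}\) is a fixed point free orientation-preserving homeomorphism of \(\mathbb{R}^2\), then no free disk \(b\) satisfies \(\tilde{f}^m(b)\cap b\neq\emptyset\) for some \(m\ge 1\). So every point is wandering, and no periodic disk recurrence occurs. The proof then reduces a periodic chain of length greater than one to a single recurrent free disk.

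\textbf{Step 1: normalize the chain.} I would first shorten the chain so that the disks \(b_1,\dots,b_{n-1}\) are pairwise distinct, hence pairwise disjoint, while keeping \(b_n=b_1\). Whenever \(b_i=b_j\) with \(i<j\) and \((i,j)\neq(1,n)\), the segment strictly between them can be cut out. This gives either a shorter periodic chain or one of length one. A length-one periodic chain is a free disk \(b\) with \(\tilde f^{m}(b)\cap b\ne\emptyset\), which is excluded directly by Brouwer's lemma.

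\textbf{Step 2: merge adjacent disks.} Next I pick points \(x_i\in b_i\) with \(\tilde f^{m_i}(x_i)\in b_{i+1}\). Franks' trick is to replace \(\tilde f\) by a perturbation \(g=h\circ\tilde f\), where \(h\) is a homeomorphism supported in the disjoint union of the \(b_i\). Inside each \(b_{i+1}\), \(h\) moves \(\tilde f^{m_i}(x_i)\) to \(x_{i+1}\). Because each \(b_i\) is free and the supports are disjoint, \(g\) has the same fixed point set as \(\tilde f\), namely the empty set. Indeed, if \(g(y)=y\) then either \(y\notin\tilde f^{-1}(\cup b_i)\), so \(\tilde f(y)=y\), or \(\tilde f(y)\in b_i\) and \(y=h(\tilde f(y))\in b_i\), which contradicts \(\tilde f(b_i)\cap b_i=\emptyset\). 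The map \(g\) is still orientation preserving.

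\textbf{Step 3: produce a periodic point and conclude.} With care in the choice of the \(x_i\), \(g\) has a periodic orbit passing through \(x_1\). One needs, inductively, that the intermediate iterates \(\tilde f^{k}(x_i)\) for \(0<k<m_i\) avoid the disks in a way that keeps \(g^k=\tilde f^k\) there. This is arranged by taking \(m_i\) minimal and, where necessary, relabeling to the first entrance into \(\cup b_j\), which can only shorten the chain. We then obtain \(g^{M}(x_1)=x_1\) with \(g\) fixed point free. This contradicts Brouwer's lemma, since a fixed point free orientation-preserving plane homeomorphism has no periodic points.

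The main obstacle is Step 3's bookkeeping: intermediate iterates of \(x_i\) may pass through other chain disks, where \(h\) acts. The remedy is the first-return reduction just described, which may replace the chain with a shorter one. Induction on chain length then closes the argument.
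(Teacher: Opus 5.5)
The paper gives no proof of this lemma; it quotes it from Franks. Your argument is a correct reconstruction of Franks' proof: assume \(\tilde f\) has no fixed point, use Brouwer's lemma to make every free disk wandering, compose with a homeomorphism supported in the pairwise disjoint disks so the chain becomes a genuine periodic orbit of an orientation-preserving map with the same (empty) fixed point set, and contradict Brouwer's lemma again. The bookkeeping in Step 3 is cleanest if you fix a periodic chain of minimal length at the outset, since pairwise distinctness from Step 1 does not suffice by itself. With a minimal chain, the first entrance of the orbit of \(x_i\) into \(\bigcup_j b_j\) must be into \(b_{i+1}\): entering \(b_i\) is excluded by Brouwer's lemma, and entering any other \(b_j\) would yield a shorter periodic chain.
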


Using Franks' lemma, we can now obtain a result to control the area of a free disk by the length of the rotation vector.

\begin{lemma}\label{lem:freedisk}
    Let \(f\) be an area-preserving irrational pseudo-rotation with rotation vector \(\rho(f)=\overline{\omega} \in \mathbb{R}^2/\mathbb{Z}^2\). We denote by \(\omega = \left\|\overline{\omega}\right\|_{\mathbb{T}^2}\) the \(\mathbb{T}^2\)-norm of \(\overline{\omega}\). Assume \(f\) has \((C, \delta)\)-deviation for some \(C < \infty\) and \(\delta \in \left[0, \frac{1}{2}\right)\). If \(D\) is a free disk on \(\mathbb{T}^{2}\) such that each connected component of \(\pi^{-1}(D)\) has diameter less than \(1\), then there exists \(c \ll_\delta (C+1)^{1+\frac{1}{1-2\delta}}\) such that the area \(\lambda(D)\) of \(D\) is no more than \(c \omega^{\frac{1-2\delta}{1-\delta}}\).
\end{lemma}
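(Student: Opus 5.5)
The idea is to compare the total area of the lifted iterates of a free disk with the area of a thin tube around the rotation direction in \(\mathbb{R}^2\). Disjointness of the iterates will come from Franks' lemma. The size of the tube will come from the \((C,\delta)\)-deviation. The exponent \(\frac{1-2\delta}{1-\delta}\) then falls out of optimizing the number of iterates.

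\emph{Step 1 (setup).} Fix the lift \(\tilde f\) with \(\rho(\tilde f)=\vec\omega\), where \(|\vec\omega|=\omega=\|\overline{\omega}\|_{\mathbb{T}^2}\). Changing the lift changes \(\tilde f^n(x)-x\) and \(n\rho(\tilde f)\) by the same integer vector, so the deviation is unchanged and this \(\tilde f\) still has \((C,\delta)\)-deviation. Let \(\tilde D\) be a connected component of \(\pi^{-1}(D)\), and fix \(\tilde z\in\tilde D\). Then \(\tilde D\) is a free disk for \(\tilde f\), its area equals \(\lambda(D)\) because \(\pi\) is injective on it, and \(\operatorname{diam}\tilde D<1\). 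We may assume \(\omega\) is small, say \(\omega\le 1\); otherwise the bound holds trivially since \(\lambda(D)\le 1\).

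\emph{Step 2 (disjointness).} I claim the sets \(\tilde f^k(\tilde D)\), \(0\le k<n\), are pairwise disjoint for every \(n\). Suppose \(\tilde f^{k}(\tilde D)\cap\tilde f^{j}(\tilde D)\neq\emptyset\) with \(j<k\). Then \(\tilde f^{k-j}(\tilde D)\cap\tilde D\neq\emptyset\), so the one-disk sequence \(b_1=b_2=\tilde D\) is a periodic free disk chain for \(\tilde f\). Franks' lemma then gives a fixed point of \(\tilde f\), whose rotation vector is \(0\). Since \(f\) is a pseudo-rotation, this forces \(\vec\omega=0\), contradicting irrationality. Area preservation of \(\tilde f\) then gives that the union of these iterates has area exactly \(n\lambda(D)\).

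\emph{Step 3 (tube estimate).} By the deviation property, for \(x\in\tilde D\) and \(1\le k<n\) we have
\[
|\tilde f^k(x)-x-k\vec\omega|\le k\Delta_k(f)\le Ck^{\delta}\le Cn^{\delta}.
\]
Combined with \(\operatorname{diam}\tilde D<1\), every iterate \(\tilde f^k(\tilde D)\) lies in the \((Cn^\delta+1)\)-neighbourhood of the segment \([\tilde z,\tilde z+n\vec\omega]\). The area of this neighbourhood is at most \(\ll (n\omega+Cn^\delta+1)(Cn^\delta+1)\). Dividing by \(n\),
\[
\lambda(D)\ll \frac{(n\omega+(C+1)n^\delta)(C+1)n^{\delta}}{n}\ll (C+1)\,\omega n^{\delta}+(C+1)^2 n^{2\delta-1}.
\]

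\emph{Step 4 (optimization).} Balance the two terms by taking \(n\asymp \big((C+1)/\omega\big)^{1/(1-\delta)}\), which is at least \(1\) since \(\omega\le1\). Then \(\omega n^\delta\asymp (C+1)^{\delta/(1-\delta)}\omega^{(1-2\delta)/(1-\delta)}\), and the second term is of the same order. This gives \(\lambda(D)\le c\,\omega^{\frac{1-2\delta}{1-\delta}}\), where \(c\) is bounded by a power of \(C+1\) times a \(\delta\)-dependent constant. The requirement \(\delta<\tfrac12\) is exactly what makes the exponent \(\frac{1-2\delta}{1-\delta}\) positive.

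\emph{Main obstacle.} The delicate step is Step 2, making sure the application of Franks' lemma to the lift \(\tilde f\) is legitimate and that only a fixed point of \(\tilde f\) itself is produced. Nothing needs to be said about translates \(\tilde D+p\), since the tube argument never uses them. A secondary task is tracking the constants carefully, including integer rounding of \(n\) and the edge cases \(n=1\) or \(\omega\) comparable to \(1\), to match the stated power \((C+1)^{1+\frac{1}{1-2\delta}}\). If the direct computation gives a smaller power, it is still dominated by the stated one.
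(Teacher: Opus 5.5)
Your proof is correct, and it takes a genuinely different route from the paper's. Your Step 2 is sound as written: the pair \(b_1=b_2=\tilde D\) is a legitimate periodic free disk chain, \(\tilde f\) is orientation preserving, and a fixed point of \(\tilde f\) would force \(\vec\omega=0\).

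The paper never treats the forward iterates of \(\tilde D\) as sets. It passes to the first-return map \(f_D\) and records the integer displacements \(l_D\). A lattice-point pigeonhole, together with Franks' lemma applied to chains of integer translates \(\tilde D+\sum_{i<j}l_D(f_D^i(z))\), forces the partial displacement sums out of a ball of radius \(\sqrt{m/c_0}\) within \(m\) returns. The deviation bound turns this into a lower bound on the average return time, using the tuned choice \(N=q\lceil\omega^{-2p(\delta)}\rceil\). Kac's lemma then turns that into the area bound.

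You instead use Brouwer's lemma to make \(\tilde f^k(\tilde D)\), \(0\le k<n\), pairwise disjoint in the plane. You then compare their total area \(n\lambda(D)\) with the area of the \((Cn^\delta+1)\)-neighbourhood of the drift segment. This buys several things:
\begin{itemize}
\item It dispenses with recurrence, Kac's lemma and the lattice count.
\item It shows the exponent \(\frac{1-2\delta}{1-\delta}\) directly as a balance of tube width against tube length.
\item It gives a better constant, namely an absolute constant times \((C+1)^{\frac{1}{1-\delta}}\). This lies within the stated bound since \(\frac{1}{1-\delta}\le 2\le 1+\frac{1}{1-2\delta}\).
\end{itemize}
The better constant would mildly reduce the polynomial losses in the proofs of \Cref{thm:rigid-pseudo,thm:rigid-pseudo-semi}, where \(C\) is replaced by \(Cn_j^\delta\).

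What the paper's return-time bookkeeping buys is continuity with Wang--Zhang and reuse in \Cref{lem:freedisk-skew}. There, tracking the horizontal component of the integer displacements exploits the rigid horizontal motion of a skew product. This yields the stronger bound \(c\,\omega^{1-\delta}\) for all \(\delta<1\), which your volume count, as it stands, does not capture.
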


\begin{proof}
    Since \(\overline{\omega} \in \mathbb{R}^2/\mathbb{Z}^2\), we can choose a representative of \(\overline{\omega}\) in \(\mathbb{R}^2\), denoted by \(\vec{\omega}\), such that \(\left|\vec{\omega}\right| = \left\|\overline{\omega}\right\|_{\mathbb{T}^2} = \omega\). By the definition of the rotation vector, there is a lift \(\tilde{f}\) such that \(\rho(\tilde{f}) = \vec{\omega}\).

    Let \(D \subset \mathbb{T}^2\) be as in the lemma.
    Let \(E \subset \mathbb{R}^2\) be a bounded fundamental domain that contains a connected component of \(\pi^{-1}(D)\). Denote this component by \(\tilde{D}\). Let \(s: \mathbb{T}^2 \to E\) be the map such that \(\pi\circ s=\id_{\mathbb{T}^2}\). Let \(\Diam(E) = \sup_{x,y\in E}d(x,y)\) be the diameter of \(E\). Since \(\Diam(\tilde{D})<1\), we can choose \(E\) such that \(\Diam(E)\le 2\).

    We denote by \(\Recp(f)\) the set of positively recurrent points of \(f\). Then by Poincar\'e's recurrence theorem, \(\lambda(\Recp(f)) = \lambda(\mathbb{T}^2) = 1\). For every \(z \in \Recp(f) \cap D\), let \(n_D(z) = \min\{n\ge 1: f^n(z) \in D\}\) be its first return time. Then we define \(f_D(z) = f^{n_D(z)}(z)\) to be the first return map of \(D\).
    Then \(f_D\) is area-preserving on \(D\) and \(\Recp(f) \cap D\) is invariant under \(f_D\).

    For all \(z \in \Recp(f) \cap D\), let \(l_D(z) \in \mathbb{Z}^2\) be the unique point such that \(\tilde{f}^{n_D(z)}(s(z)) \in l_D(z) + \tilde{D}\), which means the integer displacement of the lifted orbit when it first comes back to \(D\). It is obvious that \(\tilde{f}^{n_D(z)}(s(z)) = l_D(z) + s(f_D(z))\). Therefore, we have
    \begin{equation}\label{eq:displacement}
        \tilde{f}^{\sum_{i=0}^{N-1}n_D(f_D^i(z))}(s(z)) = \sum_{i=0}^{N-1}l_D(f_D^i(z)) + s(f_D^N(z)).
    \end{equation}

    By the definition of the \(\mathbb{T}^2\)-norm, \(\omega \le 1\). Note that
    \begin{equation}
        \left| \frac{\sum_{i = 0}^{N-1} l_{D}(f_{D}^{i}(z))}{\sum_{i = 0}^{N-1} n_{D}(f_{D}^{i}(z))} - \vec{\omega} \right|
        \leq \frac{\Diam(E)}{\sum_{i = 0}^{N-1} n_{D}(f_{D}^{i}(z))} + \Delta_{\sum_{i = 0}^{N-1} n_{D}(f_{D}^{i}(z))}(f). \label{eq:freedisk-1}
    \end{equation}

    By the triangle inequality, we have
    \[
        \left| \frac{\sum_{i = 0}^{N-1} l_{D}(f_{D}^{i}(z))}{\sum_{i = 0}^{N-1} n_{D}(f_{D}^{i}(z))} \right|
        \leq \omega + \frac{2}{\sum_{i = 0}^{N-1} n_{D}(f_{D}^{i}(z))} + \Delta_{\sum_{i = 0}^{N-1} n_{D}(f_{D}^{i}(z))}(f).
    \]

    Hence,
    \[
        \frac{1}{N}\sum_{i = 0}^{N-1} n_{D}(f_{D}^{i}(z))
        \geq \frac{\left|\frac{1}{N}\sum_{i = 0}^{N-1} l_{D}(f_{D}^{i}(z))\right|}
        {\omega + \frac{2}{\sum_{i = 0}^{N-1} n_{D}(f_{D}^{i}(z))} + \Delta_{\sum_{i = 0}^{N-1} n_{D}(f_{D}^{i}(z))}(f)}.
    \]

    For the Euclidean ball \(B(R) = \left\{x\in\mathbb{R}^2:\left|x\right|\le R\right\}\), there exists an absolute constant \(c_0>0\) such that
    \(\#\left\{\mathbb{Z}^2\cap B(R)\right\}\le c_0 R^2\) for all \(R\ge 1\). By the pigeonhole principle, for all \(m \geq 1\), there exists \(k(m)=k_{f,N}(m) \in \{N, \dots, N+m\}\) such that
    \[
        \left| \sum_{i = 0}^{k(m)-1} l_{D}(f_{D}^{i}(z)) \right| \geq \sqrt{\frac{m}{c_0}}.
    \]

    Otherwise, there must be distinct \(N\le k_1<k_2\le N+m\) such that
    \begin{equation}\label{eq:cond}
        \sum_{i=k_1}^{k_2-1}l_D(f_D^{i}(z))=(0,0).
    \end{equation}
    WLOG, we may choose \(k_1, k_2\) such that there is no \((k'_1,k'_2)\) with \(k_1\le k'_1 < k'_2 \le k_2\) and \((k'_1,k'_2)\neq(k_1,k_2)\) satisfying \eqref{eq:cond}. Therefore, the following disks:
    \[
        \tilde{D}_j = \tilde{D} + \sum_{i=0}^{j-1}l_D(f_D^{i}(z)),\qquad k_1\le j\le k_2
    \]
    are pairwise disjoint and \(\tilde{D}_{k_1}=\tilde{D}_{k_2}\). By the definition of \(l_D\),
    \[
        \tilde{f}^{\sum_{i = 0}^{j-1} n_{D}(f_D^i(z))}(s(z))\in \tilde{D}_{j}
    \]
    for all \(k_1\le j\le k_2\). Therefore,
    \[
        \tilde{f}^{n_{D}(f_D^j(z))}(\tilde{D}_j)\cap\tilde{D}_{j+1}\neq\emptyset,\qquad k_1\le j < k_2.
    \]
    Consequently, \(\tilde{D}_{k_1}, \tilde{D}_{k_1+1},\ldots,\tilde{D}_{k_2}\) is a periodic free disk chain. Then \(\tilde{f}\) would have a fixed point by Franks' lemma, which leads to a contradiction.

    Hence, there exists \(c_{1} = c_{1}(\delta, C) \ll_\delta (C+1)^{-1} \in (0,1]\) such that
    \begin{equation}
        T_{N} \geq c_{1} \frac{1}{\sqrt{k(N)}} \frac{1}{\omega + (k(N))^{-1+\delta} T_{N}^{-1+\delta}}, \label{eq:freedisk-Tnestimate}
    \end{equation}

    where
    \[
        T_{N} = T_{D,z,f,N} = \frac{1}{k(N)}\sum_{i = 0}^{k(N)-1} n_{D}(f_{D}^{i}(z)) \geq 1.
    \]

    Note that, by Kac's lemma,
    \begin{align*}
        \frac{1}{\lambda(D)}
        \geq \frac{\int_{D} n_{D}(z) \, \mathrm{d}z}{\lambda(D)}
        =    & \frac{\int_{D} \frac{1}{2N}\sum_{i = 0}^{2N-1} n_{D}(f_{D}^{i}(z)) \, \mathrm{d}z}{\lambda(D)}          \\
        \geq & \frac{\int_{D} \frac{1}{k_z(N)}\sum_{i = 0}^{k_z(N)-1} n_{D}(f_{D}^{i}(z)) \, \mathrm{d}z}{2\lambda(D)}
        = \frac{\int_{D} T_{z,N} \, \mathrm{d}z}{2\int_{D} 1 \mathrm{d}z}.
    \end{align*}

    It suffices to give an almost surely uniform lower bound of \(\{T_{N}(x): x \in D\}\) for some \(N \geq 1\). We take \(N = q \lceil \omega^{-2p(\delta)} \rceil\), where \(p(\delta) = \frac{\delta}{1-\delta}\) and \(q = q(\delta, C) \geq 1\) is an integer, which will be specified later in the proof. Under the assumption that \(\omega \leq 1\), \(k(N) \in [q \omega^{-2p(\delta)}, 4q \omega^{-2p(\delta)}]\). Hence,
    \[
        T_{N} \geq \frac{c_{1}}{2\sqrt{q}}\frac{\omega^{p(\delta)}}{ \omega + q^{\delta-1} \omega^{2p(\delta)(1-\delta)} T_{N}^{-1+\delta}}.
    \]

    Set \(X = T_{N}^{-1} \omega^{p(\delta)}\). Then
    \begin{equation}
        X \leq \frac{2\sqrt{q}}{c_{1}} \omega + \frac{2q^{\delta-\frac{1}{2}}}{c_{1}} X^{1-\delta} \omega^{\delta}. \label{eq:freedisk-X}
    \end{equation}

    Let \(Y = \max(X, \omega)\). Then
    \begin{equation}
        Y \leq \frac{2\sqrt{q}}{c_{1}} \omega + \frac{2q^{\delta-\frac{1}{2}}}{c_{1}} \left(\frac{\omega}{Y}\right)^{\delta} Y
        \leq \frac{2\sqrt{q}}{c_{1}} \omega + \frac{2q^{\delta-\frac{1}{2}}}{c_{1}} Y. \label{eq:freedisk-Y}
    \end{equation}

    When \(\delta \in \left[0, \frac{1}{2}\right)\), we take \(q = q(\delta, C) = \left[\left(\frac{4}{c_{1}}\right)^{\frac{2}{1-2\delta}}\right]\) so that \(\frac{2q^{\delta-\frac{1}{2}}}{c_{1}} \leq \frac{1}{2}\). Then we get from \eqref{eq:freedisk-Y} that \(Y \leq \frac{4\sqrt{q}}{c_{1}} \omega\). Hence, \(T_{N} \geq \frac{c_{1}}{4\sqrt{q}} \omega^{-\frac{1-2\delta}{1-\delta}}\) and \(\lambda(D) \leq \frac{8\sqrt{q}}{c_{1}} \omega^{\frac{1-2\delta}{1-\delta}}\).
    Therefore, we can let \(c = \frac{8\sqrt{q}}{c_{1}} \ll_\delta (C+1)^{1+\frac{1}{1-2\delta}}\).
\end{proof}

The following result generalizes Corollary 4 in \pcite{WangRigidity2018}.

\begin{corollary}\label{cor:freedisk}
    Let \(f\) be an area-preserving irrational pseudo-rotation on \(\mathbb{T}^2\) that has \((C, \delta)\)-deviation for some \(C < \infty\) and \(\delta \in \left[0, \frac{1}{2}\right)\). Let \(c \ll_\delta (C+1)^{1+\frac{1}{1-2\delta}}\) be as in \Cref{lem:freedisk}. Assume that there is a lift \(F\) of \(f\) such that \(\rho(F) \in [-2, 2]^{2}\) and \(\left|\rho(F)\right| < (4c)^{-\frac{1-\delta}{1-2\delta}}\), then
    \[
        d_{C^0}(f, \id) \leq r + \max_{x \in \mathbb{T}^2}\Diam(f(B(x,r))),
    \]
    where \(r=(c\left|\rho(F)\right|^{\frac{1-2\delta}{1-\delta}})^{\frac{1}{2}}\).
\end{corollary}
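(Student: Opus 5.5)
The plan is to follow the proof of Corollary 4 in \pcite{WangRigidity2018}, using \Cref{lem:freedisk} in place of its bounded-mean-motion analogue. Fix \(x\in\mathbb{T}^2\) and consider the Euclidean disk \(B(x,r)\subset\mathbb{T}^2\), with \(r=(c\left|\rho(F)\right|^{\frac{1-2\delta}{1-\delta}})^{\frac12}\). The hypothesis \(\left|\rho(F)\right|<(4c)^{-\frac{1-\delta}{1-2\delta}}\) gives \(c\left|\rho(F)\right|^{\frac{1-2\delta}{1-\delta}}<\frac14\), hence \(r<\frac12\). Therefore \(B(x,r)\) is a genuine embedded topological disk, and each component of \(\pi^{-1}(B(x,r))\) is a Euclidean ball of diameter \(2r<1\). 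Its area is \(\pi r^2=\pi c\left|\rho(F)\right|^{\frac{1-2\delta}{1-\delta}}\).

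Next I compare \(\left|\rho(F)\right|\) with \(\omega=\left\|\overline{\omega}\right\|_{\mathbb{T}^2}\). Clearly \(\omega\le\left|\rho(F)\right|\). Since \(\left|\rho(F)\right|<1/2\) (the constant \(c\) may be taken \(\ge 1\), as it is in \Cref{lem:freedisk}), the nearest integer point to \(\rho(F)\) is \(0\), so \(\omega=\left|\rho(F)\right|\). The assumption \(\rho(F)\in[-2,2]^2\) only serves to pin down the lift.

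The main step is to show that \(B(x,r)\) is \emph{not} a free disk. Suppose it were. Then \Cref{lem:freedisk} would give
\[
\lambda(B(x,r))\le c\,\omega^{\frac{1-2\delta}{1-\delta}}=r^2 .
\]
But \(\lambda(B(x,r))=\pi r^2>r^2\), which is a contradiction. (If one wants margin, take \(r\) slightly smaller and let it tend to the stated value; the conclusion is closed under this limit.) Hence \(f(B(x,r))\cap B(x,r)\neq\emptyset\), so there is a point \(y\in B(x,r)\) with \(f(y)\in B(x,r)\).

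It remains to convert this into the displacement bound. Using the triangle inequality,
\[
d(f(x),x)\le d(f(x),f(y))+d(f(y),x)\le \Diam(f(B(x,r)))+r .
\]
Taking the maximum over \(x\) gives the stated bound on \(d_{C^0}(f,\id)\). Here the \(C^0\) distance is measured in \(\mathbb{T}^2\); alternatively, lifting to \(\mathbb{R}^2\) with \(F\), the displacement is small as well because \(\left|\rho(F)\right|\) is small.

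The main obstacle is making the hypotheses of \Cref{lem:freedisk} apply exactly: checking that \(r<\frac12\), so the disk and its lifts have diameter less than \(1\), and that \(\omega\) coincides with \(\left|\rho(F)\right|\). Both follow from the smallness assumption on \(\left|\rho(F)\right|\).
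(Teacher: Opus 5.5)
Your proof is correct and is essentially the paper's argument: the smallness hypothesis gives $r<\tfrac12$, then $\lambda(B(x,r))=\pi r^2>r^2\ge c\,\omega^{\frac{1-2\delta}{1-\delta}}$, so $B(x,r)$ is not free by \Cref{lem:freedisk}, and the triangle inequality yields the bound. Only $\omega\le|\rho(F)|$ is needed, so your equality $\omega=|\rho(F)|$ is harmless but superfluous; otherwise you are just making explicit details the paper leaves implicit, namely the diameter of the lifted balls and the final triangle-inequality step.
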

\begin{proof}
    From the condition, we have \(r < \frac{1}{2}\). Then for any \(x\in\mathbb{R}^2\),
    \[
        \lambda(B(\pi(x),r))>r^2=c\left|\rho(F)\right|^{\frac{1-2\delta}{1-\delta}}.
    \]
    By \Cref{lem:freedisk}, we have \(B(\pi(x),r)\cap f(B(\pi(x),r))\neq\emptyset\). Then the corollary follows immediately.
\end{proof}

\subsection{General pseudo-rotation case}\label{sec:p-rot:gen}

Using \Cref{cor:freedisk} above and the arithmetic condition, we can now control the \(C^0\)-distance and prove \Cref{thm:rigid-pseudo}.

\begin{proof}[Proof of \Cref{thm:rigid-pseudo}]

    By the strong super-Liouvillean condition, there exists an increasing sequence \(\{n_j\}_{j>0}\in\mathbb{N^{+}}\) such that
    \[
        \left\|n_j\vec{\omega}\right\|_{\mathbb{T}^2}^{a^{n_j}} \le e^{-jn_j}.
    \]
    Since \(\rho(f) = \vec{\omega} \mod \mathbb{Z}^2\), for each \(j\) we can choose a lift \(F_j\) of \(f^{n_j}\) such that \(\left|\rho(F_j)\right|=\left\|n_j\vec{\omega}\right\|_{\mathbb{T}^2}\).

    By hypothesis, \(f\) is H\"older continuous with exponent \(a\), so there exists \(1 \le M < \infty\) such that \(\left\|f^m(x) - f^m(y)\right\|_{\mathbb{T}^2} \leq M^m \left\|x-y\right\|_{\mathbb{T}^2}^{a^m}\) for all \(x,y \in \mathbb{T}^2\).

    Moreover, because \(f\) has \((C, \delta)\)-deviation, \(f^{n_j}\) has \((Cn_j^{\delta}, \delta)\)-deviation. By \Cref{cor:freedisk}, for sufficiently large \(j\), we have
    \begin{align*}
        d_{C^0}(f^{n_j}, \id)
         & \le c^\frac{1}{2}\left|\rho(F_j)\right|^{\frac{1-2\delta}{2(1-\delta)}}
        + M^{n_j}\left(2c^{\frac{1}{2}}\left|\rho(F_j)\right|^{\frac{1-2\delta}{2(1-\delta)}}\right)^{a^{n_j}}                                                 \\
         & \ll n_j^{\frac{\delta(1-\delta)}{1-2\delta}}\left\|n_j\vec{\omega}\right\|_{\mathbb{T}^2}^{\frac{1-2\delta}{2(1-\delta)}}
        + M^{n_j}\left(2n_j^{\frac{\delta(1-\delta)}{1-2\delta}}\left\|n_j\vec{\omega}\right\|_{\mathbb{T}^2}^{\frac{1-2\delta}{2(1-\delta)}}\right)^{a^{n_j}} \\
         & \ll e^{\frac{\delta(1-\delta)}{1-2\delta}\ln n_j - \frac{1-2\delta}{2(1-\delta)}j n_j}
        +e^{\left(\ln M\right) n_j + \frac{\delta(1-\delta)}{1-2\delta}\ln n_j - \frac{1-2\delta}{2(1-\delta)}j n_j}                                           \\
         & \ll e^{-\frac{1}{2}jn_j} + e^{-\frac{1-2\delta}{4(1-\delta)}jn_j} .
    \end{align*}
    Therefore, \(f\) is \(C^0\)-rigid with an exponential decay rate.
\end{proof}

\subsection{Semi-irrational case}\label{sec:p-rot:semi}

To prove \Cref{thm:rigid-pseudo-semi}, we need the following two results in \pcite{AvilaMixing2020}:

\begin{lemma}[{\pcite[Lemma 3.2]{AvilaMixing2020}}]
    \label{lem:non_brjuno}
    Suppose that \(\alpha\in \mathbb{R}\setminus\mathbb{Q}\) is an irrational non-Brjuno number. For any \(H>1\), there exists a subsequence \(\{q_{n_j}\}_{j\geq 0}\) of the sequence \(\{q_n(\alpha)\}_{n\geq 0}\) such that \(q_{n_{j+1}}\geq H^{q_{n_j}}\) and there exists an infinite set \(\mathcal{J}\) such that
    \begin{equation}\label{ac}
        \text{for any } j\in\mathcal{J}, \quad \left\|q_{n_j}\alpha\right\|_{\mathbb{T}}<e^{-\frac{q_{n_j}}{j^2}}.
    \end{equation}
\end{lemma}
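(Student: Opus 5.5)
The plan is to prove the statement from the continued fraction expansion of \(\alpha\) and the non-Brjuno divergence \(\sum_n \frac{\ln q_{n+1}}{q_n}=+\infty\). There are two steps. First, thin out the convergent denominators so that they grow at least like \(H^{q}\). Second, use the divergence to show that along this sparse subsequence, infinitely often the next denominator is huge compared with \(e^{q_{n_j}/j^2}\); then \(\|q_{n_j}\alpha\|_{\mathbb{T}}\le 1/q_{n_j+1}\) gives \eqref{ac}.

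\textbf{Step 1 (construction).} Define \(n_0=0\) and inductively let \(n_{j+1}\) be the smallest index \(m>n_j\) with \(q_m\ge H^{q_{n_j}}\). Such an \(m\) exists since \(q_m\to\infty\). By minimality, \(q_{m'}<H^{q_{n_j}}\) for every \(n_j<m'<n_{j+1}\).

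\textbf{Step 2 (splitting the Brjuno sum).} Write the sum as a sum over blocks \(n_j\le n<n_{j+1}\), and split each block into two parts.

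The first part consists of the indices \(n_j<n<n_{j+1}\). Here \(q_{n+1}\le q_{n_{j+1}}\). Moreover \(q_{n+1}\le (a_{n+1}+1)q_n\) and \(q_n<H^{q_{n_j}}\). The growth \(q_{n+2}\ge 2q_n\) makes \(\sum 1/q_n\) geometric, so crude estimates of the form \(\sum \frac{\ln q_{n+1}}{q_n}\lesssim \sum \frac{\ln q_n+\ln a_{n+1}}{q_n}\) need care. Instead, I would use the standard fact (Yoccoz/Brjuno) that \(\sum_{n:\,q_{n+1}\le q_n^{A}}\frac{\ln q_{n+1}}{q_n}\le A\sum_n\frac{\ln q_n}{q_n}<\infty\), since \(\ln q_n\) grows at least linearly in \(n\). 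So the divergence must come from the indices with \(q_{n+1}>q_n^{A}\).

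This motivates the cleaner route: show that the terms with \(n=n_j\) diverge, i.e. \(\sum_j \frac{\ln q_{n_j+1}}{q_{n_j}}=\infty\). For \(n_j<n<n_{j+1}\) we have \(q_n<H^{q_{n_j}}\) and \(q_n\ge q_{n_j+1}\). A term \(\frac{\ln q_{n+1}}{q_n}\) with \(q_{n+1}\ge H^{q_{n_j}}\) can only occur at the last index \(n=n_{j+1}-1\). That term is bounded by \(\frac{\ln q_{n+1}}{q_n}\), which is not obviously controlled either. This is the main obstacle.

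The first fix to try is to choose \(n_{j+1}\) differently: take \(n_{j+1}\) to be the index maximizing the contribution, or include \(n_{j+1}-1\) itself in the subsequence. Concretely, redefine \(n_{j+1}\) as the largest \(m\) with \(q_m<H^{q_{n_j}}\)-type thresholds shifted by one, namely the first \(m>n_j\) with \(q_m\ge H^{q_{n_j}}\), minus one when that keeps the growth \(q_{n_{j+1}}\ge H^{q_{n_{j}}}\) relative to the previous element. Alternating the thresholds, e.g. using \(H^{q}\) versus \(H^{2q}\), should ensure that the big-jump index belongs to the subsequence. Then all remaining terms satisfy \(\ln q_{n+1}\le q_{n_j}\ln H+O(\ln q_n)\) with \(q_n\ge q_{n_j+1}\). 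Their total is \(\ll\sum_j \frac{q_{n_j}\ln H}{q_{n_j+1}}+\sum_n\frac{\ln q_n}{q_n}\), and the first sum is finite except along \(j\) where \(q_{n_j+1}\) is small, which again means a subsequence term is large.

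\textbf{Step 3 (extracting \(\mathcal{J}\)).} Once \(\sum_j \frac{\ln q_{n_j+1}}{q_{n_j}}=\infty\) is established, suppose for contradiction that \(\ln q_{n_j+1}\le q_{n_j}/j^2\) for all large \(j\). Then the sum is bounded by \(\sum_j j^{-2}<\infty\), a contradiction. Hence infinitely many \(j\) satisfy \(q_{n_j+1}>e^{q_{n_j}/j^2}\), and for these \(\|q_{n_j}\alpha\|_{\mathbb{T}}\le 1/q_{n_j+1}<e^{-q_{n_j}/j^2}\). The delicate point throughout is Step 2: making sure the selection rule captures all the divergent mass while keeping the growth condition \(q_{n_{j+1}}\ge H^{q_{n_j}}\).
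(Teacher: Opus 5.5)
The paper gives no proof of this lemma; it quotes it verbatim from Avila--Fayad--Le Calvez--Xu--Zhang. Judged on its own, your proposal has a genuine gap at Step 2, and the subsequence you build in Step 1 cannot work. Your Step 3 is fine, but it needs a tower-growing subsequence along which \(\ln q_{n_j+1}>q_{n_j}/j^2\) infinitely often, and the greedy subsequence can fail this for a non-Brjuno \(\alpha\).

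Here is a counterexample. At a greedy index \(g=n_j\), prescribe \(a_{g+1}=1\) (so \(q_{g+1}\le 2q_g\)), take \(a_{g+2}\) maximal with \(q_{g+2}<H^{q_g}\), and set \(a_{g+3}=1\). Then \(q_{g+3}=q_{g+2}+q_{g+1}\ge H^{q_g}\), so \(n_{j+1}=g+3\). Repeating this pattern gives \(\ln q_{g+2}/q_{g+1}\ge (\ln H)/2-o(1)\) in every block, so \(\alpha\) is non-Brjuno. Yet \(\ln q_{n_j+1}\le\ln(2q_{n_j})<q_{n_j}/j^2\) for all large \(j\), so \eqref{ac} fails along the whole greedy subsequence.

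The divergent mass sits at the interior index \(n_j+1\), where \(q_{n+1}<H^{q_{n_j}}\). So your remark that only \(n=n_{j+1}-1\) can have \(q_{n+1}\ge H^{q_{n_j}}\) does not locate it. Your heuristic that divergence of \(\sum_j q_{n_j}\ln H/q_{n_j+1}\) means "a subsequence term is large" is also backwards: a small ratio \(q_{n_j+1}/q_{n_j}\) makes \(\ln q_{n_j+1}/q_{n_j}\) tiny. The "alternating thresholds" repair is not an actual construction.

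The missing idea is to use the greedy indices \(g_k\) only to cut the indices into finite blocks \([g_k,g_{k+1})\), and to compare with the block number \(k(n)\) rather than forcing the greedy sequence itself to carry the divergence. Write \(\Phi(n)=\ln q_{n+1}/q_n\). For \(g_k<n<g_{k+1}-1\) you have \(\ln q_{n+1}<q_{g_k}\ln H\) and \(q_n\ge 2^{\lfloor (n-g_k-1)/2\rfloor}q_{g_k}\). Hence \(\Phi(n)\le \ln H\cdot 2^{-\lfloor (n-g_k-1)/2\rfloor}\) decays geometrically inside the block.

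It follows that the terms of block \(k\) with \(\Phi(n)\le 9/k^2\), including the two endpoint terms, sum to \(O_H(\ln k/k^2)\), which is summable in \(k\). Since \(\sum_n\Phi(n)=\infty\), infinitely many blocks contain some \(n\) with \(\Phi(n)>9/k(n)^2\).

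Now take one index from every other block: \(n_j\in[g_{2j+\epsilon},g_{2j+\epsilon+1})\), where the parity \(\epsilon\) is chosen so that infinitely many such blocks contain a good index, and take a good one whenever available. Because \(n_j<g_{2j+\epsilon+1}\), you get \(q_{n_{j+1}}\ge q_{g_{2j+\epsilon+2}}\ge H^{q_{g_{2j+\epsilon+1}}}>H^{q_{n_j}}\). Because \(k(n_j)\le 3j\), a good \(n_j\) gives \(\ln q_{n_j+1}>q_{n_j}/j^2\), hence \(\|q_{n_j}\alpha\|_{\mathbb{T}}<1/q_{n_j+1}<e^{-q_{n_j}/j^2}\). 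Your proposal lacks exactly this freedom: placing an arbitrary element of a block into the subsequence at an index comparable to its block number.
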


\begin{theorem}[{\pcite[Theorem C]{AvilaMixing2020}}]
    \label{h}
    Let \(S\) be a compact orientable surface.
    For any compact subset \(K\subset \mathrm{Diff}^2(S, \mathrm{Vol})\), there exist \(0<\theta<1\) and integer \(H>0\) satisfying the following property: let
    \(\{Q_n\}_{n\geq 0} \subset \mathbb{N}\) be a sequence such that \(Q_0\geq H\) and \(Q_n\geq H^{Q_{n-1}}\) for any \(n \geq 1\), if for some \(f\in K\) there exists \(n\geq 0\) such that
    \begin{equation}\label{hypf}
        \frac{1}{Q_n}\ln \left\|Df^{Q_n}\right\|>\theta^n,
    \end{equation}
    then \(f\) has a hyperbolic periodic point.
\end{theorem}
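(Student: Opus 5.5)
This statement is quoted from \pcite{AvilaMixing2020}, and the paper applies it as a black box. If I had to reprove it, the plan would be a quantitative version of Katok's closing argument for \(C^{1+\alpha}\) area-preserving surface diffeomorphisms: exponential growth of \(Df^{Q_n}\) at a definite rate yields a hyperbolic periodic point.

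The guiding fact is qualitative. If an area-preserving \(C^2\) surface diffeomorphism has no hyperbolic periodic point, then by Katok's theorem every ergodic invariant measure has zero Lyapunov exponents. By the subadditive ergodic theorem together with the semi-uniform convergence for subadditive cocycles, this gives \(\frac{1}{m}\ln\|Df^m\|\to 0\) uniformly on \(S\). The task is to make this effective, uniformly over the compact set \(K\), along the sparse times \(Q_n\).

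\textbf{Step 1: an effective closing lemma.} I would fix \(f\in K\) with \(\frac{1}{Q}\ln\|Df^Q\|>\chi\), where \(Q=Q_n\) and \(\chi=\theta^n\). The aim is a bound of the following shape. Let \(Q\) be larger than an explicit function \(\Phi_K(\chi)\), roughly \(\exp(\mathrm{const}_K/\chi)\). Then there is a point \(x\) and a time \(m\le Q\) such that \(\|Df^m(x)v\|\) grows with rate about \(\chi/2\) along a uniform cone field over a Pesin-type block. Moreover \(f^m(x)\) returns close to \(x\) at a scale at which the \(C^2\) bounds of \(K\) keep the cones invariant under the perturbative graph transform. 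A hyperbolic fixed point of \(f^m\) then follows from the usual contraction-mapping (Anosov closing) argument. Three ingredients go into this step:
\begin{itemize}
\item a pigeonhole over the orbit segment to extract a \((\chi/2)\)-hyperbolic time block, in the sense of Pliss' lemma;
\item area preservation, to get recurrence among such hyperbolic times;
\item the uniform bound \(\sup_{f\in K}\|f\|_{C^2}\), which controls the size of the Pesin charts polynomially in the hyperbolicity constants.
\end{itemize}
The outcome would be that the sizes degrade like \(e^{-\mathrm{const}/\chi}\).

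\textbf{Step 2: choosing \(\theta\) and \(H\).} These come from \(K\) so that the hypothesis \(Q_n\ge H^{Q_{n-1}}\) guarantees \(Q_n\ge\Phi_K(\theta^n)\). Because the \(Q_n\) grow as an iterated exponential while \(\Phi_K(\theta^n)\) is only \(\exp(\mathrm{const}\cdot\theta^{-n})\), this inequality holds for every \(n\) once \(H\) is large, including the base case \(Q_0\ge H\). Step 1 then applies at the index \(n\) where the hypothesis \eqref{hypf} holds.

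\textbf{Main obstacle.} The hard part is Step 1, specifically extracting a recurrent hyperbolic block from a single large norm \(\|Df^Q\|\) with explicit, \(K\)-uniform dependence on \(\chi\). A large operator norm at one point does not by itself give uniform cones along the whole orbit. I would try first to average the potential \(\frac{1}{Q}\ln\|Df^Q\|\) into an invariant measure via Krylov--Bogolyubov and use the Oseledets splitting. However, weak-\(*\) limits lose the quantitative rate, so I expect to need a direct finite-time argument instead: Pliss-lemma hyperbolic times, then a cone criterion whose error terms are controlled by \(\|f\|_{C^2}\).
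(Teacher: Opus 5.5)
\textbf{There is a genuine gap, and it sits exactly where you put the ``main obstacle''.} For context, the paper does not prove this statement: it quotes it from \pcite[Theorem C]{AvilaMixing2020} and uses it as a black box, so there is no in-paper argument to compare with. Your Step~1 is the entire content of the theorem, and it is only asserted. It is also much stronger than the theorem. Unwinding the hypothesis, the theorem detects a rate $\chi=\theta^n$ only at times of tower height about $\log_{1/\theta}(1/\chi)$. You instead claim a threshold $\Phi_K(\chi)\approx e^{C_K/\chi}$ from a large norm at a single point, and nothing in the sketch produces this.

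The ingredients you list also do not combine as described.

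(i) Area preservation gives recurrence only for sets of positive measure. From one point $x$ with $\|Df^Q(x)\|>e^{\chi Q}$, the $C^2$ bounds guarantee comparable growth only on a ball of radius $e^{-O(Q)}$. Kac's lemma then gives returns only on time scales $e^{O(Q)}\gg Q$, which lie outside the window where you control the derivative. Inside $[0,Q]$ you have only pigeonhole. That gives a return at an uncontrolled time $m$, at a spatial scale at best polynomially small in $Q$.

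(ii) Closing such a return requires the cone condition for $Df^m(y)$ at all $y$ near the returning point. Naive $C^2$ control of $f^m$ needs closeness $e^{-O(m)}$, which pigeonhole cannot supply. Uniform-size Pesin-type charts would avoid this, but their size is not governed by $\chi$ and $\sup_K\|f\|_{C^2}$ alone. It depends on the non-uniformity of the hyperbolicity along the whole segment and on the angle between the contracted and expanded directions at the return. Pliss times give one-sided control of one direction only. You would still need:
\begin{itemize}
\item a forward-good time for the stable cone at the first visit;
\item a backward-good time for the unstable cone at the second visit;
\item transversality at the return;
\item control through the bad stretches in between.
\end{itemize}
A single large norm gives none of this, and a ``Pesin-type block'' is not available for one finite orbit.

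\textbf{A more promising route.} You treat the growth condition $Q_n\ge H^{Q_{n-1}}$ as bookkeeping, but it is the natural handle for an effective, scale-by-scale argument.

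Your guiding fact plus compactness of $K$ already gives the case $n=0$. Hyperbolic periodic points persist under $C^1$ perturbation, so a $C^2$ limit of maps in $K$ without them has none, and hence all its exponents vanish. Continuity and subadditivity then give $\Lambda(N):=\max_x\frac1N\ln\|Df^N(x)\|\le 1$ for all $N\ge H$, uniformly over the maps in $K$ without hyperbolic periodic points.

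What remains is a gain by a fixed factor from one scale to the next: show that $\Lambda(Q_n)\le\theta\Lambda(Q_{n-1})$ when there is no hyperbolic periodic point. If this fails, cutting the orbit into blocks of length $Q_{n-1}$ yields Pliss blocks whose density is bounded below in terms of $\theta$ alone. In your one-scale set-up that density degenerates like $\chi/\ln\|Df\|$. On balls of radius $e^{-O(Q_{n-1})}$ the map $f^{Q_{n-1}}$ is essentially linear by the $C^2$ bounds. The gap $Q_n\ge H^{Q_{n-1}}$ is what forces returns among these blocks at that scale. The closing argument at block level, including control across the bad blocks, is where the real work lies. Your proposal supplies the correct qualitative input but no argument for its effective version.
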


\begin{proof}[Proof of \Cref{thm:rigid-pseudo-semi}]
    We apply \Cref{lem:semi-rot} to \(f\) and obtain \(A\), \(\ell\) and \(f' = \overline{A} f^{\ell} \overline{A}^{-1}\), such that \(\rho(f')=(\ell\beta,0)\mod \mathbb{T}^2\), where \(\beta = \left\|\mathcal{F}(\overline{\omega})\right\|_{\mathbb{T}}\) is of non-Brjuno type and \(f'\) has \((\left\|A\right\|C\ell^\delta,\delta)\)-deviation. It is enough to show that \(f'\) is \(C^{k-1}\)-rigid.

    We denote by \(\{q_n\}_{n\geq0}\) the sequence of denominators for \(\beta\). Since \(f\) is a pseudo-rotation, \(f\) has no periodic point. Therefore, \(f'\) has no periodic point, and we can apply \Cref{h} to \(f'\). Let  \(\theta\) and  \(H\) be given by \Cref{h} for \(S = \mathbb{T}^2\) and \(K = \{f'\}\). After applying \Cref{lem:non_brjuno}  to \(H\) and \(\beta\), we obtain a subsequence \(\{q_{n_j}\}_{j\in \mathbb{N}} \subset \{q_n\}\) which satisfies \(q_{n_0} \geq H\), \(q_{n_{j+1}} \geq H^{q_{n_j}}\) for any \(j \in \mathbb{N}\), and there exists an infinite set \(\mathcal{J}\subset\mathbb{N}^{+}\) such that
    \begin{equation*}
        \left\|\rho((f')^{q_{n_{j}}})\right\|_{\mathbb{T}^2}=\left\|q_{n_{j}}\ell \beta\right\|_{\mathbb{T}}\leq \ell e^{-\frac{q_{n_{j}}}{j^2}}\quad\text{for every }j\in\mathcal{J}.
    \end{equation*}

    By \Cref{h}, there exists \(j_0\ge 1\) such that
    \begin{equation}\label{eq:derivative}
        \left\|D((f')^{q_{n_j}})\right\|\le e^{\theta^j q_{n_j}}\le e^{\frac{q_{n_j}}{j^3}}\text{ for all }j\ge j_0.
    \end{equation}

    For sufficiently large \(0 < j\in\mathcal{J}\), let \(\tilde{f}_j\) be a lift of \((f')^{q_{n_j}}\) such that \(\left|\rho(\tilde{f}_j)\right|=\left\|q_{n_{j}}\ell \beta\right\|_{\mathbb{T}}\). Then by \Cref{cor:freedisk}, we obtain \(r=(c_j\left|\rho(\tilde{f}_j)\right|^{\frac{1-2\delta}{1-\delta}})^{\frac{1}{2}}\), where \(c_j\ll_{\delta} (\left\|A\right\|\ell^{\delta} q_{n_j}^{\delta}C+1)^{1+\frac{1}{1-2\delta}}\) such that

    \begin{align*}
        d_{C^0}((f')^{q_{n_j}}, \id)
         & \leq r + \max_{x \in \mathbb{T}^2}\Diam((f')^{q_{n_j}}(B(x,r))),                                                                       \\
         & \leq r + 2\left\|D((f')^{q_{n_j}})\right\|r                                                                                            \\
         & \ll (1+2e^{\frac{q_{n_j}}{j^3}})q_{n_j}^{\frac{\delta(1-\delta)}{1-2\delta}}e^{-\frac{q_{n_j}}{j^2}\cdot\frac{1-2\delta}{2(1-\delta)}} \\
         & \ll e^{-\frac{q_{n_j}}{j^{\frac{5}{2}}}}.
    \end{align*}

    We next estimate the higher derivatives of \((f')^{q_{n_j}}\). We claim that for
    every $1\le m\le k$,
    \[
        \|D^m((f')^{q_{n_j}})\|\le e^{C_mq_{n_j}/j^3}
    \]
    for some constant $C_m>0$. To prove this, we need inductions on \(m\) and \(j\).

    By \eqref{eq:derivative}, the claim holds for \(m=1\). Fix \(2\le m\le k\). By the Faà di Bruno formula, we have for every $C^m$ map $F_1, F_2$,
    \[
        D^m(F_1\circ F_2)(x)=\sum_{\pi\in\Pi_m}D^{\left|\pi\right|}F_1(F_2(x))\left(\prod_{B\in\pi}D^{\left|B\right|}F_2(x)\right),
    \]
    where \(\Pi_m\) denotes the set of all partitions of \(\{1,\ldots,m\}\), and \(\left|\pi\right|\) (resp. \(\left|B\right|\)) denotes the cardinality of \(\pi\) (resp. \(B\)). Using this formula and the induction on \(n\), we have the following two formulas for every \(C^m\) map \(F\) and \(n \ge 1\),
    \[
        \left\|D^m(F^n)\right\|\le n\max\{1, \left\|DF\right\|^{(n-1)m}\}\left(M_{1,n-1}\left\|D^m F\right\| + \left(B_m M_{2,m-1}\right)^{mn}\right),
    \]
    and
    \[
        \left\|D^m(F^n)\right\|\le \left(B_m M_{2,m}\right)^{mn},
    \]
    where \(B_m = \#\Pi_m\) denotes the \(m\)-th Bell number and
    \[
        M_{1,n} = \max\left\{1,\left\|DF\right\|, \ldots, \left\|D\left(F^{n}\right)\right\|\right\},
    \]
    \[
        M_{2,m} = \max\left\{1,\left\|DF\right\|,\ldots, \left\|D^{m}F\right\|\right\}.
    \]

    Set $G=f'$ and $Q_j=q_{n_j}$. Assume that we have already proved that there exist increasing constants \(C_{m'}>0\) such that
    $\|D^{m'}(G^{Q_j})\|\le e^{C_{m'}Q_j/j^3}$ for every \(1\le m' <m\). Write $Q_j=a_jQ_{j-1}+r_j$, where $0\le r_j<Q_{j-1}$. We let
    \[
        M'_{1,n,j} = \max_{0\le k\le n}\left\|D\left(G^{kQ_j}\right)\right\| \le e^{nQ_j/j^3}\text{ for all }n\ge 1,j\ge j_0,
    \]
    \[
        M'_{2,j}=\max\{1,\|D\left(G^{Q_{j}}\right)\|,\ldots,\|D^{m-1}G^{Q_{j}}\|\}\le e^{C_{m-1}Q_j/j^3}\text{ for all }j\in\mathbb{N}^{+},
    \]
    and
    \[
        M'_3=\max\{1,\|DG\|,\ldots,\|D^{m}G\|\}.
    \]
    Since $Q_j>H^{Q_{j-1}}$, we have $Q_{j-1}+\ln Q_j=o(Q_j/j^3)$. Thus, for some $C_m>\max\{8+8m, 8m + 8mC_{m-1}\}$ and sufficiently large $j$,
    the induction hypothesis $\|D^m(G^{Q_{j-1}})\|
        \le e^{C_m Q_{j-1}/(j-1)^3}$ gives
    \begin{align*}
        \|D^m(G^{Q_j})\|
         & \le\sum_{\pi\in\Pi_m}\left\|D^{\left|\pi\right|}G^{r_j}\right\|\left(\prod_{B\in\pi}\left\|D^{\left|B\right|}G^{a_jQ_{j-1}}\right\|\right)                                                            \\
         & \le \sum_{\pi\in\Pi_m}\left(B_{\left|\pi\right|}{M'_3}\right)^{\left|\pi\right|r_j}\left(\prod_{B\in\pi}\left\|D^{\left|B\right|}G^{a_jQ_{j-1}}\right\|\right)                                        \\
         & \le \left(B_{m}M'_3\right)^{mQ_{j-1}}\left(\left\|D^{m}G^{a_jQ_{j-1}}\right\|+\sum_{\stackrel{\pi\in\Pi_m}{\left|\pi\right|>1}}\prod_{B\in\pi}\left\|D^{\left|B\right|}G^{a_jQ_{j-1}}\right\|\right).
    \end{align*}

    Using the formula above, the first term in the parenthesis is
    \begin{align*}
            & \left\|D^{m}G^{a_jQ_{j-1}}\right\|                                                                                                                                \\
        \le & a_j\max\left\{1, \left\|D\left(G^{Q_{j-1}}\right)\right\|^{m(a_j-1)}\right\} \cdot \\ & \left(M'_{1,a_j-1, j-1}\left\|D^m \left(G^{Q_{j-1}}\right)\right\| + \left(B_m M'_{2,j-1}\right)^{ma_j}\right) \\
        \le & a_j e^{8mQ_{j}/j^3}\left(e^{8Q_j/j^3 + 8C_mQ_{j-1}/j^3} + B_m^{ma_j} e^{8mC_{m-1}Q_{j}/j^3}\right).
    \end{align*}

    For every \(\pi\in\Pi_m\) with \(\left|\pi\right|>1\), every block \(B\in\pi\) satisfies \(\left|B\right|<m\).
    Therefore, the second term in the parenthesis is
    \begin{align*}
        \sum_{\stackrel{\pi\in\Pi_m}{\left|\pi\right|>1}}\prod_{B\in\pi}\left\|D^{\left|B\right|}G^{a_jQ_{j-1}}\right\|
         & \le \sum_{\stackrel{\pi\in\Pi_m}{\left|\pi\right|>1}}\prod_{B\in\pi}\left(B_m M'_{2,j-1}\right)^{\left|B\right|a_j} \\
         & \le \sum_{\stackrel{\pi\in\Pi_m}{\left|\pi\right|>1}}\left(B_m M'_{2,j-1}\right)^{m a_j}                            \\
         & \le B_m^{m a_j + 1}{M'_{2,j-1}}^{m a_j}                                                                             \\
         & \le B_m^{ma_j+1}e^{8mC_{m-1}Q_{j}/j^3}.
    \end{align*}
    Therefore, we have
    \[
        \|D^m(G^{Q_j})\|\le e^{C_mQ_j/j^3}.
    \]
    This completes the induction on \(m\), and hence proves the claim.

    Then by the standard interpolation inequality on \(\mathbb{T}^2\), we have
    \[
        d_{C^{k-1}}((f')^{q_{n_j}},\id)\ll d_{C^0}((f')^{q_{n_j}},\id)^{\frac{1}{k}}d_{C^k}((f')^{q_{n_j}},\id)^{\frac{k-1}{k}}\ll e^{-c'_k \frac{q_{n_j}}{j^{\frac{5}{2}}}}
    \]
    for some \(c'_k>0\).

    Since \(q_{n_{j+1}} \geq H^{q_{n_j}}\) for any \(j \in \mathcal{J}\), we have \(q_{n_{j}}^{\frac{1}{2}} \geq 2^{\frac{j}{2}} \geq j^{\frac{5}{2}}\) for sufficiently large \(j\in\mathcal{J}\). Therefore, \(d_{C^{k-1}}((f')^{q_{n_j}},\id)\) decays superpolynomially and \Cref{thm:rigid-pseudo-semi} is proved.
\end{proof}

\section{Results on skew products}\label{sec:skew}

In the current section, we consider the rigidity of skew products \(T_{\alpha, h}\):
\begin{align*}
    T_{\alpha,h}: \mathbb{T}^2 & \to \mathbb{T}^2                \\
    (x,y)                      & \mapsto (x + \alpha, y + h(x)).
\end{align*}
where \(\alpha\) is irrational, \(h: \mathbb{T} \to \mathbb{T}\) is a H\"older continuous function with exponent \(a \in \left(0,1\right]\) and \(\degree(h)=0\).

Let \(\tilde{\alpha}\in\opclint{-\frac{1}{2}, \frac{1}{2}}\) be the representative of \(\alpha\), i.e., \(\tilde{\alpha}\mod\mathbb{Z} \equiv \alpha \). Then \(\left|\tilde{\alpha}\right|=\left\|\alpha\right\|_{\mathbb{T}}\).

Since \(\deg(h)=0\), there exists a lift
\(\tilde{h}_0:\mathbb{T}^1\to\mathbb{R}\) satisfying \( h=\pi_{\mathbb T}\circ \tilde{h}_0 \).
Then all lifts of \(h\) form a set \(\{\tilde{h}_0 + k:k\in\mathbb{Z}\}\), and we can choose a lift \(\tilde{h}\) such that \(\left|\int_{\mathbb{T}}\tilde{h}\right|=\min_{k\in\mathbb{Z}}\left|\int_{\mathbb{T}}\tilde{h}_0+k\right|\).

We then choose the lift \(\tilde T_{\tilde{\alpha},\tilde{h}}:\mathbb{R}^2\to\mathbb{R}^2\) for \(T_{\alpha, h}\) by
\[
    \tilde T_{\tilde{\alpha},\tilde{h}}(x,y) = (x+\tilde{\alpha}, y + \tilde{h}(x)),
\]
where \(\tilde{h}(x)\) is viewed as a \(1\)-periodic function on \(\mathbb{R}\).

Consequently, the rotation vector of \(\tilde{T}_{\tilde{\alpha}, \tilde{h}}\) satisfies
\[
    \rho(\tilde{T}_{\tilde{\alpha}, \tilde{h}}) = (\tilde{\alpha}, \int_{\mathbb{T}}\tilde{h}(x)dx) \in [-\frac{1}{2}, \frac{1}{2}]^2
    \qquad\text{and}\qquad
    \left|\rho(\tilde{T}_{\tilde{\alpha}, \tilde{h}})\right| = \left\|\rho(T_{\alpha, h})\right\|_{\mathbb{T}^2}.
\]

For simplicity, we denote \(\rho(\tilde{T}_{\tilde{\alpha}, \tilde{h}})\) by \(\vec{\omega} = (\omega_1, \omega_2)\) and let \(\omega = \left|\rho(\tilde{T}_{\tilde{\alpha}, \tilde{h}})\right|\). It is obvious that \(\omega < 1\).

Since the skew product we consider is a pseudo-rotation, we can use a similar method to \Cref{lem:freedisk} and \Cref{cor:freedisk}. Due to the simplicity of a skew product, we can obtain the connection between free disk and the length of rotation vector for a wider range of \(\delta\).

\begin{lemma}\label{lem:freedisk-skew}
    Assume \(T_{\alpha, h}\) has \((C, \delta)\)-deviation.
    Let \(D\) be a free disk on \(\mathbb{T}^{2}\) for \(T_{\alpha, h}\) such that each connected component of \(\pi^{-1}(D)\) has diameter less than \(\frac{1}{2}\), then there exists \(c\ll_{\delta} C+1\) such that the area \(\lambda(D)\) of \(D\) is no more than \(c \omega^{1-\delta}\).
\end{lemma}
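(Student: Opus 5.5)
The plan is to redo the argument of \Cref{lem:freedisk}, but to replace the global pigeonhole bound $|\sum l_D|\ge\sqrt{m/c_0}$ by a count of returns inside short time windows. This is possible because the deviation condition is uniform over starting points: for every $x\in\mathbb{R}^2$ and every $L$, $|\tilde T^L(x)-x-L\vec\omega|\le L\Delta_L\le C L^{\delta}$. Throughout, let $\tilde T=\tilde T_{\tilde\alpha,\tilde h}$, let $\tilde D$ be a component of $\pi^{-1}(D)$ (diameter $<\frac12$), and take $s$, $n_D$, $f_D$, $l_D$ exactly as in the proof of \Cref{lem:freedisk}. Set $S_j(z)=\sum_{i<j}l_D(f_D^i(z))$ and $t_j(z)=\sum_{i<j}n_D(f_D^i(z))$, so that by \eqref{eq:displacement}, $\tilde T^{t_j}(s(z))\in S_j+\tilde D$.

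\emph{Step 1: no repeated lattice shifts.} For $z\in\Recp\cap D$ and indices $k_1<k_2$, we have $S_{k_1}(z)\neq S_{k_2}(z)$. Otherwise, choosing a minimal pair as in the proof of \Cref{lem:freedisk}, the translates $\tilde D+S_j$ for $k_1\le j\le k_2$ form a periodic free disk chain for $\tilde T$. Franks' lemma then gives a fixed point of $\tilde T$, hence a rotation vector in $\mathbb{Z}^2$, contradicting irrationality of $\alpha$. Thus the $S_j(z)$ are pairwise distinct points of $\mathbb{Z}^2$.

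\emph{Step 2: window count.} Let $L=\lceil\omega^{-1}\rceil$, and fix a time $t$. The returns $j$ with $t_j\in[t,t+L)$ have lifted positions $\tilde T^{t_j}(s(z))$ that all lie within distance $|u\vec\omega|+Cu^{\delta}\le 2+CL^{\delta}\ll (C+1)\omega^{-\delta}$ of the single point $\tilde T^t(s(z))$; here $u=t_j-t<L$ and we use the uniform deviation bound. (The $x$-coordinate is in fact an exact rotation, so it moves by at most $L|\tilde\alpha|\le 2$.) Since $\operatorname{Diam}\tilde D<\frac12$, the corresponding lattice points $S_j$ lie in a disk of radius $\ll (C+1)\omega^{-\delta}$. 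Such a disk contains $\ll (C+1)^2\omega^{-2\delta}$ lattice points, which is not good enough. To improve this, I would split the window differently: the box in question has width $O(1)$ in the $x$-direction, because the horizontal motion is exactly $u\tilde\alpha$ with $|u\tilde\alpha|\le 2$, and height $O((C+1)\omega^{-\delta})$ in the $y$-direction. So it contains only $K\ll (C+1)\omega^{-\delta}$ lattice points. By Step 1, the number of returns in any window of length $L$ is at most $K$.

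\emph{Step 3: Kac.} Covering $[0,n)$ by $\lceil n/L\rceil$ windows gives $\#\{j:t_j<n\}\le K(n\omega+2)$. Taking $j=k$ returns, this yields $t_k\ge L(k/K-1)$, so $\liminf_k t_k/k\ge L/K\gg \big((C+1)\omega^{1-\delta}\big)^{-1}$ for a.e.\ $z\in D$. On the other hand, $f_D$-invariance of $\lambda|_D$ and Kac's lemma give $\int_D \frac1k t_k\,d\lambda=\int_D n_D\,d\lambda\le 1$. Combined with Fatou's lemma, this gives $\lambda(D)\le c\,\omega^{1-\delta}$ with $c\ll C+1$. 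The case $\omega$ bounded below is trivial.

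The main obstacle is Step 2: obtaining a count linear rather than quadratic in $(C+1)\omega^{-\delta}$. This is exactly where the skew-product structure is needed. The $x$-coordinate has no deviation, so only one direction carries the $CL^{\delta}$ error, which is what allows $\delta$ up to $1$. The diameter bound $<\frac12$ (instead of $1$) is used to ensure that a point in $S_j+\tilde D$ determines $S_j$ up to $O(1)$ choices in each coordinate.
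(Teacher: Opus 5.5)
Your proof is correct, but it takes a different route from the paper's.

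\textbf{The paper's route.} The paper fixes a number $N_0=\lceil A\omega^{-\delta}\rceil$ of returns, with $A\asymp C+1$, and splits on $\pi_1(S_{N_0}(z))$. If it is nonzero, the exact horizontal rotation must move the orbit horizontally by at least $1-\Diam(\tilde D)\ge\frac12$, which costs time $\ge 1/(2|\omega_1|)$. If it is zero, monotonicity of the horizontal motion puts all $S_n(z)$, $n\le N_0$, on the column $\{0\}\times\mathbb{Z}$. Franks makes these points distinct, so some $|S_{N_*}(z)|\ge (N_0-1)/2$. The deviation bound along the orbit of $s(z)$ then gives $\frac18\le\omega\bar t+A^{\delta-1}C\omega^{\delta(1-\delta)}\bar t^{\,\delta}$ for the average return time $\bar t=T_{N_0}(z)/N_0$, and this inequality is solved case by case.

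\textbf{Your route.} You instead bound the density of returns. In every time window of length $L=\lceil\omega^{-1}\rceil$ the lifted orbit stays in an $O(1)\times O((C+1)\omega^{-\delta})$ box: the horizontal motion is an exact rotation, and the vertical motion is controlled by the deviation bound applied from the intermediate point $\tilde T^t(s(z))$, which uniformity of the hypothesis in $z$ permits. By Step 1 each window therefore contains at most $K\ll(C+1)\omega^{-\delta}$ returns, and Kac gives $\lambda(D)\le K/L$.

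\textbf{What each buys.} Your argument avoids the case analysis and the nonlinear inequality. It also works verbatim under the weaker hypothesis $\Diam(\tilde D)<1$, whereas the paper uses $\Diam(\tilde D)<\frac12$ quantitatively in the crossing case. It adapts to \Cref{lem:freedisk} as well: windows of length $\asymp((C+1)/\omega)^{1/(1-\delta)}$ with the crude disk count give $\lambda(D)\ll(C+1)^{1/(1-\delta)}\omega^{\frac{1-2\delta}{1-\delta}}$, which is the paper's exponent with a better constant. The paper's argument, for its part, follows a single orbit segment from $s(z)$ with a fixed number of returns, keeping the same template as its proof of \Cref{lem:freedisk}.

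\textbf{A cosmetic slip.} The bound $k+1\le K\lceil (t_k+1)/L\rceil$ gives $t_k>L((k+1)/K-1)-1$, not exactly $t_k\ge L(k/K-1)$. The difference is $O(1)$, so $\liminf_k t_k/k\ge L/K$ is unaffected.
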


\begin{proof}
    Let \(D \subset \mathbb{T}^2\) be as in the lemma.
    Let \(E \subset \mathbb{R}^2\) be a bounded fundamental domain that contains a connected component of \(\pi^{-1}(D)\). Denote this component by \(\tilde{D}\). Let \(s: \mathbb{T}^2 \to E\) be the map such that \(\pi\circ s=\id_{\mathbb{T}^2}\). Then \(s(D)=\tilde{D}\). Let \(\Diam(E) = \sup_{x,y\in E}d(x,y)\) be the diameter of \(E\). Since \(\Diam(\tilde{D})\le\frac{1}{2}\), we can choose \(E\) such that \(\Diam(E)\le 2\).

    We denote by \(\Recp(T_{\alpha, h})\) the set of positively recurrent points of \(T_{\alpha, h}\). Then by Poincar\'e's recurrence theorem, \(\lambda(\Recp(T_{\alpha, h})) = \lambda(\mathbb{T}^2) = 1\). For every \(z \in \Recp(T_{\alpha, h}) \cap D\), let \(n_D(z) = \min\{n\ge 1: T_{\alpha, h}^n(z) \in D\}\) be its first return time. We define \(F_D(z) = T_{\alpha, h}^{n_D(z)}(z)\) to be the first return map of \(D\). Then \(F_D\) is area-preserving on \(D\), and \(\Recp(T_{\alpha, h}) \cap D\) is invariant under \(F_D\).

    For all \(z \in \Recp(T_{\alpha, h}) \cap D\), let \(l_D(z) \in \mathbb{Z}^2\) be the unique point such that \(\tilde{T}_{\tilde{\alpha}, \tilde{h}}^{n_D(z)}(s(z)) \in l_D(z) + \tilde{D}\), which means the integer displacement of the lifted orbit when it first returns to \(D\). It is obvious that \(\tilde{T}_{\tilde{\alpha}, \tilde{h}}^{n_D(z)}(s(z)) = l_D(z) + s(F_D(z))\). Therefore, we have
    \begin{equation}\label{eq:displacement-skew}
        \tilde{T}_{\tilde{\alpha}, \tilde{h}}^{\sum_{i=0}^{N-1}n_D(F_D^i(z))}(s(z)) = \sum_{i=0}^{N-1}l_D(F_D^i(z)) + s(F_D^N(z)).
    \end{equation}

    Fix \(z\in\Recp(T_{\alpha, h})\cap D\). Let \(z_i = F_D^{i}(z)\) for every \(i \in \mathbb{N}\). Let \(S_n(z)\) denote the partial sums of the integer displacement vectors, defined by
    \[
        S_n(z)=\sum_{i=0}^{n-1}l_D(z_{i}),
    \]
    for \(n\ge 1\) and \(S_0(z)=(0,0)\). Let \(T_n(z)\) denote the \(n\)-th return time to \(D\), defined by
    \[
        T_n(z)=\sum_{i=0}^{n-1}n_D(z_{i}),
    \]
    for \(n\ge 1\) and \(T_0(z)=0\). Then by \eqref{eq:displacement-skew}, we have
    \[
        \tilde{T}_{\tilde{\alpha}, \tilde{h}}^{T_n(z)}(s(z))=S_n(z)+s(z_n) \text{ for all }n\in\mathbb{N}.
    \]

    Let \(A = \max\left\{2, 8\Diam(E), 16C\right\}\) and \(N_0 = \left\lceil A\omega^{-\delta}\right\rceil\). Then \(N_0\ge 2\) and \(A\le 16(C+1)\).

    Let \(K = K_{z,D,N_0} = \frac{1}{N_0}T_{N_0}(z)\) denote the average return time. By Kac's lemma, \(\int_{D}n_D(z)dz \le 1\). Together with the fact that \(F_D\) preserves the Lebesgue measure \(\lambda\), we have
    \[
        \frac{1}{\lambda(D)}
        \ge \frac{\int_{D}n_D(z)dz}{\lambda(D)}
        = \frac{\int_{D}\frac{1}{N_0}T_{N_0}(z)dz}{\int_D dz}
        = \frac{\int_{D}Kdz}{\int_{D}dz}.
    \]
    Therefore, we need only find a uniform lower bound of \(K\) for all \(z\in\Recp(T_{\alpha, h})\cap D\).

    We now study the horizontal behaviour of \(T_{\alpha, h}\). Let \(\pi_1\) be the projection map to the first coordinate. By the properties of covering maps, we have \(\pi^{-1}(D)=\left\{\tilde{D}+k:k\in\mathbb{Z}^2\right\}\). Let \(F = \pi_1(\pi^{-1}(D))=\bigcup_{b\in\mathbb{Z}}\left\{\pi_1(\tilde{D})+b\right\}\). Since \(\Diam(\tilde{D})<\frac{1}{2}\), all terms in the union are pairwise disjoint. Moreover, \(\pi_1(l_D(z))=\pi_1(l_D(z'))\) if and only if \(\pi_1(\tilde{T}_{\tilde{\alpha}, \tilde{h}}^{n_D(z)}(s(z)))\) and \(\pi_1(\tilde{T}_{\tilde{\alpha}, \tilde{h}}^{n_D(z')}(s(z')))\) are in the same connected component of \(F\).

    Since for each iteration, the point translates by \(\omega_1\) in the first coordinate under \(\tilde{T}_{\tilde{\alpha}, \tilde{h}}\), we have two situations:

    If \(\pi_1(S_{N_0}(z))\not=0\), then \(\pi_1(\tilde{T}_{\tilde{\alpha}, \tilde{h}}^{T_{N_0}(z)}(s(z)))\) and \(\pi_1(s(z))\) are in distinct connected components of \(F\), so \(\pi_1(\tilde{T}_{\tilde{\alpha}, \tilde{h}}^{T_{N_0}(z)}(s(z)))\) must translate at least \(1-\Diam(D)\) in the first coordinate. Hence, we have
    \[
        T_{N_0}(z) \ge \frac{1-\Diam(\tilde{D})}{\left|\omega_1\right|} \ge \frac{1}{2\left|\omega_1\right|}.
    \]

    Therefore
    \[
        K = \frac{1}{N_0}T_{N_0}(z) \ge \frac{1}{2 N_0 \left|\omega_1\right|} \ge \frac{1}{2 \left\lceil A\omega^{-\delta}\right\rceil \omega} \ge\frac{1}{64}(C+1)^{-1}\omega^{-(1-\delta)}.
    \]

    If \(\pi_1(S_{N_0}(z))=0\), then \(\pi_1(\tilde{T}_{\tilde{\alpha}, \tilde{h}}^{T_{N_0}(z)}(s(z)))\) and \(\pi_1(s(z))\) are in the same connected component. Since \(\pi_1(\tilde{T}_{\tilde{\alpha}, \tilde{h}}^j(s(z)))=\pi_1(s(z))+j\omega_1\) varies strictly monotonically with \(j\), \(\pi_1(\tilde{T}_{\tilde{\alpha}, \tilde{h}}^{j}(s(z)))\) lies in the interval between \(\pi_1(s(z))\) and \(\pi_1(\tilde{T}_{\tilde{\alpha}, \tilde{h}}^{T_{N_0}(z)}(s(z)))\) for all \(0\le j\le T_{N_0}(z)\). Hence, \(\pi_1(\tilde{T}_{\tilde{\alpha}, \tilde{h}}^{T_{n}(z)}(s(z)))\) and \(\pi_1(s(z))\) are in the same connected component for all \(1\le n\le N_0\). Consequently, \(\pi_1(S_n(z))=0\) for all \(0\le n\le N_0\). Then there exists \(0\le N_*\le N_0\) such that \(\left|S_{N_*}(z)\right| \ge \frac{N_0-1}{2}\). Otherwise, by the pigeonhole principle on \(\left\{(0,b):b\in\mathbb{Z}\right\}\), there exists \(0 \le N_1< N_2 \le N_0\) such that \(S_{N_1}(z) = S_{N_2}(z)\), namely \(\sum_{i=N_1}^{N_2-1}l_D(z_i)=0\). Then by the same argument as in \Cref{lem:freedisk}, the following disks:
    \[
        \tilde{D}+S_{j}(z),\qquad N_1\le j\le N_2
    \]
    must contain a periodic free disk chain. Consequently, \(\tilde{T}_{\tilde{\alpha}, \tilde{h}}\) would have a fixed point by Franks' lemma, which leads to a contradiction.

    By the definition of \(\Delta\) and \eqref{eq:displacement-skew}, we have
    \[
        \left| \frac{\sum_{i=0}^{N-1} l_{D}(z_i)}{\sum_{i=0}^{N-1} n_{D}(z_i)} - \vec{\omega} \right|
        \leq \frac{\left|s(z_N) - s(z)\right|}{\sum_{i = 0}^{N-1} n_{D}(z_i)} + \Delta_{\sum_{i = 0}^{N-1} n_{D}(z_i)}(T_{\alpha, h}, z).
    \]
    Therefore,
    \begin{equation}\label{eq:relation-l-n}
        \left| \sum_{i = 0}^{N-1} l_{D}(z_i) \right|
        \leq \Diam(E) + \sum_{i = 0}^{N-1} n_{D}(z_i)(\omega + \Delta_{\sum_{i = 0}^{N-1} n_{D}(z_i)}(T_{\alpha, h}, z)).
    \end{equation}

    Since \(T_{\alpha, h}\) has \((C,\delta)\)-deviation, after replacing \(N\) by \(N_*\), we have
    \begin{align}
        \frac{N_0-1}{2} \le \left|\sum_{i=0}^{N_*-1}l_D(z_{i})\right|
                                            & \le \Diam(E) + \omega \sum_{i = 0}^{N_*-1} n_{D}(z_i) + C\left(\sum_{i = 0}^{N_*-1} n_{D}(z_i)\right)^\delta \notag                      \\
                                            & \le \Diam(E) + \omega \sum_{i = 0}^{N_0-1} n_{D}(z_i) + C\left(\sum_{i = 0}^{N_0-1} n_{D}(z_i)\right)^\delta \notag                      \\
        \frac{1}{4} \le \frac{N_0-1}{2 N_0} & \le \frac{\Diam(E)}{N_0} + \omega K + CN_0^{\delta-1}K^{\delta} \notag                                                                   \\
                                            & \le \frac{\Diam(E)}{\left\lceil A\omega^{-\delta}\right\rceil} + \omega K + C \left( A\omega^{-\delta}\right)^{\delta-1} K^\delta \notag \\
                                            & \le \frac{1}{8} + \omega K + A^{\delta-1}C \omega^{\delta(1-\delta)} K^\delta \notag                                                     \\
        \frac{1}{8}                         & \le \omega K + A^{\delta-1}C \omega^{\delta(1-\delta)} K^\delta. \label{eq:skew-main}
    \end{align}

    When \(\delta=0\), we have \(\omega K\ge\frac{1}{16}\) and \(K\ge\frac{1}{16}\omega^{-(1-\delta)}\).

    When \(\delta\not=0\) and \(\omega K \le A^{\delta-1}C \omega^{\delta(1-\delta)} K^\delta\), by \eqref{eq:skew-main}, we have
    \[
        K
        \ge 2^{-\frac{4}{\delta}}A^{\frac{1-\delta}{\delta}}C^{-\frac{1}{\delta}}\omega^{-(1-\delta)}
        \ge \frac{1}{16}C^{-1}\omega^{-(1-\delta)}.
    \]

    When \(\delta\not=0\) and \(\omega K > A^{\delta-1}C \omega^{\delta(1-\delta)} K^\delta\), by \eqref{eq:skew-main}, we have
    \[K \ge \frac{1}{16} \omega^{-1} \ge \frac{1}{16} \omega^{-(1-\delta)}.\]

    In all cases, there exists a constant \(c\ll_{\delta} C+1\) such that \(K \ge 2c^{-1}\omega^{-(1-\delta)}\). Hence, we conclude that \(\lambda(D)\le c\omega^{1-\delta}\).
\end{proof}

Unlike \Cref{cor:freedisk}, where a round disk was used, we now employ a long strip. This yields a better bound, as skew products have no horizontal deviation.

\begin{corollary}\label{cor:freedisk-skew}
    Assume \(h\) is \(a\)-H\"older continuous with H\"older constant \(M\), \(T_{\alpha, h}\) has \((C, \delta)\)-deviation, \(\omega\le (16c)^{-\frac{1}{1-\delta}}\). Then for all \(0< u, v < \frac{1}{4}\) such that \(uv >c\omega^{1-\delta}\), where \(c\) is defined in \Cref{lem:freedisk-skew}, we have
    \[
        d_{C^0}(T_{\alpha, h}, \id_{\mathbb{T}^2}) \leq 2u + 2v + Mu^{a}.
    \]
\end{corollary}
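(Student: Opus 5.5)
Fix an arbitrary point \(p=(x_0,y_0)\in\mathbb{T}^2\) and consider the open rectangle (strip)
\[
R_p=\{(x,y): |x-x_0|<u,\ |y-y_0|<v\}\subset\mathbb{T}^2,
\]
which is a topological disk of area \(\lambda(R_p)=4uv>c\,\omega^{1-\delta}\). Since \(u,v<\frac14\), each connected component of \(\pi^{-1}(R_p)\) is a rectangle of side lengths \(2u,2v<\frac12\). Its diameter is \(2\sqrt{u^2+v^2}\), which may slightly exceed \(\frac12\). If needed, we shrink to the inscribed rectangle of half-sides \(u',v'\) with \(u'v'\) still \(>c\omega^{1-\delta}\); alternatively, one checks that the proof of \Cref{lem:freedisk-skew} only used \(\Diam(E)\le 2\) and that the projections \(\pi_1(\tilde D)+b\) are pairwise disjoint, and both hold since the horizontal width is \(2u<\frac12\). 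The hypothesis \(\omega\le(16c)^{-\frac{1}{1-\delta}}\) guarantees \(c\omega^{1-\delta}\le\frac1{16}\), so the constraint \(uv>c\omega^{1-\delta}\) is compatible with \(u,v<\frac14\). By \Cref{lem:freedisk-skew}, \(R_p\) cannot be a free disk, so there is a point \(z=(x,y)\in R_p\) with \(T_{\alpha,h}(z)\in R_p\).

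Next we convert this intersection into a displacement bound at \(p\). Write \(T_{\alpha,h}(x,y)=(x+\alpha,\,y+h(x))\). From \(z,T_{\alpha,h}(z)\in R_p\) we get \(\|\alpha\|_{\mathbb{T}}\le 2u\), since the first coordinates of \(z\) and \(T_{\alpha,h}(z)\) are both within \(u\) of \(x_0\). Similarly \(\|h(x)\|_{\mathbb{T}}\le 2v\) for some \(x\) with \(|x-x_0|<u\). Hölder continuity then gives
\[
\|h(x_0)\|_{\mathbb{T}}\le 2v+Mu^a .
\]
Therefore
\[
\|T_{\alpha,h}(p)-p\|_{\mathbb{T}^2}\le \|\alpha\|_{\mathbb{T}}+\|h(x_0)\|_{\mathbb{T}}\le 2u+2v+Mu^a .
\]
Taking the supremum over \(p\) yields the claimed bound on \(d_{C^0}(T_{\alpha,h},\id_{\mathbb{T}^2})\).

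The key advantage over \Cref{cor:freedisk} is that the horizontal error \(2u\) is uniform in \(p\), because the first coordinate is a rigid rotation. The only regularity loss is \(Mu^a\), coming from the vertical variation of \(h\) across the width of the strip.

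The main obstacle is the technical verification that \(R_p\) meets the diameter hypothesis of \Cref{lem:freedisk-skew}. We will handle it by rereading that proof: the diameter condition enters only through the disjointness of the horizontal projections and through \(\Diam(E)\le 2\), and both remain valid for rectangles with \(u,v<\frac14\).
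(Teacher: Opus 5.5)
Your proof is correct. It has the same skeleton as the paper's: a rectangle of area \(>c\omega^{1-\delta}\) around each point, \Cref{lem:freedisk-skew} to rule out freeness, and H\"older continuity of \(h\) to turn the resulting intersection into a displacement bound. It differs from the paper in two details.

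\textbf{The size of the rectangle.} The paper uses the rectangle with side lengths \(u,v\), i.e.\ half-sides \(u/2,v/2\). Its area is exactly \(uv>c\omega^{1-\delta}\) and its diameter is \(\sqrt{u^2+v^2}<\sqrt2/4<\frac12\), so the lemma applies verbatim. Your half-sides \(u,v\) give four times the needed area and break the diameter hypothesis.

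Your second remedy is sound. In the proof of \Cref{lem:freedisk-skew}, the diameter of \(\tilde D\) enters only through three places: \(\Diam(E)\le 2\), the disjointness of the intervals \(\pi_1(\tilde D)+b\), and the bound \(T_{N_0}(z)\ge\frac{1-\Diam(\tilde D)}{|\omega_1|}\ge\frac{1}{2|\omega_1|}\). Each of these holds for a rectangle with side lengths \(2u,2v<\frac12\).

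Your first remedy is misnormalized. A rectangle with half-sides \(u',v'\) has area \(4u'v'\), so the requirement is \(4u'v'>c\omega^{1-\delta}\). This is achieved by \(u'=u/2\), \(v'=v/2\), which is precisely the paper's rectangle. The condition \(u'v'>c\omega^{1-\delta}\) you wrote cannot coexist with \(u'^2+v'^2<\frac1{16}\) once \(c\omega^{1-\delta}\ge\frac1{32}\).

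\textbf{The displacement estimate.} The paper uses the generic bound \(\|T_{\alpha,h}(z)-z\|_{\mathbb{T}^2}\le\Diam(R)+\Diam(T_{\alpha,h}(R))\) together with \eqref{eq:diam}, the same mechanism as in \Cref{cor:freedisk}. You instead read off coordinates: the horizontal displacement is the constant \(\alpha\), which a single intersection point already controls. H\"older continuity is then needed only for the vertical component at one point. Your version is specific to skew products but slightly sharper: on the paper's rectangle it yields \(u+v+M(u/2)^a\). The paper's argument is the one that carries over to general homeomorphisms.
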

\begin{proof}
    Since \(h\) is H\"older continuous, \(\left\|h(x) - h(y)\right\|_{\mathbb{T}} \leq M \left\|x-y\right\|_{\mathbb{T}}^{a}\) for all \(x,y \in \mathbb{T}\). Hence, for any two points \((x_1,y_1), (x_2,y_2) \in \mathbb{T}^2\), and \(n\in\mathbb{N}^{+}\), we have
    \begin{equation}\label{eq:diam}
        \begin{aligned}
                & \left\|T_{\alpha,h}(x_1,y_1) - T_{\alpha,h}(x_2,y_2)\right\|_{\mathbb{T}^2}                                         \\
            =   & \left\| (x_1 - x_2, y_1 - y_2 + h(x_1) - h(x_2)) \right\|_{\mathbb{T}^2}                                            \\
            \le & \left\|x_1-x_2\right\|_{\mathbb{T}} + \left\|y_1-y_2\right\|_{\mathbb{T}} + M\left\|x_1-x_2\right\|_{\mathbb{T}}^a.
        \end{aligned}
    \end{equation}

    Choose an arbitrary \(z = (x,y)\in\mathbb{T}^2\). We consider the rectangle \(R = \left(x-\frac{u}{2}, x+\frac{u}{2}\right)\times\left(y-\frac{v}{2}, y+\frac{v}{2}\right)\). Obviously \(\Diam(R)\le\sqrt{2}/4 < 1/2\). Then by \eqref{eq:diam}, \(\Diam(T_{\alpha, h}(R))\le u+v+Mu^{a}\). Since \(\lambda(R) = uv > c\omega^{1-\delta}\), by \Cref{lem:freedisk-skew}, \(R\) is not a free disk. Hence,
    \(\left\|T_{\alpha,h}(z)-z\right\|_{\mathbb{T}^2} \le \Diam(R) + \Diam(T_{\alpha,h}(R))\le 2u + 2v + Mu^a\). Therefore, this bound holds for all \(z\in\mathbb{T}^2\), and the corollary is proved.
\end{proof}

Since the iteration of a skew product only grows linearly, we can control them with a suitable irrationality condition. Using the corollary above and the irrationality assumption, \Cref{thm:rigid-skew} now follows from the following estimate.

\begin{proof}[Proof of \Cref{thm:rigid-skew}]

    We first assume that \(h\) has zero mean, i.e., \(\int_{\mathbb{T}} \tilde{h}(x) \, \mathrm{d}x = 0\). Then \(\left\|\rho(T_{\alpha, h})\right\|_{\mathbb{T}^2}=\left\|\alpha\right\|_{\mathbb{T}}\).

    Let \([a_0;a_1,a_2,\ldots]\) be the continued fraction expansion of \(\alpha\) and \(\frac{p_n}{q_n}\) be the \(n\)-th convergent. By the assumption on \(\alpha\), there exist a subsequence \(\{q_{n_j}\}_{j \geq 1}\) and \(\varepsilon > 0\) such that
    \[
        q_{n_j+1} \gg q_{n_j}^{\frac{1+a\delta}{a(1-\delta)}+\varepsilon}.
    \]
    Moreover, for each \(j\) we can choose a lift \(\tilde{T}_j\) of \(T_{\alpha, h}^{q_{n_j}}\) similarly to the beginning of the section such that \(\left|\rho(\tilde{T}_j)\right|=\left\|\rho(T_{\alpha, h}^{q_{n_j}})\right\|_{\mathbb{T}^2}=\left\|q_{n_j}\alpha\right\|_{\mathbb{T}}\).

    By our assumption, \(h\) is H\"older continuous with exponent \(a\), so there exists \(M = M(h) < \infty\) such that \(\left\|h(x) - h(y)\right\|_{\mathbb{T}} \leq M \left\|x-y\right\|_{\mathbb{T}}^{a}\) for all \(x,y \in \mathbb{T}\). Then \(\sum_{i = 0}^{n-1} h(x + i\alpha)\) is \(a\)-H\"older continuous with H\"older constant \(nM\) for all \(n\in\mathbb{N}^{+}\).

    We now apply the free disk estimate to a suitable iterate of \(T_{\alpha, h}\). Since \(T_{\alpha, h}^{q_{n_j}}(x,y) = (x+q_{n_j}\alpha, y+\sum_{i = 0}^{q_{n_j}-1} h(x + i\alpha))\) is also a skew product with \((Cq_{n_j}^{\delta}, \delta)\)-deviation, we can apply \Cref{lem:freedisk-skew} and obtain \(c_j\ll Cq_{n_j}^{\delta}+1\) such that all free disks for \(T_{\alpha, h}^{q_{n_j}}\) have area no more than \(c_j\left|\rho(\tilde{T}_j)\right|^{1-\delta}\).

    Since \(\frac{1+a\delta}{a(1-\delta)}\ge\frac{\delta}{1-\delta}\), we have \(\left|\rho(\tilde{T}_j)\right| < \min\{1, (16c_j)^{-\frac{1}{1-\delta}}\}\) for sufficiently large \(j\).
    Set \(u = \left|\frac{\rho(\tilde{T}_j)}{q_{n_j}}\right|^{\frac{1-\delta}{1+a}}\) and \(v = 2c_j q_{n_j}^{\frac{1-\delta}{1+a}}\left|\rho(\tilde{T}_j)\right|^{\frac{a(1-\delta)}{1+a}}\). Then \(uv=2c_j \left|\rho(\tilde{T}_j)\right|^{1-\delta}\) By the irrationality condition, we have \(0<u,v<\frac{1}{4}\) for sufficiently large \(j\). Applying \Cref{cor:freedisk-skew} with the above choices of \(u\) and \(v\), we obtain
    \begin{align*}
        d_{C^0}(T_{\alpha, h}^{q_{n_j}}, \id)
        \leq & 2u + 2v + q_{n_j}Mu^{a}                                                                 \\
        \ll  & 2\left|\frac{\rho(\tilde{T}_j)}{q_{n_j}}\right|^{\frac{1-\delta}{1+a}}
        + 2C q_{n_j}^{\delta+\frac{1-\delta}{1+a}}\left|\rho(\tilde{T}_j)\right|^{\frac{a(1-\delta)}{1+a}}
        + M q_{n_j}\left|\frac{\rho(\tilde{T}_j)}{q_{n_j}}\right|^{\frac{a(1-\delta)}{1+a}}            \\
        \ll  & q_{n_j}^{\frac{1+a\delta}{1+a}}\left|\rho(\tilde{T}_j)\right|^{\frac{a(1-\delta)}{1+a}} \\
        \ll  & q_{n_j}^{\frac{1+a\delta}{1+a}}q_{n_j+1}^{-\frac{a(1-\delta)}{1+a}}.
    \end{align*}
    Then, using the arithmetic condition of \(\alpha\), we have
    \[
        d_{C^0}(T_{\alpha, h}^{q_{n_j}}, \id)
        \ll q_{n_j}^{-\frac{a(1-\delta)}{1+a}\varepsilon}.
    \]
    Therefore, \(T_{\alpha,h}\) is rigid with a polynomial decay rate.

    When \(h\) does not have zero mean, we can use an adaptation of De Faveri's argument \pcite[Proposition~6.1]{DeFaveriMobius2022}.
    Let \(\beta = \int_{\mathbb{T}}\tilde{h}(x)dx\). Consider the skew product \(T'_{\alpha, h}(x, y) = (x+\alpha, y+h(x)-\beta)\). It is obvious that \(T'_{\alpha, h}\) has zero mean and satisfies the \((C,\delta)\)-deviation condition. By the result above, there exists a sequence \(\{q_j\}_{j\ge 1}\) and \(\varepsilon'>0\) such that \(d_{C^0}(\left(T'_{\alpha, h}\right)^{q_j}, \id) \ll q_{j}^{-\varepsilon'}\).
    Hence, for all \((x,y) \in \mathbb{T}^2\), we have
    \[
        \left\|q_j \alpha\right\|_{\mathbb{T}}\ll q_{j}^{-\varepsilon'}
        \text{ and }
        \left\|\sum_{i=0}^{q_j-1}h(x+i\alpha) - q_j\beta \right\|_{\mathbb{T}}\ll q_{j}^{-\varepsilon'}.
    \]

    By Dirichlet's approximation theorem, for all \(j\) sufficiently large, there exists \(\ell_j \in \mathbb{Z}\) with
    \[
        0< \ell_j\le q_j^{\frac{1}{2}\varepsilon'}\text{ and }\left\|\ell_j q_j\beta\right\|_{\mathbb{T}} < q_j^{-\frac{1}{2}\varepsilon'}.
    \]

    Therefore, for all \((x,y) \in \mathbb{T}^2\), we have
    \begin{align*}
        d(T_{\alpha, h}^{\ell_j q_j}(x,y), (x, y))
         & \le\left\|\ell_j q_j \alpha\right\|_{\mathbb{T}} + \left\|\ell_j q_j \beta\right\|_{\mathbb{T}}                                            \\
         & \quad+ \left\|\sum_{k=0}^{\ell_j - 1}\left(\sum_{i=0}^{q_j - 1}h(x + kq_j\alpha + i\alpha) - q_j\beta\right) \right\|_{\mathbb{T}}         \\
         & \ll q_j^{\frac{1}{2}\varepsilon'} q_j^{-\varepsilon'} + q_j^{-\frac{1}{2}\varepsilon'} + q_j^{\frac{1}{2}\varepsilon'} q_j^{-\varepsilon'} \\
         & \ll q_j^{-\frac{1}{2}\varepsilon'} \le \left(\ell_j q_j\right)^{-\frac{\varepsilon'}{2+\varepsilon'}}.
    \end{align*}
    Therefore, \(T_{\alpha,h}\) is rigid with a polynomial decay rate.
\end{proof}

\section*{Funding}

This work was supported by the National Natural Science Foundation of China [12371193 to Y.C., 12361141812 to J.W. and J.Z., 12071231 to J.W.]; and the Fundamental Research Funds for the Central Universities [63213122 to J.W.].

\section*{Acknowledgements}

All content was created by the authors. AI tools were used only for occasional discussion and final proofreading.

\appendix
\crefalias{section}{appendix}
\Crefname{appendix}{Appendix}{Appendices}

\section{The admissible rotation numbers in Theorem \ref{thm:rigid-skew}}\label{app:admissible}

\subsection{Baire category}\label{subapp:baire}

Recall the irrationality measure \(\mu(\alpha)\) is defined as:

\[
    \mu(\alpha)=
    \sup\left\{r>0:
    \begin{aligned}
         & \left|\alpha-\frac pq\right|<q^{-r}\text{ for infinitely many} \\
         & \text{coprime }(p,q)\in\mathbb{Z}\times\mathbb{N}^{+}\end{aligned}\right\}.
\]

For \(\tau\ge2\), we define
\[
    E_\tau=\{\alpha\in\mathbb{R}\setminus\mathbb{Q}:\mu(\alpha)>\tau\}.
\]

Obviously, the admissible rotation numbers in \Cref{thm:rigid-skew} are precisely \(E_{\tau_*}\), where \(\tau_*=1+\frac{1+a\delta}{a(1-\delta)}\). By the definition in \Cref{sec:intro}, we have

\begin{align*}
    E_\tau
     & =\bigcup_{m=1}^{\infty}\left(\left(\mathbb{R}\setminus\mathbb{Q}\right)\cap
    \bigcap_{N=1}^{\infty}\bigcup_{q=N}^{\infty}
    \bigcup_{p\in\mathbb{Z}}
    \left\{\alpha\in\mathbb{R}:\left|\alpha-\frac{p}{q}\right|<\frac{1}{q^{\tau+\frac{1}{m}}}\right\}\right).
\end{align*}

For fixed \(m\) and \(N\), the innermost two unions are open. It is also dense: every nonempty interval contains rational points with arbitrarily large denominators, and a sufficiently small neighborhood of such a rational point satisfies the displayed inequality. Meanwhile, \(\mathbb{R}\setminus\mathbb{Q}\) is a dense \(G_{\delta}\) set. Therefore, by the Baire category theorem, \(E_{\tau}\) is a countable union of dense \(G_\delta\) sets, hence residual.

\subsection{Hausdorff dimension}\label{subapp:hausdorff}

The Jarník--Besicovitch theorem states that
\(\mathrm{dim}_H\{\mu(\alpha)\ge s\}=\frac{2}{s}\) for \(s\ge2\) \pcites{jarnik1929,besicovitch}.

Since \(\{\mu(\alpha)\ge s\} \subset E_\tau \subset \{\mu(\alpha)\ge s'\}\) for all \( 2 \le s' \le \tau < s\), it follows that
\(\frac{2}{\tau} = \inf_{2 \le s' \le \tau}\frac{2}{s'} \ge \mathrm{dim}_H (E_\tau) \ge \sup_{s>\tau}\frac{2}{s} = \frac{2}{\tau}\), namely \(\mathrm{dim}_H (E_\tau)=\frac{2}{\tau}\).
For \Cref{thm:rigid-skew} this gives
\begin{equation*}\label{eq:dimension-theorem}
    \mathrm{dim}_H (E_{\tau_*})
    =
    \frac{2}{
        1+\dfrac{1+a\delta}{a(1-\delta)}
    }.
\end{equation*}

\section{Existence of skew products}\label{app:exist}

In this section we prove that for all irrational \(\alpha\) with \(\mu(\alpha) > 1 + \frac{a}{1-\delta}\), \(a\in\opclint{0,1}\), \(\delta\in\clopint{0,1}\) and \(C>0\), there exists a continuum family of \(a\)-H\"older continuous skew product that satisfies \((C,\delta)\)-condition. Moreover, each of these maps is not \(a'\)-H\"older continuous for all \(a<a'\le 1\) and does not satisfy \((C', \delta')\)-condition for all \(C'>0\) and \(0\le\delta'<\delta\). In particular, the irrationality measure threshold \(1+\frac{1+a\delta}{a(1-\delta)}\) in \Cref{thm:rigid-skew} is always greater than or equal to \(1 + \frac{a}{1-\delta}\).

By definition, for all skew product \(T_{\alpha, h}\) with degree zero,
\[
    \Delta_n(T_{\alpha, h}, z)=\left|\frac{\tilde{T}_{\tilde{\alpha}, \tilde{h}}^n(\tilde{z}) - \tilde{z}}{n} - \rho(\tilde{T}_{\tilde{\alpha}, \tilde{h}})\right|=\left|\frac{1}{n}\sum_{i=0}^{n-1}\tilde{h}(x+i\tilde{\alpha})-\int_{\mathbb{T}}\tilde{h}(t)dt\right|,
\]
where \(z=(x,y)\). Hence, when \(\int\tilde{h}=0\), \(n\Delta_n(T_{\alpha, h}, z)\) coincides with the absolute value of Birkhoff sum of \(\tilde{h}\). We denote the Birkhoff sum by \(S_n\tilde{h}\). Therefore, we just need to find suitable \(a\)-H\"older continuous \(\tilde{h}:\mathbb{T}\to\mathbb{R}\) such that \(S_n\tilde{h}\) satisfies certain condition.

To obtain such \(\tilde{h}\), we first construct an \(a\)-H\"older lacunary Fourier series \(\tilde{h}_1\) that satisfies \((C,\delta)\)-condition but not \((C', \delta')\)-condition for any \(C'>0, \delta'<\delta\). Then we construct an \(a\)-H\"older continuous series \(\tilde{h}_2\) with bounded mean motion that is not \(a'\)-H\"older continuous for any \(a'>a\). In particular, we can make the frequency sets of two functions disjoint. Then \(\tilde{h} = \tilde{h}_1 + \tilde{h}_2\) is our desired function (after multiplying by a sufficiently small constant, if necessary) and \(T_{\alpha, a, \delta}(x,y)=(x+\alpha, y+\pi\circ\tilde{h}(x))\) satisfies all the required properties.

First we recall some basic conclusions:

\begin{itemize}
    \item \(2\left\|x\right\|_{\mathbb{T}}\le\left|1-e^{2\pi i x}\right|\le 2\pi\left\|x\right\|_{\mathbb{T}}\) for all \(x\in\mathbb{R}\).
    \item Let \(a_n\) be the \(n\)-th Fourier coefficient of \(f:\mathbb{T}\to\mathbb{R}\). If \(f\) is \(a\)-H\"older, then \(\left| n^a a_n \right|\ll 1\) as \(n\to\infty\). On the other hand, \(\sum_{n=1}^{\infty} n^a \left|a_n\right|<\infty\) implies that \(f\) is \(a\)-H\"older.
\end{itemize}

\paragraph{\textbf{Construction of \(\tilde{h}_1\)}}

Let \(\{q_n\}\) be the best approximation denominator of \(\alpha\). Since \(\mu(\alpha)>1+\frac{a}{1-\delta}\), we can choose a subsequence \(\{q_{n_j}\}\subset \{q_{n}\}\), such that \(q_{n_{j+1}} \ge 2^{M+2}q_{1+n_j}^M\) and \(q_{1+n_j}\ge q_{n_j}^{\frac{a}{1-\delta}}\) for some fixed \(M>1\).

Let \(\theta_j = \left\|q_{n_j}\alpha\right\|_{\mathbb{T}}\). Denote \(\{q_{n_j}\}\) and \(\{q_{1+n_j}\}\) by \(\{r_{j}\}\) and \(\{s_{j}\}\) respectively.

Since \(\frac{1}{2}s_j^{-1}\le\theta_j\le s_j^{-1}\), we have \(\theta_{j+1} \le \frac{1}{4} \theta_j^{M}\). In particular, \(\theta_j\) decays superexponentially.

Let
\[
    \tilde{h}_1(x) = \sum_{j=1}^{\infty}2^{-j}t_j\theta_j^{1-\delta}\cos(2\pi r_j x)
\]
where \(t_j\in[\frac{1}{2}, 1]\) for each \(j\).

Let \(\tilde{h}_{1,n}\) be the \(n\)-th Fourier coefficient of \(\tilde{h}_1\). Then \(\tilde{h}_{1,r_j} = \tilde{h}_{1,-r_j}=2^{-j-1}t_j\theta_j^{1-\delta}\) and all other coefficients are \(0\).

Then
\[
    S_n \tilde{h}_1(x) = \sum_{j=1}^{\infty} 2^{-j}t_j\theta_j^{1-\delta} \Re\left( e^{2\pi i r_j x} \frac{1-e^{2\pi i n r_j \alpha}}{1 - e^{2\pi i r_j \alpha}}\right).
\]

We can compute the deviation of \(\tilde{h}_1\):
\begin{align*}
    \left|S_n \tilde{h}_1(x)\right|
     & \le \sum_{j=1}^{\infty} t_j\theta_j^{1-\delta} \left| \frac{1-e^{2\pi i n r_j \alpha}}{1 - e^{2\pi i r_j \alpha}}\right| \\
     & \ll \sum_{j=1}^{\infty} \theta_j^{1-\delta} \min\left\{n, \theta_j^{-1} \right\}                                         \\
     & = \sum_{\{j:n \ge \theta_j^{-1}\}} \theta_j^{-\delta} + n\sum_{\{j:n < \theta_j^{-1}\}}\theta_j^{1-\delta}               \\
     & \ll n^{\delta} + n\cdot n^{\delta-1}\sum_{j=0}^{\infty}\left(4^{\delta-1}\right)^j                                       \\
     & \ll n^{\delta}.
\end{align*}

We next prove that \(\tilde{h}_1\) cannot satisfy any stronger deviation condition. The case \(\delta=0\) is trivial, so we assume that \(0\le\delta'<\delta\). For each \(j \in \mathbb{N}\), let \(n'_j=\left\lfloor\frac{1}{4\theta_{j}}\right\rfloor\) and \(x_j=-\frac{1}{2\pi r_{j}}\arg(\frac{1-e^{2\pi i n'_j r_{j} \alpha}}{1 - e^{2\pi i r_{j} \alpha}})\). Then

\[
    \frac{1}{4} - \theta_{j}\le\left(\frac{1}{4 \theta_{j}}-1\right)\theta_{j}\le n'_j\left\|r_{j}\alpha\right\|_{\mathbb{T}}\le \frac{1}{4\theta_{j}}\theta_{j}\le \frac{1}{4}.
\]

Therefore, for sufficiently large \(j\), we have

\begin{align*}
    \left|S_{n'_j} \tilde{h}_1(x_j)\right|
     & \ge 2^{-j}t_{j}\theta_{j}^{1-\delta}\left|\frac{1-e^{2\pi i n'_j r_{j} \alpha}}{1 - e^{2\pi i r_{j} \alpha}}\right| - \sum_{k\not=j}^{\infty} t_k\theta_k^{1-\delta} \left| \frac{1-e^{2\pi i n'_j r_k \alpha}}{1 - e^{2\pi i r_k \alpha}}\right| \\
     & \ge \frac{1}{2^{j+3}\pi} \theta_{j}^{1-\delta}\theta_{j}^{-1} - \frac{1}{2}\sum_{k\not=j}\theta_k^{1-\delta}\min\{n'_j, \theta_k^{-1}\}                                                                                                           \\
     & = \frac{1}{2^{j+3}\pi} \theta_{j}^{-\delta} - \frac{1}{2}\sum_{k<j}\theta_k^{-\delta} - \frac{1}{2}\sum_{k>j}\theta_k^{1-\delta}n'_j                                                                                                              \\
     & \gg \frac{1}{2^{j+3}\pi}\theta_{j}^{-\delta} - \frac{1}{2}\theta_{j}^{-\frac{\delta}{M}} - \frac{1}{2}\theta_{j}^{(M-1)(1-\delta)-\delta}\sum_{k=0}^{\infty}\left(4^{\delta-1}\right)^k                                                           \\
     & \gg 2^{-j}\theta_{j}^{-\delta}.
\end{align*}

It follows that \((n'_j)^{-\delta'}\left|S_{n'_j} \tilde{h}_1(x_j)\right|\gg 2^{-j}\theta_{j}^{\delta'-\delta}\to\infty\) since \(0\le\delta'<\delta\) and \(\theta_j\) decays superexponentially. Consequently, for every \(0\le\delta'<\delta\) and \(C'>0\), \(\tilde{h}_1\) cannot satisfy the \((C', \delta')\)-deviation condition.

Next we check the regularity of \(\tilde{h}_1\):

Since \(s_j \ge r_j^{\frac{a}{1-\delta}}\), we have
\[
    \sum_{n\in\mathbb{Z}}\left|n\right|^a \left|\tilde{h}_{1,n}\right| = \sum_{j=1}^{\infty}2^{-j}t_j\theta_j^{1-\delta}r_j^{a}\le \sum_{j=1}^{\infty} 2^{-j}s_j^{-(1-\delta)}r_j^{a}\le\sum_{j=1}^{\infty}2^{-j}< \infty.
\]

Hence \(\tilde{h}_1(x)\) is \(a\)-H\"older continuous.

Moreover, since \(t_j\) can be chosen arbitrarily while preserving the constants in the deviation condition and different Fourier coefficients yield different maps, this construction yields a continuum family of maps with the same properties by choosing arbitrary \(\{t_j\}\subset[\frac{1}{2}, 1]^{\mathbb{N}}\).

\paragraph{\textbf{Construction of \(\tilde{h}_2\)}}

Let \(\{m_j\}\) be a subsequence of \(\mathbb{N}^{+}\), such that \(\left\|m_j\alpha\right\|_{\mathbb{T}}\ge \frac{1}{4}\) and \(m_{j+1} \ge 2 m_{j}\). Since \(\left\|q_n\alpha\right\|_{\mathbb{T}}\to 0\) as \(n\to\infty\), \(\{m_j\}\cap\{q_{n_j}\}\) has only finite elements, we can exclude such elements from \(\{m_j\}\) to make \(\{m_j\}\cap\{q_{n_j}\}=\emptyset\).

Let
\[
    \tilde{h}'_2(x) = \sum_{j=1}^{\infty}j^{-2}t_jm_j^{-a}\cos(2\pi m_j x)
\]
where \(t_j\in[\frac{1}{2}, 1]\) for all \(j\).

Let \(\tilde{h}^{'}_{2,n}\) be the \(n\)-th Fourier coefficient of \(\tilde{h}^{'}_2\). Then \(\tilde{h}^{'}_{2,m_j} = \tilde{h}^{'}_{2,-m_j}=\frac{1}{2}j^{-2}t_jm_j^{-a}\) and all other coefficients are \(0\).

Then

\[
    \sum_{n\in\mathbb{Z}}\left|n\right|^a \left|\tilde{h}^{'}_{2,n}\right| = \sum_{j=1}^{\infty}m_j^a \cdot j^{-2}t_jm_j^{-a} \le \sum_{j=1}^{\infty}j^{-2} < \infty.
\]

Therefore, \(\tilde{h}^{'}_2\) is \(a\)-H\"older continuous.

Let \(\tilde{h}_2(x) = \tilde{h}^{'}_2(x) - \tilde{h}^{'}_2(x+\alpha)\). Then \(\tilde{h}_2(x)\) is also \(a\)-H\"older continuous and its \(\pm m_j\)-th Fourier coefficients are:
\[\tilde{h}_{2,\pm m_j} = \frac{1}{2}\left(1-e^{\pm 2\pi i m_j\alpha}\right)j^{-2}t_j m_j^{-a}.\]

For every \(\epsilon>0\),

\begin{align*}
    \left|m_j\right|^{a+\epsilon} \left|\tilde{h}_{2,m_j}\right|
     & = m_j^{a+\epsilon} \cdot \frac{1}{2}\left|1-e^{2\pi i m_j\alpha}\right|j^{-2}t_j m_j^{-a} \\
     & \ge j^{-2} 2^{(j-1)\epsilon-3} m_1^{\epsilon} \to \infty \text{ as } j\to\infty.
\end{align*}

Hence, \(\tilde{h}_2(x)\) is not \((a+\epsilon)\)-H\"older continuous.

Since \(\tilde{h}_2\) is a coboundary, \(\tilde{h}_2\) has uniformly bounded Birkhoff sums, hence bounded mean motion.

Similarly to \(\tilde{h}_1\), \(t_j\) can be chosen arbitrarily, so this construction also yields a continuum family of maps with the same properties.

\paragraph{\textbf{Properties of \(\tilde{h}\)}}

Since \(\tilde{h}=\tilde{h}_1+\tilde{h}_2\), \(\tilde{h}\) is also \(a\)-H\"older continuous. Let \(M_2=\sup_{x\in\mathbb{T}}\left|\tilde{h}'_2\right|\). The deviations of \(\tilde{h}\) satisfy
\begin{align*}
    \left|S_n \tilde{h}(x)\right|
     & \le \left|S_n \tilde{h}_1(x)\right| + \left|S_n \tilde{h}_2(x)\right| \\
     & \ll n^{\delta} + 2M_2 \ll n^{\delta}.
\end{align*}
Hence, \(\tilde{h}\) satisfies the \((C,\delta)\)-deviation for some \(C > 0\). To obtain \(C_0, \delta\)-deviation for any prescribed \(C_0>0\), we just need to multiply by a small constant to \(\tilde{h}\), which doesn't affect other properties.

On the other hand, let \(\tilde{h}_{0,n}\) be the \(n\)-th Fourier coefficient of \(\tilde{h}\). Since the Fourier supports of \(\tilde{h}_1\) and \(\tilde{h}_2\) are disjoint. \(\tilde{h}_{0,m_j}=\tilde{h}_{2,m_j}\) for all \(m_j\) defined above. By the Construction of \(\tilde{h}_2\), \(\left|m_j\right|^{a+\epsilon} \left|\tilde{h}_{0,m_j}\right|\to\infty\) as \(j\to\infty\). Therefore, \(\tilde{h}\) is not \((a+\epsilon)\)-H\"older continuous for all \(\epsilon>0\).
Finally, consider \(\{n'_j\}\) and \(x_j\) above, we have
\begin{align*}
    \left|S_{n'_j} \tilde{h}(x_j)\right|
     & \gg \left|S_{n'_j} \tilde{h}_1(x_j)\right| - \left|S_{n'_j} \tilde{h}_2(x_j)\right| \\
     & \gg 2^{-j}\theta_{j}^{-\delta} - 2M_2                                               \\
     & \gg 2^{-j}\theta_{j}^{-\delta}.
\end{align*}

Then use the same argument above, when \(\delta>0\), \(\tilde{h}\) cannot satisfy the \((C', \delta')\)-deviation condition for every \(0\le\delta'<\delta\) and \(C'>0\).


\begin{bibdiv}
    \begin{biblist}

        \bib{AK70}{article}{
            author={Anosov, D.~V.},
            author={Katok, A.~B.},
            title={New examples in smooth ergodic theory: ergodic diffeomorphisms},
            date={1970},
            journal={Trans. Moscow Math. Soc.},
            volume={23},
            pages={1\ndash 35},
        }

        \bib{AvilaMixing2020}{article}{
            author={Avila, Artur},
            author={Fayad, Bassam},
            author={Le~Calvez, Patrice},
            author={Xu, Disheng},
            author={Zhang, Zhiyuan},
            title={On mixing diffeomorphisms of the disc},
            date={2020},
            ISSN={0020-9910, 1432-1297},
            journal={Invent. Math.},
            volume={220},
            number={3},
            pages={673\ndash 714},
            label={AFLXZ20},
        }

        \bib{besicovitch}{article}{
            author={Besicovitch, A.~S.},
            title={Sets of fractional dimensions (iv): On rational approximation to real numbers},
            date={1934},
            journal = {J. Lond. Math. Soc. (2)},
            volume={9},
            number={2},
            pages={126 \ndash  131},
        }

        \bib{BowenEntropy1971}{article}{
            author={Bowen, Rufus},
            title={Entropy for {{Group Endomorphisms}} and {{Homogeneous Spaces}}},
            date={1971},
            ISSN={0002-9947},
            journal={Trans. Amer. Math. Soc.},
            volume={153},
            pages={401\ndash 414},
        }

        \bib{BourgainDisjointness2013}{incollection}{
            author={Bourgain, J.},
            author={Sarnak, P.},
            author={Ziegler, T.},
            title={Disjointness of {{Moebius}} from {{Horocycle Flows}}},
            date={2013},
            booktitle={From {{Fourier Analysis}} and {{Number Theory}} to {{Radon Transforms}} and {{Geometry}}: {{In Memory}} of {{Leon Ehrenpreis}}},
            editor={Farkas, Hershel~M.},
            editor={Gunning, Robert~C.},
            editor={Knopp, Marvin~I.},
            editor={Taylor, B.~A.},
            publisher={Springer New York},
            address={New York, NY},
            pages={67\ndash 83},
        }

        \bib{CWZLacunary}{misc}{
            author={Chang, Yinshan},
            author={Wang, Jian},
            author={Zhou, Junchang},
            title={Regularity, quantitative deviation, and non-rigidity of a lacunary skew product},
            date={2026},
            number={arXiv:2608.25821},
            note={\href{https://arxiv.org/abs/2608.25821}{arXiv:2608.25821}},
        }

        \bib{CWZBMM}{misc}{
            author={Chang, Yinshan},
            author={Wang, Jian},
            author={Zhou, Junchang},
            title={Examples beyond bounded mean motion for quantitative rigidity on the two-torus},
            date={2026},
            number={arXiv:2608.25906},
            note={\href{https://arxiv.org/abs/2608.25906}{arXiv:2608.25906}},
        }



        \bib{DeFaveriMobius2022}{article}{
            author={De~Faveri, Alexandre},
            title={M\"obius {{Disjointness}} for \(C^{1+\varepsilon}\) {{Skew Products}}},
            date={2022},
            ISSN={1073-7928, 1687-0247},
            journal={Int. Math. Res. Not. IMRN},
            volume={2022},
            number={4},
            pages={2513\ndash 2531},
        }

        \bib{FranksGeneralizations1988}{article}{
            author={Franks, John},
            title={Generalizations of the {{Poincar\'e-Birkhoff Theorem}}},
            date={1988},
            ISSN={0003486X},
            journal={Ann. of Math. (2)},
            volume={128},
            number={1},
            pages={139-151},
            note={Erratum to: ``Generalizations of the Poincar\'e-Birkhoff
                    theorem'' [Ann. of Math. (2) 128 (1988), no. 1, 139-151], Ann. of Math. (2), {\bf 164} (2006), 1097-1098.}
        }

        \bib{FK04}{article}{
            author={Fayad, B.},
            author={Katok, A.},
            title={Constructions in elliptic dynamics},
            date={2004},
            journal={Ergodic Theory Dynam. Systems},
            volume={24},
            number={5},
            pages={1477\ndash 1520},
        }

        \bib{FayadWeak2005}{article}{
            author={Fayad, Bassam},
            author={Saprykina, Maria},
            title={Weak mixing disc and annulus diffeomorphisms with arbitrary {{Liouville}} rotation number on the boundary},
            date={2005},
            ISSN={0012-9593},
            journal={Ann. Sci. Éc. Norm. Supér.},
            volume={38},
            number={3},
            pages={339\ndash 364},
        }

        \bib{GlasnerRigidity1989}{article}{
            author={Glasner, S.},
            author={Maon, D.},
            title={Rigidity in topological dynamics},
            date={1989},
            ISSN={1469-4417, 0143-3857},
            journal={Ergodic Theory Dynam. Systems},
            volume={9},
            number={2},
            pages={309\ndash 320},
        }

        \bib{HuangAlmost2025}{misc}{
            author={Huang, Wen},
            author={Tan, Maoru},
            author={Xu, Leiye},
            title={Almost {{Countable Spectrum}} and {{Logarithmic Sarnak Conjecture}}},
            date={2025},
            number={arXiv:2511.04419},
            note={\href{https://arxiv.org/abs/2511.04419}{arXiv:2511.04419}},
        }

        \bib{HuangMeasure2019}{article}{
            author={Huang, Wen},
            author={Wang, Zhiren},
            author={Ye, Xiangdong},
            title={Measure complexity and {{M\"obius}} disjointness},
            date={2019},
            ISSN={00018708},
            journal={Adv. Math.},
            volume={347},
            pages={827\ndash 858},
        }

        \bib{JagerLinearization2009}{article}{
            author={J{\"a}ger, T.},
            title={Linearization of conservative toral homeomorphisms},
            date={2009},
            ISSN={0020-9910, 1432-1297},
            journal={Invent. Math.},
            volume={176},
            number={3},
            pages={601\ndash 616},
        }

        \bib{jarnik1929}{article}{
            author={Jarn\'ik, V.},
            title={Diophantische Approximationen und Hausdorffsches Mass },
            journal={Mat. Sb.},
            date={1929},
            volume={36},
            number={3},
            pages={371 \ndash  382},
        }

        \bib{khinchinContinued1997}{book}{
            author={Khinchin, A.~Ya},
            title={Continued fractions},
            publisher={Dover Publications},
            date={1997},
            ISBN={978-0-486-69630-0},
        }

        \bib{KanigowskiRigidity2021}{article}{
            author={Kanigowski, Adam},
            author={Lema{\'n}czyk, Mariusz},
            author={Radziwi{\l}{\l}, Maksym},
            title={Rigidity in dynamics and M\"obius disjointness},
            date={2021},
            ISSN={0016-2736, 1730-6329},
            journal={Fund. Math.},
            volume={255},
            number={3},
            pages={309\ndash 336},
        }

        \bib{Koc02}{article}{
            author={Kochergin, A.~V.},
            title={A mixing special flow over a circle rotation with almost Lipschitz function},
            journal={Sb. Math.},
            date={2002},
            volume={193},
            number={3},
            pages={359\ndash 385},
        }

        \bib{KuipersUniform1974}{book}{
            author={Kuipers, Lauwerens},
            author={Niederreiter, Harald},
            title={Uniform distribution of sequences},
            series={Pure and Applied Mathematics},
            publisher={Wiley},
            address={New York},
            date={1974},
            ISBN={978-0-471-51045-1},
        }

        \bib{LiuMobius2015}{article}{
            author={Liu, Jianya},
            author={Sarnak, Peter},
            title={The {{M\"obius}} function and distal flows},
            date={2015},
            journal={Duke Math. J.},
            volume={164},
            number={7},
            pages={1353\ndash 1399},
        }

        \bib{MisiurewiczRotation1989}{article}{
            author={Misiurewicz, Micha{\l}},
            author={Ziemian, Krystyna},
            title={Rotation {{Sets}} for {{Maps}} of {{Tori}}},
            date={1989},
            ISSN={00246107},
            journal={J. Lond. Math. Soc. (2)},
            volume={s2-40},
            number={3},
            pages={490\ndash 506},
        }

        \bib{SarnakThree2012}{unpublished}{
            author={Sarnak, Peter},
            title={Three {{Lectures}} on the {{Mobius Function Randomness}} and {{Dynamics}}},
            date={2012},
            note={\url{https://publications.ias.edu/node/512}}
        }

        \bib{WangMobius2017}{article}{
            author={Wang, Zhiren},
            title={M\"obius disjointness for analytic skew products},
            date={2017},
            ISSN={0020-9910, 1432-1297},
            journal={Invent. Math.},
            volume={209},
            number={1},
            pages={175\ndash 196},
        }

        \bib{WangRigidity2018}{article}{
            author={Wang, Jian},
            author={Zhang, Zhiyuan},
            title={The rigidity of pseudo-rotations on the two-torus and a question of {{Norton-Sullivan}}},
            date={2018},
            ISSN={1016-443X, 1420-8970},
            journal={Geom. Funct. Anal.},
            volume={28},
            number={5},
            pages={1487\ndash 1516},
        }

    \end{biblist}
\end{bibdiv}

\end{document}